\newcommand{\R} {\mathbb{R}}
\newtheorem{theorem}{Theorem}[section]
\newtheorem{proposition}[theorem]{Proposition}
\newtheorem{corollary}[theorem]{Corollary}
\newtheorem{lemma}[theorem]{Lemma}
\newtheorem{assumption}[theorem]{Assumption}
\newtheorem{definition}[theorem]{Definition}
\DeclareMathOperator\dist{dist}
\begin{document}

\title{Large Deviations of the Exit Measure through a Characteristic Boundary for a Poisson driven SDE.}
\author{
{Etienne Pardoux}\footnote{{Aix--Marseille Universit\'e, CNRS, Centrale Marseille, I2M, UMR 7373 13453
Marseille, France; etienne.pardoux@univ-amu.fr; brice.samegni-kepgnou@univ-amu.fr.}}
\and
{Brice Samegni-Kepgnou}\setcounter{footnote}{6}$^\ast$
}

\maketitle

\begin{abstract} 
Let $O$ the basin of attraction of the unique stable equilibrium of a dynamical system, which is the law of large numbers limit of a Poissonian SDE. We consider the law of the exit point from $O$ of that Poissonian SDE. We adapt the approach of M. Day (1990) for the same problem for a Brownian SDE. For that purpose, we will use the Large deviation for the Poissonian SDE reflected at the boundary of $O$ studied in our recent work Pardoux and Samegni (2018).   
\end{abstract}

\vskip 3mm
\noindent{\bf AMS subject classification: } 60F10, 60H10, 92C60.

\noindent{\bf Keywords: } Poisson process, Law of large numbers, Large deviation principle.
\vskip 3mm

\section{Introduction}

We consider a $d$-dimensional process of the type
\begin{equation}\label{EqPoisson1}
Z^{N}(t)=Z^{N, z}(t):= \frac{[Nz]}{N} +\frac{1}{N} \sum_{j=1}^{k} h_{j} P_{j}\Big(\int_{0}^{t} N \beta_{j}(Z^{N,z}(s)) ds \Big).
\end{equation}
Here $(P_{j})_{1\le j\le k}$ are i.i.d. standard Poisson processes. The $h_{j}\in \mathds{Z}^{d}$ denote the $k$ respective jump directions with respective jump rates $\beta_{j}(z)$ and $z\in A$ (where $A$ is the ``domain'' of the process). In fact \eqref{EqPoisson1} specifies a continuous time Markov chain with state space
\begin{equation}\label{statespace}
A^{(N)}=\Big\{z\in\mathbb{R}^d_+: (Nz_{1},\dots Nz_{d})\in\mathbb{Z}^{d}_{+},\ \sum_{i=1}^d z_i\le1\Big\}.
\end{equation}  

 We know from the law of large numbers of Kurtz \cite{Kurtz1978}, see also \cite{BP}, that under mild assumptions on the rates $\beta_j$, $1\leq j\leq k$, for all $T>0$, $Z^{N, z}(t)$ converges to $Y^{z}(t)$ almost surely  and uniformly over the interval $[0,T]$, where $Y^{z}(t)$ is the solution of the ODE
 \begin{equation}\label{ODE}
Y^{z}(t)  :=  Y(t):=z +\int_{0}^{t} b(Y^{z}(s)) ds
\end{equation}
with
\begin{equation}\label{defb}
b(y):=\sum_{j=1}^{k} \beta_{j}(y) h_{j},
\end{equation}
and $Y^z_t$ takes its values in the set
\begin{equation*}
A=\Big\{z\in\mathbb{R}^{d}_{+}, \sum_{i=1}^{d}z_{i}\leqslant1\Big\}.
\end{equation*} 

 We assume that the dynamical system \eqref{ODE} has possibly several equilibria which are locally stable and we consider the basin of attraction $O$ of a stable equilibrium $z^{\ast}$. For the models we have in mind (see the four examples in section \ref{sec:appl} below), $O$ has a characteristic boundary which is either the set $\{z\in A;\ z_1=0\}$, or else the part of the boundary of $O$ which is included in $\mathring{A}$, the interior of $A$. In both cases, that characteristic boundary is denoted $\widetilde{\partial O}$.  The solution of the dynamical system \eqref{ODE} starting from $z\in\widetilde{\partial O}$  remain in $\widetilde{\partial O}$ for all time, since for all $z\in\widetilde{\partial O}$
\begin{equation}\label{condboundcarac}
<b(z),n(z)>  := 0,
\end{equation}
where $n(z)$ is the unit outward normal to $\widetilde{\partial O}$ at $z$.

 Our goal in this paper is to study the most probable location of the exit point in $\widetilde{\partial O}$ of the trajectory of $Z^{N,z}$, for large $N$, when the starting point  $z$  belongs to the interior of $O$.  In other words, the aim is to study the probability that a trajectory of $Z^{N}$ exits $O$ in the neighborhood of a point $y\in\widetilde{\partial O}$, $\mathbb{P}(|Z^{N,z}(\tau^{N}_{O})-y|<\delta)$ for large $N$.  Here $\tau^{N,z}_{O}$ denotes the first time of exit of the process $Z^{N, z}(t)$ from $O$.  

 We adopt the approach of Day \cite{day1990large}. First we define a reflected Poissonian SDE for which the large deviation principle is satisfied, with the same rate function as the original one defined by \eqref{EqPoisson1}. We then follow the arguments of Day \cite{day1990large} to obtain our results.

 Our motivation comes from the epidemics models. More precisely, we want to understand how the extinction of an endemic situation happens in infectious disease models. 

 In section \ref{reflected}, we define our reflecting Poissonian SDE and we formulate a large deviation principle satisfied by the latter. Section \ref{lemmas} presents some preliminary lemmas about the rate function. These lemmas are mostly adapted versions of those in chapter 6 of \cite{freidlin2012random} and in Day \cite{day1990large}. Section \ref{exitposition} discusses the large deviations of the exit measure. Finally, in section \ref{sec:appl}, we consider four distinct epidemics models. In all those four cases, with the help of the Pontryaguin maximum principle, we show that for large $N$, the process $Z^N_t$ hits $\widetilde{\partial O}$ in the vicinity of one particular point of that boundary with a probability close to one. 

\section{Notation and the main assumption}
 We define the following cone generated by the family of vectors $(h_{j})_{j=1,...,k}$ 
\begin{equation}\label{Cspan}
\mathcal{C}=\Big\{y\in\mathbb{R}^{d}: y=\sum_{j=1}^{k}\mu^{j}h_{j}, \mu^{j}\geq0\quad\forall j\Big\}.
\end{equation}
We remark that in all the epidemics models that we will consider, the family of vectors $(h_{j})_{j=1,...,k}$ is such that
$\mathcal{C}=\mathbb{R}^{d}$.

 We now formulate some assumptions which are useful in order to establish the large deviation principle of the reflected Poisson SDE that we will construct in section~\ref{reflected}, and which we will assume o hold throughout this paper, without recalling them in the statements. 
\begin{assumption}\label{assump2}
\begin{enumerate}
\item $\mathcal{C}=\mathbb{R}^{d}$.\label{assump20}
\item $\bar{O}$ is compact and there exists a point $z_{0}$ in the interior of $O$ such that each segment joining $z_{0}$ and any $z\in\partial O$ does not touch any other point of the boundary $\partial O$.  \label{assump21}
\item For each $a>0$ small enough, $z\in\bar{O}$, if we denote $z^{a}=z+a(z_{0}-z)$, there exist two positive constants $c_{1}$ and $c_{2}$ such that  \label{assump22}
 \begin{align*}
   |z-z^{a}| &\leq c_{1} a  \\
   \dist(z^{a},\partial O) &\geq c_{2} a  
\end{align*}
 \item The rate functions $\beta_{j}$ are Lipschitz continuous with the Lipschitz constant equal to $C$.  \label{assump23}
 \item There exist two constants $\lambda_{1}$ and $\lambda_{2}$ such that whenever $z\in \bar{O}$ is such that $\beta_{j}(z)<\lambda_{1}$, $\beta_{j}(z^{a})>\beta_{j}(z)$ for all $a\in]0, \lambda_{2}[$ . \label{assump24}
 \item There exist  $\nu\in]0,1/2[$ and $a_0>0$  such that 
 $C_{a}\geq\exp\{-a^{-\nu}\}$ for all $0<a<a_0$,
 where
 \begin{equation*}
 C_{a}=\inf_{j}\inf_{z:\dist(z,\partial O)\geq a}\beta_{j}(z).
 \end{equation*}
 \label{assump25}
 \end{enumerate}
\end{assumption}

\section{Reflected solution of a Poisson driven SDE and Large deviation Principle($LDP$)}\label{reflected}

 For any $z\in\bar{O}$, let
\begin{equation}\label{initialcond}
z^{N}=
\begin{cases}
      \frac{[N z]}{N}& \text{ if } \frac{[N z]}{N}\in\bar{O}, \\
      \underset{y\in\bar{O}^{(N)}}{\arg\inf}~~|y-z|& \text{otherwise},
\end{cases}
\end{equation}
and $\tilde{Z}^{N}_{t}$ denote the d-dimensional processes  defined by
\begin{align}\label{reflexion}
\tilde{Z}^{N}(t)=\tilde{Z}^{N, z}(t)&:=z^{N}+\frac{1}{N}\sum_{j=1}^{k}h_{j}Q_{t}^{N,j}-\frac{1}{N}\sum_{j=1}^{k}h_{j}\int_{0}^{t}\mathbf{1}_{\{\tilde{Z}^{N,z}(s-)+\frac{h_{j}}{N}\not\in\bar{O}\}}dQ_{s}^{N,j} \nonumber \\
&:=z^{N}+\frac{1}{N}\sum_{j=1}^{k}h_{j}\int_{0}^{t}\mathbf{1}_{\{\tilde{Z}^{N,z}(s-)+\frac{h_{j}}{N}\in\bar{O}\}}dQ_{s}^{N,j} , 
\end{align}
where for $j=1,...,k$,  $Q_{.}^{N,j}$ is given as
\begin{equation}\label{Qaux}
Q_{t}^{N,j}=P_{j}\Big(N\int_{0}^{t} \beta_{j}(\tilde{Z}^{N,z}(s)) ds \Big).
\end{equation}
We then obtain a Poisson driven SDE whose solution takes its values in $O^{(N)}=A^{(N)}\cap\bar{O}$.

 Let $D_{T,\bar{O}}$ denote the set of functions from $[0,T]$ into $\bar{O}$ which are right continuous and have left limits, $\mathcal{A}\mathcal{C}_{T,\bar{O}}\subset D_{T, \bar{O}}$ the subspace of absolutely continuous functions. For any $\phi$, $\psi\in D_{T,\bar{O}}$ and $W$ a subset of $D_{T,\bar{O}}$ let
\begin{equation*}
  \|\phi-\psi\|_{T}=\sup_{t\leq T}|\phi_{t}-\psi_{t}|
\end{equation*}
where $|.|$ denotes Euclidian distance in $\mathbb{R}^{d}$ and
\begin{equation}\label{moduleinf}
\rho_{T}(\phi,W)=\inf_{\varphi\in W} \|\phi-\varphi\|_{T}.
\end{equation}

 For all $\phi\in\mathcal{AC}_{T,\bar{O}}$, let $\mathcal{A}_{d}(\phi)$ denote the (possibly empty) set of vector-valued Borel measurable functions $\mu$ such that for all $j=1,...,k$, $0\le t\le T$,
 $\mu^{j}_{t}\geq0$ and 
\begin{equation*}\label{allowed}
  \frac{d\phi_{t}}{dt}=\sum_{j=1}^k \mu^{j}_{t} h_j, \quad\text{t a.e}.
\end{equation*}
We define the rate function
\begin{equation}\label{ratefunct}
I_{T}(\phi) :=
 \begin{cases}
\underset{\mu\in\mathcal{A}_{d}(\phi)}{\inf} I_{T}(\phi|\mu),& \text{ if } \phi\in\mathcal{AC}_{T,\bar{O}}; \\
\infty ,& \text{ else,}
\end{cases}
\end{equation}
where
\begin{equation*}
  I_{T}(\phi|\mu)=\int_{0}^{T}\sum_{j=1}^{k}f(\mu_{t}^{j},\beta_{j}(\phi_{t}))dt
\end{equation*}
with $f(\nu,\omega)=\nu\log(\nu/\omega)-\nu+\omega$.
 We assume in the definition of $f(\nu,\omega)$ that for all $\nu>0$, $\log(\nu/0)=\infty$ and $0\log(0/0)=0\log(0)=0$. By the definition of $f$, there is not difficult to remark  that
\begin{equation} 
I_{T}(\phi)=0 \quad \text{if and only if $\phi$ solves the $ODE$~\eqref{ODE}}.
\end{equation}
 
 The above rate function can also be defined as
 \begin{equation*}
I_{T}(\phi) :=
 \begin{cases}
\int_{0}^{T}L(\phi_{t},\phi'_{t})dt& \text{ if } \phi\in\mathcal{AC}_{T, \bar{O}} \\
\infty & \text{ else.}
\end{cases}
\end{equation*}
where for all $z\in \bar{O}$, $y\in\mathbb{R}^{d}$ 
\begin{equation}\label{functionnalL}
L(z,y)=\sup_{\theta\in\mathbb{R}^{d}}\ell(z,y,\theta)
\end{equation}
with for all $z\in \bar{O}$, $y\in\mathbb{R}^{d}$ and $\theta\in\mathbb{R}^{d}$
\begin{equation*}
\ell(z,y,\theta)=\big<\theta,y\big>-\sum_{j=1}^{k}\beta_{j}(z)(e^{\big<\theta,h_{j}\big>}-1)
\end{equation*}
 
 The rate function defined above is a good rate function (cf \cite{Kratz2014}), that is for all $s>0$, the set $\{\phi\in D_{T,\bar O}:I_{T}(\phi)\leq s\}$ is compact. We now formulate the result concerning the $LDP$ for our reflected model \eqref{reflexion}. This result is proved in \cite{Pard2017}.\begin{theorem}\label{LDP}
Let $\{\tilde{Z}^{N,z}_t,\ 0\le t\le T\}$ be the solution of \eqref{reflexion}.
\begin{description}
  \item[a)] For $z\in \bar{O}$, $\phi\in D_{T,\bar{O}}$, $\phi_{0}=z$, $\eta>0$ and $\delta>0$ there exists $N_{\eta,\delta}\in\mathbb{N}$ such that for all $N>N_{\eta,\delta}$
  \begin{equation*}
  \mathbb{P}_{z}\Big(\|\tilde{Z}^{N}-\phi\|_{T}<\delta\Big)\geq\exp\{-N(I_{T}(\phi)+\eta)\}.
  \end{equation*}
  \item[b)] For any open subset $G$ of $D_{T,\bar{O}}$, the following holds uniformly over $z\in\bar{O}$
  \begin{equation*}
  \liminf_{N\to\infty}\frac{1}{N}\log\mathbb{P}_{z}(\tilde{Z}^{N}\in G)\geq-\inf_{\phi\in G, \phi_{0}=z} I_{T}(\phi).
  \end{equation*}
  \item[c)] For $z\in\bar{O}$, $\delta>0$ let
  \begin{equation*}
   H_{\delta}(s)=\{\phi\in D_{T,\bar{O}}: ,\phi_{0}=z, \rho_{T}(\phi,\Phi(s))\geq\delta\}\quad\text{where}\quad \Phi(s)=\{\psi\in D_{T,\bar{O}}: I_{T}(\psi)\leq s\}.
  \end{equation*}
  For any $\delta, \eta, s>0$ there exists $N_{0}\in\mathbb{N}$ such that for all $N>N_{0}$
  \begin{equation*}
  \mathbb{P}_{z}(\tilde{Z}^{N}\in H_{\delta}(s))\leq\exp\{-N(s-\eta)\}.
  \end{equation*}
  \item[d)] For any closed subset $F$ of $D_{T,\bar{O}}$, the following holds uniformly over $z\in\bar{O}$
  \begin{equation*}
  \limsup_{N\to\infty}\frac{1}{N}\log\mathbb{P}_{z}(\tilde{Z}^{N}\in F)\leq-\inf_{\phi\in F, \phi_{0}=z} I_{T}(\phi).
  \end{equation*}
\end{description}
\end{theorem}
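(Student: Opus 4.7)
The plan is to pair the four parts as a lower-bound half, (a) and (b), and an upper-bound half, (c) and (d). Parts (b) and (d) are soft consequences of (a) and (c): for (b), exhibit for each $\phi\in G$ with $I_T(\phi)<\infty$ a sup-norm tube inside $G$ and apply (a); for (d), cover the complement of an appropriate level set $\Phi(s)$ by finitely many sup-norm balls (using that $\Phi(s)$ is compact, as $I_T$ is a good rate function) and apply (c). So the real work lies in (a) and (c).

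For part (a), I would use the standard exponential change of measure on the driving Poisson processes. Given $\phi$ with $\phi_0=z$, pick $\mu\in\mathcal{A}_d(\phi)$ with $I_T(\phi|\mu)\le I_T(\phi)+\eta/2$, and construct a new probability $\mathbb{Q}^N$ under which the $j$-th jump counter in \eqref{reflexion} has $\mathcal{F}_t$-intensity $N\mu^j_t$ in place of $N\beta_j(\tilde Z^N_t)$. The Radon--Nikodym derivative is an exponential of an integrated $f$-entropy, and a law of large numbers for the thinned time-changed Poisson counters gives $\tilde Z^N\to\phi$ uniformly in $\mathbb{Q}^N$-probability. On the event $\{\|\tilde Z^N-\phi\|_T<\delta\}$ the Lipschitz hypothesis lets us replace $\beta_j(\tilde Z^N)$ by $\beta_j(\phi)$ in the derivative with an error of order $\delta$, producing the announced bound $e^{-N(I_T(\phi)+\eta)}$. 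The delicate point is to ensure the reflection term in \eqref{reflexion} is inactive; to that end one first approximates $\phi$ by its radial contraction $\phi^a_t=\phi_t+a(z_0-\phi_t)$, which by Assumption \ref{assump2} stays at distance $\ge c_2 a$ from $\partial O$, so for $N\gg 1/a$ the reflection indicators are identically $1$ along trajectories close to $\phi^a$, and a further argument based on items \eqref{assump23}--\eqref{assump24} of Assumption \ref{assump2} gives $I_T(\phi^a)\to I_T(\phi)$ as $a\to 0$.

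For part (c), the tool is the family of exponential martingales
\[
M^{N,\theta}_t=\exp\Big\{N\big<\theta,\tilde Z^N_t-z^N\big>-N\int_0^t\sum_j\beta_j(\tilde Z^N_s)\bigl(e^{\big<\theta,h_j\big>}-1\bigr)\mathbf{1}_{\{\tilde Z^N_{s-}+h_j/N\in\bar O\}}ds\Big\},
\]
which are mean-one $\mathcal{F}_t$-martingales. Discretising $[0,T]$ on a mesh $\Delta$, applying Chebyshev with $\theta$ optimised on each sub-interval, and letting $\Delta\to 0$ produces the Legendre dual $L(\phi_t,\phi'_t)$ of \eqref{functionnalL} as integrand and hence the desired exponential bound $\exp\{-N(s-\eta)\}$ for the complement of $\Phi(s)$, uniformly in $z$ by compactness of $\bar O$.

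The main obstacle is that the reflection indicators in $M^{N,\theta}$ truncate the Legendre duality, so one must verify that jumps killed at the boundary do not falsify the upper bound. This is exactly where Assumption \ref{assump2}\eqref{assump25} enters: the lower bound $C_a\ge e^{-a^{-\nu}}$ with $\nu<1/2$ ensures, through an exponential estimate on the time the process spends in the $a$-neighbourhood of $\partial O$, that the boundary correction is negligible against $e^{-Ns}$ for any fixed $s$. This interaction between reflection and rate function is the novelty relative to the Brownian treatment of Day and constitutes the technical core of \cite{Pard2017}, whose detailed arguments we would invoke to turn the above outline into a rigorous proof.
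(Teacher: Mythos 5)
The paper does not actually prove Theorem~\ref{LDP}: it states that ``This result is proved in \cite{Pard2017}'' and cites that reference without reproducing any argument. Your proposal is therefore not comparable to a proof \emph{in this paper}, but judged on its own terms it is a reasonable sketch of the expected Freidlin--Wentzell-type argument for a Poisson-driven jump system: tilting the jump intensities to $N\mu^j_t$ and using the Radon--Nikodym exponential to get the pointwise lower bound (a), deducing (b) by inserting a sup-norm tube inside $G$; exponential martingales with Chebyshev and a mesh refinement for the uniform upper bound (c), and deducing (d) from compactness of the level sets $\Phi(s)$. You also correctly single out the genuinely new difficulty relative to the unreflected case --- the reflection indicators truncate both the Girsanov exponent and the exponential martingale --- and you correctly identify Assumption~\ref{assump2}.\ref{assump25} together with the radial contraction of Assumption~\ref{assump2}.\ref{assump22}--\ref{assump2}.\ref{assump24} as the tools to neutralize it, which matches the spirit of what \cite{Pard2017} does. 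Since you explicitly defer the detailed boundary estimates to \cite{Pard2017}, exactly as the paper does, there is no gap to report against the paper's treatment. One cautionary remark on part (c): the Chebyshev bound obtained from the truncated martingale dominates $L(z,\cdot)$ only for states at a positive distance from $\partial O$, so the bound $C_a\ge e^{-a^{-\nu}}$ alone is not enough --- one also needs an a priori estimate showing that trajectories realizing the event $\{\rho_T(\tilde Z^N,\Phi(s))\ge\delta\}$ cannot spend too long within distance $a$ of $\partial O$ without paying cost at least $s$. That occupation-time estimate is the technical core that your outline implicitly delegates to \cite{Pard2017}, and an honest write-up would need to state it explicitly.
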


\section{Notations and important Lemmas}\label{lemmas}

We assume from now on that there exists a (unique) point $z^\ast\in O$ such that for any $z\in O$,
$Y^z_t\to z^\ast$, as $t\to\infty$.

 For $z, y \in \bar{O}$, we define the following functionals.
\begin{align}
V_{\bar{O}}(z,y,T)&:= \inf_{\phi\in D_{T,\bar{O}}, \phi_{0}=z, \phi_{T}=y} I_{T}(\phi) \notag \\
V_{\bar{O}}(z,y)&:= \inf_{T>0} V_{\bar{O}}(z,y,T)  \notag \\
 V_{\widetilde{\partial O}}&:= \inf_{y \in \widetilde{\partial O}} V_{\bar{O}}(z^{*},y). \notag
\end{align}
We will denote by $B_{r}(y)$ the open ball centered at $y$ with radius $r$, and  
$B_{r}(K)=\cup_{y\in K}B_r(y)$.
For large $N$, the function $V_{\bar{O}}(z,y)$ quantifies the energy needed for a trajectory to deviate from a solution to the ODE \eqref{ODE},  and go from $z$ to $y$, without leaving $\bar{O}=O\cup 
\widetilde{\partial O}$ and $V_{\widetilde{\partial O}}$ is the minimal energy required to leave the domain $O$ when starting from $z^{*}$.
We now prove a few Lemmas, which are analogues of some Lemmas of chapter 6 in \cite{freidlin2012random}.

\begin{lemma}\label{assumption1}
There exists a constant $C>0$ and a function $\mathcal{K}\in C(\R_+,\R_+)$ with $\mathcal{K}(0)=0$ such that for all $\rho>0$ small enough, there exists a constant $T(\rho)$ with $T(\rho)\le C\rho$ such that for all $x\in\mathring{A}\cap \bar{O}$ and all $z$, $y\in B_{\rho}(x)\cap\mathring{O}$ there exists an curve $(\phi_{t})=(\phi_{t}(\rho,z,y))$ $0\leq t\leq T(\rho)$ with $\phi_{0}=z$, $\phi_{T(\rho)}=y$ entirely in $B_{\rho}(x)\cap\bar{O}$, such that $I_{T(\rho)}(\phi)\le\mathcal{K} (T(\rho))$.
\end{lemma}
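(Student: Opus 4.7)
The plan is to construct $\phi$ as a nearly straight-line path on $[0,T(\rho)]$ with $T(\rho):=C\rho$, slightly perturbed inward to stay in $\bar O$ away from $\partial O$, and then to control its cost via the quantitative rate bound of Assumption~\ref{assump2}(6). Since $\mathcal{C}=\mathbb{R}^d$ by Assumption~\ref{assump2}(1), a Carath\'eodory-type argument yields a constant $K_0>0$ (depending only on $(h_1,\ldots,h_k)$) such that every $v\in\mathbb{R}^d$ admits a decomposition $v=\sum_{j=1}^k\mu^j h_j$ with $\mu^j\geq 0$ and $\sum_j\mu^j\leq K_0|v|$. Applied piecewise to $\phi'_t$ on the candidate path, this produces a piecewise constant control $\mu\in\mathcal{A}_d(\phi)$ with $\sum_j\mu^j_t\leq 2K_0/C$, reducing the task to bounding $I_{T(\rho)}(\phi|\mu)$.

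For the curve itself: when $x\in\mathring O$ is at distance $\geq 2\rho$ from $\widetilde{\partial O}$, I take $\phi_t:=z+(t/T)(y-z)$, which stays in $B_\rho(x)\subset\mathring O$ by convexity of the ball. In the remaining case, where $x$ lies on or near $\widetilde{\partial O}$, I deform inward toward the reference point $z_0$ of Assumption~\ref{assump2}(2): working (up to relabeling) with inputs in $B_{\rho/2}(x)$ so as to absorb the displacement, I connect $z$ to $y$ through the interior-shifted points $z^a:=z+a(z_0-z)$ and $y^a:=y+a(z_0-y)$ with $a$ of order $\rho$, using the star-shapedness of $\bar O$ from $z_0$ (Assumption~\ref{assump2}(2)) to ensure the resulting arc lies in $\bar O$ at distance $\geq c_2 a$ from $\partial O$ (Assumption~\ref{assump2}(3)).

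For the cost estimate: Assumption~\ref{assump2}(6) yields $\beta_j(\phi_t)\geq\exp(-(c_2 a)^{-\nu})$ along the path, while Assumption~\ref{assump2}(4) gives an absolute upper bound $M:=\sup_{j,z\in\bar O}\beta_j(z)<\infty$. With $\mu^j$ piecewise constant and uniformly bounded, each integrand
\[
f(\mu^j,\beta_j(\phi_t))=\mu^j\log\bigl(\mu^j/\beta_j(\phi_t)\bigr)-\mu^j+\beta_j(\phi_t)
\]
is dominated by $A+B(c_2 a)^{-\nu}$ for absolute constants $A,B$. Integrating over $[0,T]=[0,C\rho]$ with $a\asymp\rho$ gives $I_{T(\rho)}(\phi)\leq c_4\rho+c_5\rho^{1-\nu}$. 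Since $\nu<1/2<1$, both terms vanish as $\rho\to 0$, and setting $\mathcal{K}(T):=(c_4/C)T+(c_5/C^{1-\nu})T^{1-\nu}$ defines a continuous function on $\R_+$ with $\mathcal{K}(0)=0$ that realizes the required bound.

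The main obstacle is the second case of the path construction, where one must simultaneously keep $\phi$ inside the small ball $B_\rho(x)$ and at distance $\gtrsim\rho$ from $\partial O$. This balance is tightest when $x$ lies on the characteristic boundary $\widetilde{\partial O}$, and it is exactly the geometric hypotheses~\ref{assump2}(2)--(3) on the existence of a reference interior point $z_0$ with a quantitative retraction estimate that make it possible; Assumption~\ref{assump2}(6) with $\nu<1/2$ then guarantees that the induced rate lower bound is strong enough for $\mathcal{K}(T)\to 0$ as $T\to 0$.
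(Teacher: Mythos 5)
Your path construction (the inward shift $z\mapsto z^a$, $y\mapsto y^a$ via Assumption~\ref{assump2}.\ref{assump22}, the cone assumption to produce an admissible control, and Assumption~\ref{assump2}.\ref{assump25} to lower-bound the rates) is the same as the paper's, and the final estimate $I_{T(\rho)}(\phi)\lesssim\rho^{1-\nu}$ is correct. But one intermediate step is wrong as stated: you claim $\beta_j(\phi_t)\geq\exp\{-(c_2a)^{-\nu}\}$ ``along the path,'' and then bound the integrand uniformly by $A+B(c_2a)^{-\nu}$. That lower bound on $\beta_j$ only holds on the middle leg from $z^a$ to $y^a$. On the two end legs, from $z$ to $z^a$ and from $y^a$ to $y$, the path passes through points whose distance to $\partial O$ is arbitrarily small: the hypothesis $z,y\in\mathring{O}$ gives no quantitative lower bound on $\dist(z,\partial O)$, so near $t=0$ (and near $t=T$) the quantity $\beta_j(\phi_t)$ can be as small as one likes, and the integrand $\mu^j\log(\mu^j/\beta_j(\phi_t))$ is not uniformly bounded by $A+B(c_2a)^{-\nu}$.

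The fix, which is what the paper does, is to observe that along the leg $z\to z^a$ one has $\phi_t=z^{a'}$ with $a'$ proportional to $t$, so by Assumption~\ref{assump2}.\ref{assump22} the distance to $\partial O$ grows linearly in $t$ and hence $\beta_j(\phi_t)\geq\exp\{-\mathrm{const}\cdot t^{-\nu}\}$ by Assumption~\ref{assump2}.\ref{assump25}. The integrand is then $\lesssim t^{-\nu}$, which has an integrable singularity at $t=0$:
\[
\int_0^{\bar c\rho}\frac{dt}{t^{\nu}}=\frac{(\bar c\rho)^{1-\nu}}{1-\nu},
\]
still of order $\rho^{1-\nu}$. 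With this refinement on both end legs, your $\mathcal{K}(T)\asymp T+T^{1-\nu}$ goes through unchanged; without it, the claimed uniform domination of the integrand is false.
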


\begin{proof}
We will exploit Assumptions \ref{assump2}.\ref{assump22} and \ref{assump2}.\ref{assump25} and refer to the notations there. 
Note that the distance between $y$ and $z$ is at most $2\rho$. Let $y^a$ and $z^a$ be the points defined in Assumption \ref{assump2}.\ref{assump22}, with $a=2\rho/c_2$. Then both are at distance at least $2\rho$ from the boundary of $O$, while they are at distance less than $2\rho$ one from another. Consequently the segment joining those two points is at distance at least $\sqrt{3}\rho$ from the boundary. We choose as function $\phi$ the piecewise linear function which moves at speed one, first in straight line from $z$ to $z^a$, then from $z^a$ to $y^a$, and finally from $y^a$ to $y$. The time needed to do so is bounded by 
$2\left(1+\frac{c_1}{c_2}\right)\rho$. Thanks to Assumption \ref{assump2}.\ref{assump20}, $I_{T(\rho)}(\phi)<\infty$. 
Refering to the formula \eqref{ratefunct} for the rate function and to Assumption  \ref{assump2}.\ref{assump25}, we see that the  contribution of the straight line between $z^a$ and $y^a$ to $I_{T(\rho)}(\phi)$ is bounded by $C\rho^{1-\nu}$, while the contribution of the two other pieces is bounded by a universal constant times (below $\bar c=2\frac{c_1}{c_2}$)
\[\int_0^{\bar c\rho}\frac{dt}{t^\nu}=\frac{\bar c^{1-\nu}}{1-\nu}\rho^{1-\nu}.\]
The result follows.
\end{proof}

\begin{lemma}\label{lemma1.2}
$\forall\eta>0$, $K\subseteq \mathring{O}\cup\widetilde{\partial O}$ compact, there exist $T_{0}$ such that for any $z$, $y\in K$ 
there exists a function $\phi_{t}$, $0\leq t\leq T$ satisfying $\phi_{0}=z$, $\phi_{T}=y$, $T\leq T_{0}$ such that $I_{T}(\phi)\leq V_{\bar{O}}(z,y)+\eta$.
\end{lemma}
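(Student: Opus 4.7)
The strategy is to combine Lemma \ref{assumption1} (as a local connector) with a compactness/covering argument on $K$, so that we reduce the control of $V_{\bar O}(z,y)+\eta$ for arbitrary $(z,y)\in K^{2}$ to a fixed near-optimal path between one of finitely many reference pairs. A preliminary ingredient I would establish first is the sub-additivity
\[
V_{\bar O}(a,c)\le V_{\bar O}(a,b)+V_{\bar O}(b,c)\qquad\forall\,a,b,c\in\bar O,
\]
which is immediate from concatenation of paths and the additivity of the rate functional $I_{T}$ over time intervals.

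Fix $\eta>0$. By Lemma \ref{assumption1}, I can pick $\rho>0$ small enough that $\mathcal{K}(T(\rho))\le\eta/6$; then any $x\in \mathring A\cap\bar O$ and any $z\in B_\rho(x)\cap\mathring O$ satisfy $V_{\bar O}(x,z)\le\eta/6$ (and similarly $V_{\bar O}(z,x)\le\eta/6$), witnessed by a path of time $\le T(\rho)\le C\rho$. Next I cover the compact $K$ by finitely many balls $B_\rho(x_1),\dots,B_\rho(x_n)$ with $x_i\in K$. For each ordered pair $(i,j)$ I choose, by the definition of $V_{\bar O}$, a path $\psi^{ij}:[0,T_{ij}]\to\bar O$ with $\psi^{ij}_0=x_i$, $\psi^{ij}_{T_{ij}}=x_j$ and $I_{T_{ij}}(\psi^{ij})\le V_{\bar O}(x_i,x_j)+\eta/6$. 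Set $T_0:=\max_{i,j}T_{ij}+2T(\rho)$.

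Given $z,y\in K$, select indices $i,j$ with $z\in B_\rho(x_i)$ and $y\in B_\rho(x_j)$, and concatenate the three pieces: a path from $z$ to $x_i$ furnished by Lemma \ref{assumption1}, the chosen $\psi^{ij}$, then a path from $x_j$ to $y$ again from Lemma \ref{assumption1}. The resulting path $\phi$ is defined on $[0,T]$ with $T\le T_0$, satisfies $\phi_0=z$, $\phi_T=y$, and
\[
I_T(\phi)\le \tfrac{\eta}{6}+V_{\bar O}(x_i,x_j)+\tfrac{\eta}{6}+\tfrac{\eta}{6}.
\]
Applying sub-additivity twice gives $V_{\bar O}(x_i,x_j)\le V_{\bar O}(x_i,z)+V_{\bar O}(z,y)+V_{\bar O}(y,x_j)\le V_{\bar O}(z,y)+\eta/3$, so $I_T(\phi)\le V_{\bar O}(z,y)+5\eta/6<V_{\bar O}(z,y)+\eta$, which is the desired bound.

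\textbf{Main obstacle.} Lemma \ref{assumption1} is stated for endpoints $z,y\in B_\rho(x)\cap\mathring O$, whereas $K$ is allowed to touch the characteristic boundary $\widetilde{\partial O}$. The delicate case is therefore when $z$ or $y$ lies in $\widetilde{\partial O}$: I first push such a point to $z^{a}=z+a(z_0-z)\in\mathring O$ via Assumption \ref{assump2}.\ref{assump22} (taking $a$ of order $\rho$ so $\mathrm{dist}(z^{a},\partial O)\ge c_2 a$), then apply Lemma \ref{assumption1} between $z^{a}$ and $x_i$. The cost of the short segment $[z,z^{a}]$ is controlled exactly as in the proof of Lemma \ref{assumption1}, using Assumption \ref{assump2}.\ref{assump25} to bound the rates from below by $\exp(-a^{-\nu})$, yielding an extra contribution of order $\rho^{1-\nu}$ which is absorbed into the $\eta/6$ margin by choosing $\rho$ smaller if necessary. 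A further minor point to verify is that $V_{\bar O}(x_i,x_j)<\infty$ for every $(i,j)$, which is ensured by Assumption \ref{assump2}.\ref{assump20} ($\mathcal C=\mathbb R^{d}$) combined with the compactness of $\bar O$ and the construction used in Lemma \ref{assumption1}.
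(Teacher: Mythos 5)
Your proof is correct and is essentially the same covering-and-concatenation argument as in the paper: cover $K$ by finitely many balls, fix near-optimal reference paths between the ball centers, and splice in short connectors from Lemma \ref{assumption1} to reach arbitrary $z,y\in K$. Where the paper invokes continuity of $V_{\bar O}$ to get $V_{\bar O}(z_i,z_j)\le V_{\bar O}(z,y)+\eta/8$, you instead derive the analogous estimate from sub-additivity of $V_{\bar O}$ combined with the local cost bound of Lemma \ref{assumption1}, which is precisely the mechanism underlying that continuity, so your version is more self-contained; you also explicitly flag and patch the case $z$ or $y$ (or a center) on $\widetilde{\partial O}$, which the paper passes over even though Lemma \ref{assumption1} as stated requires its endpoints to lie in $\mathring O$.
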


\begin{proof}
As $K$ is compact  there exists a finite number $M$ of points $\{z_{i}, 1\leq i\leq M\}$ in $K$ such that 
\begin{equation*}
  K\subseteq\bigcup_{i=1}^{M}B_{r}(z_{i}).
\end{equation*}
 For all $z,y\in K$, there exist $1\leq i,j\leq M$ with $z\in B_{r}(z_{i})$, $y\in B_{r}(z_{j})$. Since
 $V_{\bar{O}}(u,v)$ is continuous, chosing  $r$ small enough, we deduce that
\begin{equation}\label{elemma1.2}
  V_{\bar{O}}(z_{i},z_{j})\leq V_{\bar{O}}(z,y)+\frac{\eta}{8}.
\end{equation}
Moreover, from the finiteness of $V_{\bar{O}}$ we have that for all $z_{i}, z_{j}$, there exists $T^{i,j}$ and $\widetilde{\phi}_{t}$ with $\widetilde{\phi}_{0}=z_{i}$, $\widetilde{\phi}_{T^{i,j}}=z_{j}$
\begin{equation*}
  I_{T^{i,j}}(\widetilde{\phi})\leq V_{\bar{O}}(z_{i},z_{j})+\frac{\eta}{4}.
\end{equation*}
We can fix $T_{0}=\underset{i,j}{\max}T^{i,j}+2$ and Lemma \ref{assumption1} tells us that it is always possible to connect $z$ and $x_{i}$ resp ($z_{j}$ and $y$) with $\phi^{i}_{t}$, $0\leq t\leq T^{i}<1$ resp ($\phi^{j}_{t}$, $0\leq t\leq T^{j}<1$) such that $\phi^{i}_{0}=z$, $\phi^{i}_{T^{i}}=z_{i}$ resp ($\phi^{j}_{0}=z_{j}$, $\phi^{j}_{T^{j}}=y$) and 
\begin{equation*}
I_{T^{i}}(\phi^{i})\leq\frac{\eta}{4} \text{ resp }\Big(I_{T^{j}}(\phi^{j})\leq\frac{\eta}{4}\Big).
\end{equation*}
Concatenating $\phi^i$, $\widetilde{\phi}$ and $\phi^j$, we obtain a trajectory $\phi$ with all the required properties.
\end{proof}

 Now, we define the equivalence relation $"\mathcal{R}"$ in $\bar{O}$ by
\begin{equation*}
z\mathcal{R} y\quad\text{iff}\quad V_{\bar{O}}(z,y)=V_{\bar{O}}(y,z)=0.
\end{equation*}

\begin{lemma}\label{lemequiva}
Let $z\mathcal{R} y$, $z\neq y$. The trajectory $Y^{z}(t)$ of the dynamical system \eqref{ODE} starting from $z$ lies in the set of points $\{v\in\bar{O}: v\mathcal{R} z\}$.
\end{lemma}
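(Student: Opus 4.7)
The plan is to verify the two identities $V_{\bar O}(z, Y^z(t)) = 0$ and $V_{\bar O}(Y^z(t), z) = 0$ for every $t \geq 0$. The first is immediate: restricting $s \mapsto Y^z(s)$ to $[0, t]$ yields an absolutely continuous path from $z$ to $Y^z(t)$ solving the ODE \eqref{ODE}, which therefore has $I_t$-rate zero by \eqref{ratefunct}. It remains to establish the second identity, which is the substantive content.

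First I would record the triangle inequality
\[
V_{\bar O}(a, c) \leq V_{\bar O}(a, b) + V_{\bar O}(b, c),
\]
obtained by concatenating near-optimal paths (with an application of Lemma~\ref{lemma1.2} to glue the endpoints continuously if required). Applied to the triples $(Y^z(t), y, z)$ and $(Y^z(t), z, y)$ together with the hypothesis $V_{\bar O}(z, y) = V_{\bar O}(y, z) = 0$, this gives $V_{\bar O}(Y^z(t), z) = V_{\bar O}(Y^z(t), y) =: \alpha$, and the remaining task is to show $\alpha = 0$.

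The main step, and the main obstacle, is a compactness/approximation construction. For every $\eta > 0$, by chaining cheap $z \to y$ and $y \to z$ legs supplied by the hypothesis, I can produce a path $\phi$ from $z$ to $y$ of duration $T \geq t$ with $I_T(\phi) < \eta$ (padding the duration is harmless because each additional leg contributes at most $\eta$ to the cost). Since $\{\psi \in D_{t, \bar O} : I_t(\psi) \leq \eta,\ \psi_0 = z\}$ is compact by the good rate function property of Theorem~\ref{LDP}, and the only zero-rate trajectory starting at $z$ is $Y^z$, letting $\eta \to 0$ forces $\phi(t)$ to lie in an arbitrarily small neighborhood of $Y^z(t)$. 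Provided $Y^z(t) \in \mathring A \cap \bar O$, Lemma~\ref{assumption1} then joins $Y^z(t)$ to $\phi(t)$ by a short curve of cost at most $\mathcal{K}(T(\rho)) \to 0$. Concatenating this connector with $\phi|_{[t, T]}$ (cost $< \eta$) and with a cheap $y \to z$ path (cost $< \eta$) produces a path from $Y^z(t)$ to $z$ of total cost tending to zero, forcing $\alpha = 0$.

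The delicate technical points are: (i) making the convergence $\phi(t) \to Y^z(t)$ quantitative and uniform enough to feed into Lemma~\ref{assumption1}, which relies on compactness of level sets together with the uniqueness of the ODE solution starting at $z$; and (ii) checking that the trajectory $\{Y^z(t)\}$ stays in $\mathring A \cap \bar O$, so Lemma~\ref{assumption1} applies, which follows from the characteristic-boundary structure of $\widetilde{\partial O}$ expressed in \eqref{condboundcarac} together with the invariance of $\bar O$ under the flow.
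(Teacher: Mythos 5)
Your overall framework — compactness of rate-function level sets plus lower semicontinuity to identify the limit of near-zero-cost paths with $Y^z$ — matches the spirit of the paper's argument, and your point (i) correctly identifies this as the compactness/uniqueness step. But the proof has a genuine gap in the "main step," and it sits exactly where the paper does its hardest work.

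You assert that by chaining cheap $z\to y$ and $y\to z$ legs you "can produce a path $\phi$ from $z$ to $y$ of duration $T\geq t$ with $I_T(\phi)<\eta$," remarking that padding is harmless because each leg contributes at most $\eta$. That does not hold up. If each cheap leg has cost $\epsilon$ but duration $T_\epsilon$, then to reach total duration $\geq t$ you need roughly $t/T_\epsilon$ legs, for a total cost on the order of $t\,\epsilon/T_\epsilon$. Since the definition of $\mathcal R$ gives no control on how $T_\epsilon$ behaves as $\epsilon\to 0$ — a priori $T_\epsilon$ could shrink much faster than $\epsilon$ — the product $\epsilon/T_\epsilon$ need not be small, and the padded path's cost can stay bounded away from zero. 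Without a lower bound on the durations, you cannot even produce a low-cost path whose domain contains $[0,t]$, so the compactness step has nothing to act on.

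This is precisely what the paper establishes first and you omit: using boundedness of the rates $\sigma=\sup_{j,z}\beta_j(z)$ together with Lemma~1 of~\cite{Sam2017}, it shows that any path from $z$ to $y\neq z$ with $I_{T_n}\leq 1$ satisfies
\[
T_n \;\geq\; \sigma^{-1}\exp\!\Big(-\frac{2k\sqrt{d}}{|z-y|}\Big) \;>\; 0,
\]
a uniform lower bound depending only on $|z-y|$. With that in hand, the paper restricts the minimizing paths to the fixed interval $[0,T]$, extracts a uniformly convergent subsequence, identifies the limit as $Y^z$ on $[0,T]$, concludes $Y^z(t)\mathcal R z$ for $t\leq T$, and then \emph{iterates} from $Y^z(T)$ (choosing whichever of $z,y$ is at distance $\geq\tfrac12|z-y|$ from $Y^z(T)$, which keeps the lower bound uniformly positive across iterations). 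Your padding-plus-Lemma~\ref{assumption1} bridge could indeed replace the paper's iteration once the duration lower bound is in place, but you must prove that lower bound first; as written the argument is circular at that point.
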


\begin{proof}
As $z\mathcal{R} y$ there exists a sequence of functions $\phi_{t}^{n}$, $0\leq t\leq T_{n}$, $\phi_{0}^{n}=z$, $\phi_{T_{n}}=y$, with values in $\bar{O}$ and such that $I_{T_{n}}(\phi^{n})\rightarrow 0$. The $T_{n}$ are bounded from below by a positive constant. Indeed 
there exists $n_{0}\in\mathbb{N}$ such that for all $n\geq n_{0}$
\begin{equation*}
I_{T_{n}}(\phi^{n}|\mu^{n})\leq1.
\end{equation*}
Now either $T_n\ge \sigma^{-1}$, where $\sigma:=\sup_{1\le j\le k,\ z\in A}\beta_j(z)$,
or else from
 the Lemma 1 in \cite{Sam2017}, for all $1\leq j\leq k$,
\begin{equation}\label{sam1}
\int_{0}^{T_{n}}\mu^{n,j}_{t}dt\leq\frac{2}{-\log(\sigma T_{n})}.
\end{equation}
Moreover $\frac{d\phi^{n}_{t}}{dt}=\overset{k}{\underset{j=1}{\sum}}\mu^{n,j}_{t}h_{j}$, hence
\begin{align*}
    |z-y|&\leq \Big|\int_{0}^{T_{n}}\frac{d\phi^{n}_{t}}{dt}dt\Big|  \\
    &\leq \sqrt{d}\sum_{j=1}^{k} \int_{0}^{T_{n}}\mu^{n,j}_{t}dt ,
\end{align*}
thus there exists $1\leq j\leq k$ such that
\begin{equation}\label{sam2}
\frac{1}{k\sqrt{d}}|z-y|\leq \int_{0}^{T_{n}}\mu^{n,j}_{t}dt.
\end{equation}
Now, combining \eqref{sam1} and \eqref{sam2} we deduce that
\begin{equation*}
T_n\ge\sigma^{-1}\exp\left(-\frac{2k\sqrt{d}}{|z-y|}\right)>0,
\end{equation*}
which shows that  $T_{n}\geq T$ for all $n$ and some $T>0$. Now $I_{T}(\phi_{t}^{n})$ converges to $0$ as $n\to\infty$. Therefore, for a constant $s>0$, there exists $n_{0}\in \mathbb{N}$ such that for all $n\geq n_{0}$, $I_{T}(\phi^{n})<s$. By the compactness of the set $\{\psi: I_{T}(\psi)\leq s\}$, there exists a subsequence $(\phi^{n_{k}})_{n_{k}}$ of these functions which converges, uniformly on $[0, T]$, to a function $\phi_{t}$. As $I_{T}$ is lower semicontinuous, we have
\begin{equation*}
0=\liminf_{k\to\infty}I_{T}(\phi^{n_{k}})\geq I_{T}(\phi).
\end{equation*}
 Thus $I_{T}(\phi)=0$ and $\phi$ is the trajectory of the dynamical system \eqref{ODE} starting from $z$. The points $\phi_{t}$, $0\leq t\leq T$, are equivalent to $z$ and $y$ since we have $V_{\bar{O}}(z,\phi_{t}^{n})$ and $V_{\bar{O}}(\phi_{t}^{n}, y)$ do not exceed $I_{T_{n}}(\phi^{n})\rightarrow 0$ as $n\rightarrow \infty$.

 Let $u$ be one of the points $z$ and $y$ such that that $|\phi_{T}-u|\geq \frac{1}{2}|z-y|$ then $\phi_{T}\mathcal{R} u$. In the same way as earlier, we can find some time interval in which the points of the dynamical system starting from $\phi_{T}$ are equivalent to $u$. We obtain the result by a successive application of the above reasoning. {\color{red} cf. preuve Lemme 1.5 page 165 de FW}
\end{proof}

\begin{lemma}\label{lemesmallint}
Let all points of a compact $K\subseteq \mathring{O}\cup\widetilde{\partial O}$ be equivalent to each other but not equivalent to any other point in $O\cup\widetilde{\partial O}$. For any $\eta>0$, $\delta>0$ and $z,y\in K$ there exists a function $\phi_{t}$, $0\leq t\leq T$, $\phi_{0}=z$, $\phi_{T}=y$, entirely in the intersection of $\bar{O}$ with the $\delta$-neighborhood of $K$ and such that $I_{T}(\phi)<\eta$.
\end{lemma}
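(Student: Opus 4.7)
The plan is to produce a minimizing sequence of curves joining $z$ to $y$ with action tending to $0$, and to show by compactness that any uniform limit lies inside $K$, so that for large enough index the curve itself is already confined to $B_\delta(K)\cap \bar O$.

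First, applying Lemma \ref{lemma1.2} to the compact set $K$ with $\eta_n = 1/n$ yields a uniform $T_0>0$ and curves $\phi^n:[0,T_n]\to \bar O$ with $T_n\leq T_0$, $\phi^n_0=z$, $\phi^n_{T_n}=y$, and $I_{T_n}(\phi^n)\leq 1/n$ (we use that $z\mathcal{R}y$, so $V_{\bar O}(z,y)=0$). To put all the $\phi^n$ on a common time interval, concatenate each with a piece of the deterministic flow issued from $y$:
\[
\tilde\phi^n_t := \begin{cases} \phi^n_t, & t\in[0,T_n],\\ Y^y(t-T_n), & t\in[T_n,T_0]. \end{cases}
\]
Assume $K$ contains at least two points (the case $K=\{y\}$ is immediate since $y$ must then be a fixed point of \eqref{ODE} and the constant trajectory works). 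Then Lemma \ref{lemequiva} applied to $(y,z')$ for any $z'\in K\setminus\{y\}$ gives $Y^y(s)\in K\subseteq \bar O$ for every $s\geq 0$, and the appended ODE segment contributes nothing to the action, so $I_{T_0}(\tilde\phi^n)=I_{T_n}(\phi^n)\leq 1/n$.

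Since $I_{T_0}$ is a good rate function, the sequence $(\tilde\phi^n)$ is relatively compact in $D_{T_0,\bar O}$; extract a subsequence converging (in the Skorokhod topology, hence uniformly because the limit will be continuous) to some $\tilde\phi^\infty$. By lower semicontinuity $I_{T_0}(\tilde\phi^\infty)=0$, hence $\tilde\phi^\infty$ solves \eqref{ODE} with $\tilde\phi^\infty_0=z\in K$, and Lemma \ref{lemequiva} forces $\tilde\phi^\infty_t\in K$ for every $t\in[0,T_0]$. Choosing $n$ large enough that $\|\tilde\phi^n-\tilde\phi^\infty\|_{T_0}<\delta$ and $1/n<\eta$, the restriction of $\tilde\phi^n$ to $[0,T_n]$ is a trajectory from $z$ to $y$ lying inside $B_\delta(K)\cap\bar O$ with action less than $\eta$, as required.

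The only delicate step is the uniform time-horizon trick: without placing all the $\phi^n$ on a common interval $[0,T_0]$ one cannot directly invoke the compactness of the sublevel sets of $I$. Once this is arranged, the conclusion follows mechanically from Lemmas \ref{lemma1.2} and \ref{lemequiva} together with the goodness and lower semicontinuity of the rate function.
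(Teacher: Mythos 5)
Your argument is correct, but it is a somewhat more elaborate construction than the one in the paper. The paper works directly with the minimizing sequence $\phi^n$ and argues by contradiction \emph{at the level of points}: if every $\phi^n$ (with $n>n_0$) left the $\delta$-neighborhood of $K$, pick for each one a leave-point $x_n=\phi^n_{s_n}\in \bar O\setminus B_\delta(K)$; by compactness of $\bar O\setminus B_\delta(K)$ extract a limit $x$, and by continuity of $V_{\bar O}$ deduce $V_{\bar O}(z,x)=V_{\bar O}(x,y)=0$, hence (via the triangle inequality and $z\mathcal{R}y$) $x\mathcal{R}z$, contradicting the assumption that nothing outside $K$ is equivalent to $z$. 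That route needs only compactness of $\bar O$ and continuity of $V_{\bar O}$ --- no uniform time horizon, no goodness of the rate function, and no appeal to Lemma~\ref{lemequiva}. Your route passes to a limit of the \emph{curves} rather than of the leave-points: you must first force a common time horizon $[0,T_0]$ (hence the invocation of Lemma~\ref{lemma1.2} and the concatenation-with-the-flow trick, whose well-posedness in turn leans on Lemma~\ref{lemequiva} to keep the appended piece inside $\bar O$), then use compactness of sublevel sets of $I_{T_0}$ and lower semicontinuity to identify the limit as the ODE trajectory through $z$, and finally Lemma~\ref{lemequiva} once more to place that trajectory inside $K$. Every step is sound, and your handling of the degenerate case $K=\{y\}$ is correct (though note that in the contradiction variant of the paper no case distinction is needed). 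What your version buys is a direct, constructive description of the approximating curve as a uniform $\delta$-perturbation of a genuine ODE trajectory in $K$; what it costs is the additional machinery of Lemmas~\ref{lemma1.2}, \ref{lemequiva}, and the goodness of $I_{T_0}$, which the paper's leaner pointwise argument avoids.
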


\begin{proof}
As $z,y\in K$ there exists a sequence of functions $\phi_{t}^{n}$, $0\leq t\leq T_{n}$, $\phi_{0}^{n}=z$, $\phi_{T_{n}}^{n}=y$, with values in $O\cup\widetilde{\partial O}$ and such that $I_{T_{n}}(\phi^{n})\rightarrow 0$. And then there exists $n_{0}\in\mathbb{N}$ such that for all $n>n_{0}$, $I_{T_{n}}(\phi^{n})<\eta$. If all curves $\phi_{t}^{n}$ with $n>n_{0}$ left the $\delta$-neighborhood of $K$, then they would have a limit point $x$ outside of this $\delta$-neighborhood and we have $V_{\bar{O}}(z,y)=V_{\bar{O}}(y,z)=0$ thus $x$ is equivalent to $z$ and $y$. A contradiction since all points of a compact $K$ are equivalent to each other but not equivalent to any other point in $\bar{O}$.
\end{proof}

\begin{lemma}\label{lemecomp}
Let $K$ be a compact subset of $O\cup\widetilde{\partial O}$ not containing any $\omega$-limit set \footnote{A point $\bar z$ is called an $\omega-$limit point of solution $Y(t,z_0)$ of a dynamical system if there exists a sequence $(t_n)_{n\geq1}$ of time such that $t_n\to\infty$ as $n\to\infty$, for which $$Y(t_n,z_0)\to\infty,~~~n\to\infty$$ holds. The set of all such points of $Y(t,z_0)$ is called $\omega-$limit set of $Y(t,z_0)$ and denote $\omega(z_0)$.} entirely. There exist positive constants $c$ and $T_{0}$ such that for all sufficiently large $N$ and any $T>T_{0}$ and $z\in K$ we have
\begin{equation*}\label{probcomp}
  \mathbb{P}_{z}(\tau^{N}_{K}> T) \leq \exp\{-N.c(T-T_{0})\}
\end{equation*}
where $\tau^{N}_{K}$ is the time of first exit of $\tilde{Z}_{t}^{N}$ from $K$ and under $\mathbb{P}_{z}$, $\tilde{Z}_{t}^{N}$ starts from the point $z^{N}$ defined by \eqref{initialcond}.
\end{lemma}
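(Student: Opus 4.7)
The plan is to combine a deterministic-exit statement with the uniform LDP upper bound of Theorem~\ref{LDP}(d), and to iterate via the Markov property of $\tilde{Z}^N$.

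First I would establish that the deterministic flow must exit $K$ within a uniform finite time. Since $K$ contains no $\omega$-limit set entirely, no orbit $Y^z(\cdot)$ starting in $K$ can stay in $K$ for all $t\ge 0$: otherwise, by closedness of $K$, $\omega(z)$ would be a nonempty subset of $K$, contradicting the hypothesis. Combined with continuous dependence of $Y^z$ on $z$ and compactness of $K$ (finite subcover), this yields a $T_0>0$ such that every $z\in K$ satisfies $Y^z(t)\notin K$ for some $t\in[0,T_0]$.

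Next I would lift this to a positive lower bound on $I_{T_0}$. Set $F_K:=\{\phi\in D_{T_0,\bar O}:\phi_t\in K\ \forall t\in[0,T_0]\}$; this is closed in $D_{T_0,\bar O}$ since $K$ is closed. I claim that, possibly after enlarging $T_0$,
\[c_0:=\inf_{\phi\in F_K}I_{T_0}(\phi)>0.\]
Indeed, a sequence $\phi^n\in F_K$ with $I_{T_0}(\phi^n)\to 0$ would, by the good-rate-function property together with lower semicontinuity, possess a subsequential limit $\phi^*\in F_K$ with $I_{T_0}(\phi^*)=0$, i.e.\ $\phi^*$ is a solution of~\eqref{ODE} trapped in $K$ on $[0,T_0]$. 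Iterating this on arbitrarily long intervals and using compactness of $K$ plus continuous dependence on initial data (diagonal extraction on the starting points $\phi^*_0$) would produce $z^{**}\in K$ with $Y^{z^{**}}(t)\in K$ for all $t\ge 0$, contradicting the first step.

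Finally I apply Theorem~\ref{LDP}(d) to the closed set $F_K$: uniformity in the starting point gives, for any $\eta\in(0,c_0)$ and all $N$ large enough,
\[\sup_{z\in K}\mathbb{P}_z\bigl(\tilde{Z}^N\in F_K\bigr)\le\exp\bigl\{-N(c_0-\eta)\bigr\}.\]
Since $\{\tau^N_K>T_0\}\subseteq\{\tilde{Z}^N\in F_K\}$, the same bound holds for $\mathbb{P}_z(\tau^N_K>T_0)$. Conditioning successively at the times $kT_0$ via the Markov property (on $\{\tau^N_K>kT_0\}$ the process sits in $K$, so the uniform one-step bound applies to the post-$kT_0$ trajectory) yields $\mathbb{P}_z(\tau^N_K>nT_0)\le\exp\{-nN(c_0-\eta)\}$; for an arbitrary $T>T_0$ one takes $n=\lfloor T/T_0\rfloor$ and $c=(c_0-\eta)/T_0$. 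The main obstacle is the second step: turning the qualitative deterministic-exit statement into the quantitative bound $c_0>0$ requires a careful combination of the good-rate-function property, lower semicontinuity of $I_{T_0}$, closedness of $K$, and the $\omega$-limit hypothesis, most cleanly through a contradiction and diagonal extraction; once $c_0$ is in hand, the remaining ingredients are standard Freidlin--Wentzell-type manipulations.
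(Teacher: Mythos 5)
Your proposal is correct, and it follows essentially the same Freidlin--Wentzell strategy as the paper, with one genuine mechanical difference. The paper first enlarges $K$ to a closed $\delta$-neighborhood $K^\delta$ (chosen small enough so that $K^\delta$ still contains no $\omega$-limit set entirely), shows that the quasipotential of paths trapped in $K^\delta$ is bounded below by some $I_0>0$, and then observes that a trajectory $\tilde{Z}^N$ trapped in $K$ over $[0,T_0]$ must be at sup-norm distance at least $\delta$ from the level set $\Phi_z(I_0)$; this lets it invoke part~(c) of Theorem~\ref{LDP}. You instead work directly with the closed set $F_K=\{\phi: \phi_t\in K\ \forall t\in[0,T_0]\}$ and invoke part~(d). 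Both are valid. Your variant is arguably tidier because it avoids the need to check that the $\delta$-fattening of $K$ still contains no $\omega$-limit set, while the paper's variant is closer to the classical Freidlin--Wentzell presentation (and is slightly more robust if only the distance-to-level-set estimate~(c) were available, not the full closed-set upper bound~(d)).

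One small remark: the ``iterating on arbitrarily long intervals and diagonal extraction'' in your argument for $c_0>0$ is superfluous. Once $T_0$ is chosen in your first step so that every $z\in K$ satisfies $Y^z(t)\notin K$ for some $t\in[0,T_0]$, the existence of a solution $\phi^*$ of~\eqref{ODE} with $\phi^*_t\in K$ for all $t\in[0,T_0]$ already contradicts that choice directly; the compactness of level sets, lower semicontinuity of $I_{T_0}$, and closedness of $F_K$ are exactly what you need and no more. Your Markov iteration and the final conversion from $n=\lfloor T/T_0\rfloor$ to the bound $\exp\{-Nc(T-T_0)\}$ with $c=(c_0-\eta)/T_0$ match the paper's computation.
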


\begin{proof}
As $K$ does not contain any $\omega$-limit set entirely, we can choose $\delta$ sufficiently small such that the closed $\delta$-neighborhood $K^{\delta}$ does not contain any $\omega$-limit set entirely, either.
For $z\in K^{\delta}$, let 
\begin{equation*}
\tau(z)=\inf\{t>0: Y^{z}(t)\not\in K^{\delta}\}.
\end{equation*}
where $Y^{z}(t)$ is the solution of \eqref{ODE} starting from $z$. We have $\tau(z)<\infty$ for all $z\in K^{\delta}$. By the continuous dependence of a solution on the initial conditions,the function $\tau(z)$ is upper semi-continuous, and then it attains its largest value $T_{1}=\sup_{z\in K^{\delta}}\tau(z)<\infty$.
\par Fix $T_{0}=T_{1}+1$ and let $\mathcal{F}^{K^{\delta}}$ the set of all functions $\phi_{t}$ defined for $0\leq t\leq T_{0}$ and assuming values only in $K^{\delta}$. the set of these functions is closed in the sense of uniform convergence and because $I_{T_0}$ is lower semi-continuous,
\begin{equation*}
I_{0}=\min_{\phi\in\mathcal{F}^{K^{\delta}}}I_{T_{0}}(\phi)\quad\text{is attained on $\mathcal{F}^{K^{\delta}}$}.
\end{equation*}
Moreover for all $\phi\in\mathcal{F}^{K^{\delta}}$, $I_{T_{0}}(\phi)>I_{0}>0$ since there are no trajectories of the dynamical system \eqref{ODE} in $\mathcal{F}^{K^{\delta}}$. If a function $\phi$ spend a time $T$ in $K$ with $T$ longer than $T_{0}$ we have $I_{T}(\phi)\geq I_{0}$; for the functions $\phi$ spending time $T\geq2 T_{0}$ in $K$ we have $I_{T}\geq 2I_{0}$, and so on. We deduce that for all $\phi$ spending time $T$ in $T>T_{0}$ in $K$ we have 
\begin{equation*}
  I_{T}(\phi)\geq I_{0}\Big[\frac{T}{T_{0}}\Big]>I_{0}\Big(\frac{T}{T_{0}}-1\Big)=\frac{I_0}{T_0}(T-T_{0}).
\end{equation*}
For $z\in K$ the functions in the set
\begin{equation*}
  \Phi_{z}(I_{0})=\{\phi: \phi_{0}=z, I_{T_{0}}(\phi)\leq I_{0}\}
\end{equation*}
leave $K^{\delta}$ during the time from 0 to $T_{0}$; the trajectories $\tilde{Z}^{N,z}_{t}$ for which $\tau^{N}_{K}>T_{0}$, are at a distance greater than $\delta$ to this set. We deduce by using Theorem \ref{LDP} that for any $z\in K$
\begin{align*}
  \mathbb{P}_{z}(\tau^{N}_{K}> T_{0}) &\leq \mathbb{P}(\rho_{T_{0}}(\tilde{Z}^{N},\Phi_{z}(I_{0}))\geq\delta) \\
   &\leq\exp\{-N(I_{0}-\eta)\}.
\end{align*}
Now we use the Markov property and we have
\begin{align*}
  \mathbb{P}_{z}(\tau^{N}_{K}> (n+1)T_{0}) &\leq \mathbb{E}_{z}\big(\tau^{N}_{K}> nT_{0}; \mathbb{P}_{\tilde{Z}^{N}(nT_{0})}(\tau^{N}_{K}>T_{0})\big) \\
  &\leq  \mathbb{P}_{z}(\tau^{N}_{K}>nT_{0}) \sup_{y\in K}\mathbb{P}_{y}(\tau^{N}_{K}>T_{0}).
\end{align*}
We obtain by induction that
\begin{align*}
  \mathbb{P}_{z}(\tau^{N}_{K}>T) &\leq  \mathbb{P}_{z}\Big(\tau^{N}_{K}>\Big[\frac{T}{T_{0}}\Big]T_{0}\Big)\\
   &\leq \Big(\sup_{y\in K}\mathbb{P}_{y}(\tau^{N}_{K}>T_{0})\Big)^{\big[\frac{T}{T_{0}}\big]} \\
   &\leq \exp\Big\{-N\Big[\frac{T}{T_{0}}\Big](I_{0}-\eta)\Big\}.
\end{align*}
Hence the result with $c=\frac{I_{0}-\eta}{T_0}$, where $\eta$ is an arbitrary small number.
\end{proof}

The following assumptions comes essentially from \cite{freidlin2012random} (page 150).
\begin{assumption}\label{assump32}
 There exists a finite number of compacts $K_{1},\cdots K_{M}\subseteq \widetilde{\partial O}$ such that
\begin{description}
  \item [(1)] $z,y\in K_{i}\quad\text{implies}\quad z\mathcal{R} y$
  \item [(2)] $z\in K_{i}, y\not\in K_{i}\quad\text{implies}\quad z\!\not\!\!{\mathcal{R}} y$ 
 \item [(3)] every $\omega$-limit set of \eqref{ODE} with $Y^{z}(0)\in\widetilde{\partial O}$ is contained entirely in one of the $K_{i}$.
\end{description}
\end{assumption}
 We moreover define $K_{0}=\{z^{*}\}$. 
We now construct as in \cite{day1990large} and  \cite{freidlin2012random} an embedded Markov chain $\tilde{Z}_{n}$ associated to the process $\tilde{Z}^{N}(t)$ in the following way: let $\rho_{0}$ and $\rho_{1}$ such that $0<\rho_{1}<\rho_{0}<\frac{1}{2}\underset{i\neq j}{\min}~~dist(K_{i},K_{j})$,
\begin{align}
  G^{1}_{i}&:= \overline{B_{\rho_{1}}(K_{i})}; \label{setg} \\ 
  C&:= \bar{O}\setminus\bigcup_{0}^{M}B_{\rho_{0}}(K_{i});  \label{setC}\\ 
  \Gamma_{i}&:= B_{2\rho_{0}}(K_{i})\setminus B_{\rho_{0}}(K_{i}); \label{setgamma} \\ 
  \theta_{0}&:= 0; \label{tau0}\\ 
  \sigma_{n}&:= \inf\{t\geq \theta_{n}: \tilde{Z}^{N}(t)\in C\}; \label{sigman}\\ 
  \theta_{n+1}&:= \inf\Big\{t\geq \sigma_{n}: \tilde{Z}^{N}(t)\in\bigcup_{0}^{M}G^{1}_{i}\Big\}; \label{taun} \\  
  \tilde{Z}_{n}&:= \tilde{Z}^{N}(\theta_{n}). \label{Ztilde}
\end{align}
And
\begin{equation}\label{timeexit}
\tau_{O}^{N}:=\inf \Big\{t>0: \tilde{Z}^{N}(t^-)+\frac{1}{N}\sum_{j=1}^{k}h_{j} \Delta Q_{t}^{j}\not\in\bar{O}\Big\}
\end{equation}
where $Q^{j}_{.}$ is defined by \eqref{Qaux} and $\Delta Q_{t}^{j}=Q_{t}^{j}-Q_{t^{-}}^{j}$.

We also introduce the quantities $\tilde{V}_{\bar{O}}(z,y)$, $\tilde{V}_{\bar{O}}(z,K_{i})$, $\tilde{V}_{\bar{O}}(K_{i},K_{j})$ and $V_{\bar{O},K_{0}^{c}}(z,y)$ defined as in \cite{freidlin2012random} by: $\forall z,y\in\bar{O}$
\begin{align*}
\widetilde{V}_{\bar{O}}(z,y) &:= \inf\Big\{I_{T}(\phi):T>0, \phi_{0}=z, \phi_{T}=y,\phi_{t}\in\bar{O}\setminus\bigcup_{\ell=0}^M K_{\ell}~~\forall t\in(0,T)\Big\} \\
\widetilde{V}_{\bar{O}}(z,K_{i})&:=\inf\Big\{I_{T}(\phi):T>0, \phi_{0}=z, \phi_{T}\in K_{i}, \phi_{t}\in\bar{O}\setminus\bigcup_{\ell\neq i}K_{\ell}~~\forall t\in(0,T)\Big\} \\
 \widetilde{V}_{\bar{O}}(K_{i},K_{j}) &:= \inf\Big\{I_{T}(\phi):T>0, \phi_{0}\in K_{i}, \phi_{T}\in K_{j},\phi_{t}\in\bar{O}\setminus\bigcup_{\ell\neq i,j}K_{\ell}~~\forall t\in(0,T)\Big\}\\
V_{\bar{O},K_{0}^{c}}(z,y) &:= \inf\{I_{T}(\phi):T>0,\phi_{0}=z,  \phi_{T}=y,\phi_{t}\in\bar{O}\setminus K_{0}~~\forall t\in(0,T) \} .
\end{align*}

 We now establish the following equality.
\begin{lemma}\label{remark1}
Under the hypothesis \eqref{condboundcarac} of a characteristic boundary, we have
\begin{equation}\label{Vbord}
V_{\widetilde{\partial O}}:=\min_{i=1}^{M}\widetilde{V}_{\bar{O}}(K_{0},K_{i}).
\end{equation}
\end{lemma}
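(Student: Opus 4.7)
The direction $V_{\widetilde{\partial O}} \leq \min_{1 \leq i \leq M} \widetilde V_{\bar O}(K_0, K_i)$ is immediate from the definitions: any trajectory $\phi$ admissible in $\widetilde V_{\bar O}(K_0, K_i)$ connects $z^*$ to a point $y \in K_i \subseteq \widetilde{\partial O}$, so $V_{\bar O}(z^*, y) \leq I_T(\phi)$ and the required inequality follows by taking infima over $\phi$ and then over $i$.

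For the reverse inequality I would fix $\epsilon > 0$, choose $y \in \widetilde{\partial O}$ and $\phi \colon [0, T] \to \bar O$ with $\phi_0 = z^*$, $\phi_T = y$, and $I_T(\phi) \leq V_{\widetilde{\partial O}} + \epsilon$, and then extend $\phi$ past time $T$ by the deterministic flow, setting $\tilde\phi(t) := Y^y(t - T)$ for $t \geq T$. The characteristic condition \eqref{condboundcarac} keeps the extension inside $\widetilde{\partial O} \subseteq \bar O$ and contributes zero to the rate function. By Assumption \ref{assump32}(3) the $\omega$-limit set of $y$ is contained in some $K_{i_\star}$, so $\tilde\phi$ enters every $B_\delta(K_{i_\star})$ in finite time.

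I then let $s_\delta$ be the first time $\tilde\phi$ hits $B_\delta(K_j)$ for some $j \geq 1$, and let $j_\delta \in \{1, \ldots, M\}$ be the corresponding index; by construction, on $(0, s_\delta)$ the trajectory visits no $K_\ell$ with $\ell \notin \{0, j_\delta\}$, provided $\delta$ is smaller than half the mutual distances of the $K_i$. Passing to a subsequence $\delta_n \to 0$ I may assume $j_{\delta_n} \equiv j_\star$. What remains is to splice onto $\tilde\phi|_{[0, s_{\delta_n}]}$ an endpiece joining $\tilde\phi(s_{\delta_n}) \in B_{\delta_n}(K_{j_\star})$ to an actual point $p_n \in K_{j_\star}$ with $|\tilde\phi(s_{\delta_n}) - p_n| \leq \delta_n$, at vanishing cost. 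Applying Lemma \ref{lemma1.2} on a suitable compact of $\mathring O \cup \widetilde{\partial O}$ containing both endpoints produces a connector of cost at most $V_{\bar O}(\tilde\phi(s_{\delta_n}), p_n) + 1/n$, and for $\delta_n$ small the connector stays in a neighborhood of $K_{j_\star}$ disjoint from the other $K_\ell$. The resulting concatenation is admissible for $\widetilde V_{\bar O}(K_0, K_{j_\star})$, so $\widetilde V_{\bar O}(K_0, K_{j_\star}) \leq V_{\widetilde{\partial O}} + \epsilon + V_{\bar O}(\tilde\phi(s_{\delta_n}), p_n) + 1/n$.

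The hard part will be to show $V_{\bar O}(\tilde\phi(s_{\delta_n}), p_n) \to 0$ as $n \to \infty$. My plan is to combine Lemma \ref{lemesmallint} with Assumption \ref{assump32}(1)--(2), which together imply that the quasi-potential vanishes identically on $K_{j_\star} \times K_{j_\star}$, with a short-range connection from $\tilde\phi(s_{\delta_n})$ into $K_{j_\star}$, built by the same push-in construction as in Lemma \ref{assumption1}: Assumption \ref{assump2}(3) furnishes an interior detour of length $O(\delta_n)$, while Assumption \ref{assump2}(6) gives the rate-function bound $O(\delta_n^{1-\nu})$. Letting $n \to \infty$ and then $\epsilon \to 0$ in the inequality above, and taking the minimum over $i$, yields $\min_{i} \widetilde V_{\bar O}(K_0, K_i) \leq V_{\widetilde{\partial O}}$, completing the proof.
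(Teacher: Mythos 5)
Your argument takes the same route as the paper's: fix a near-optimal $\phi$ from $z^{\ast}$ to $\widetilde{\partial O}$ of cost $<V_{\widetilde{\partial O}}+\epsilon$, extend past time $T$ by the zero-cost ODE flow (which stays in $\widetilde{\partial O}$ by \eqref{condboundcarac}), and invoke Assumption~\ref{assump32}(3) to land near some $K_{j_\star}$. You then make explicit the final splice from a $\delta_n$-neighborhood of $K_{j_\star}$ onto $K_{j_\star}$ itself, which is the step the paper compresses into ``It follows that'' in its second case; your unified treatment also subsumes the paper's first case (where $\phi$ already touches some $K_j$ before time $T$ and one simply truncates, with no splice needed). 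One citation, however, does not do the work you ask of it: Lemma~\ref{lemma1.2} provides a connector whose cost is close to $V_{\bar O}(\tilde\phi(s_{\delta_n}),p_n)$ but says nothing about where that connector travels, so it cannot guarantee that the spliced path avoids the other $K_\ell$'s — which is precisely what admissibility for $\widetilde V_{\bar O}(K_0,K_{j_\star})$ requires. The correct instrument is the one you name in your closing paragraph: apply the push-in construction from the proof of Lemma~\ref{assumption1} directly to the pair $\bigl(\tilde\phi(s_{\delta_n}),p_n\bigr)$ — move both endpoints into $\mathring O$ by $O(\delta_n)$ via Assumption~\ref{assump2}.\ref{assump22}, join by a short interior segment, and bound the cost by $O(\delta_n^{1-\nu})$ via Assumption~\ref{assump2}.\ref{assump25}. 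That construction keeps the connector within $O(\delta_n)$ of $K_{j_\star}$ \emph{and} makes its cost vanish, so the intermediate reduction to showing $V_{\bar O}(\tilde\phi(s_{\delta_n}),p_n)\to0$, and the appeal to Lemmas~\ref{lemma1.2} and \ref{lemesmallint}, can be dropped altogether.
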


\begin{proof}
We fix $\epsilon>0$ arbitrary. Let $\phi_{t}$, $0\leq t\leq T$ be such that $\phi_{0}=z^{*}$, $\phi_{T}\in\widetilde{\partial O}$ and $I_{T}(\phi)<V_{\widetilde{\partial O}}+\epsilon$. If $\phi_{t}\in\overset{M}{\underset{i=1}{\bigcup}}K_{i}$ for some $t\in [0,T)$, we can find $0\le t_{0}<t_{1}<T$ such that $\phi_{t_{0}}=z^{*}$, $\phi_{t}\not\in \overset{M}{\underset{i=1}{\bigcup}}K_{i}$ for all $t_{0}< t< t_{1}$ and $\phi_{t_{1}}\in K_{j}$ for some $j$. Thus $\widetilde{V}_{\bar{O}}(K_{0},K_{j})\leq V_{\widetilde{\partial O}}+\epsilon$. Otherwise if $\phi_{t}$ has reached $\widetilde{\partial O}$ but avoided $\overset{M}{\underset{i=1}{\bigcup}}K_{i}$, then we can extend $\phi_{t}$ to $t>T$ as a solution of \eqref{ODE}. By the assumption \ref{assump32} (3), $\phi_{t}$ comes arbitrarily close to $\bigcup_{1}^{M}K_{i}$ as $t\rightarrow +\infty$, but without any increase in the value of $I_{T}(\phi)$. It follows that
\begin{equation*}
\min_{i=1}^{M}\widetilde{V}_{\bar{O}}(K_{0},K_{i})\leq V_{\widetilde{\partial O}}+\epsilon.
\end{equation*}
As $\epsilon$ is arbitrary and the reverse inequality, $V_{\widetilde{\partial O}}\leq \overset{M}{\underset{i=1}{\min}}\widetilde{V}_{\bar{O}}(K_{0},K_{i})$, is obvious, we have the result.
\end{proof}

\begin{lemma}\label{esti1}
For all $\eta>0$ there exists $\rho_{0}$ small enough such that for any $\rho_{2}$, $0<\rho_{2}<\rho_{0}$, there exits $\rho_{1}$, $0<\rho_{1}<\rho_{2}$ such that for all $N$ large enough, for all $z$ in the $\rho_{2}$-neighborhood $G^{2}_{i}$ of the compact $K_{i}(i=0,\cdots,M)$,  and all $j\geq0$ we have the inequalities:
\begin{align}\label{inequal1}
  \exp\{-N(\widetilde{V}_{\bar{O}}(K_{i},K_{j})+\eta)\} \leq \mathbb{P}_{z}\{ \widetilde{Z}_{1}\in G^{1}_{j }\}  \leq \exp\{-N(\widetilde{V}_{\bar{O}}(K_{i},K_{j})-\eta)\} 
\end{align}
\end{lemma}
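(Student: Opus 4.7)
The strategy is to deduce both inequalities from Theorem~\ref{LDP}, using the preliminary Lemmas~\ref{assumption1}--\ref{lemecomp} to supply, respectively, a near-optimal connecting trajectory and an a priori bound on the duration $\theta_1$. I treat the lower and upper bounds separately.

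\emph{Lower bound.} Fix $z\in G_i^2$. By the definition of $\widetilde V_{\bar O}(K_i,K_j)$, choose $T^{\ast}>0$, points $u\in K_i$, $v\in K_j$, and an absolutely continuous curve $\psi:[0,T^{\ast}]\to \bar O\setminus\bigcup_{\ell\neq i,j}K_\ell$ with $\psi_0=u$, $\psi_{T^{\ast}}=v$ and $I_{T^{\ast}}(\psi)\le \widetilde V_{\bar O}(K_i,K_j)+\eta/2$. Lemmas~\ref{assumption1} and~\ref{lemesmallint} let me prepend a short arc joining $z$ to $u$ of cost at most $\eta/4$, and, if necessary, insert a brief excursion so that the concatenated trajectory $\phi$ exits $\overline{B_{\rho_0}(K_i)}$ (hence visits $C$) before reaching its endpoint $v$, which lies strictly inside $G^1_j$. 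Shrinking the tube radius $\delta$ so that every path uniformly within $\delta$ of $\phi$ also visits $C$ and terminates inside $G^1_j$, Theorem~\ref{LDP}(a) yields
\[
\mathbb{P}_z(\widetilde Z_1\in G^1_j)\;\ge\;\mathbb{P}_z\bigl(\|\tilde Z^N-\phi\|_{T}<\delta\bigr)\;\ge\;\exp\{-N(\widetilde V_{\bar O}(K_i,K_j)+\eta)\}
\]
for all large $N$.

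\emph{Upper bound.} Applied to the compact $C$, which by construction contains no $\omega$-limit set, Lemma~\ref{lemecomp} furnishes a time $T_1$ and a constant $c>0$ such that $\mathbb{P}_z(\theta_1-\sigma_0>T_1)\le \exp\{-Nc(T_1-T_0)\}$, which I make negligible compared with the target bound by choosing $T_1$ large. On the complementary event $\{\theta_1\le T_1\}\cap\{\widetilde Z_1\in G^1_j\}$, the trajectory $\tilde Z^N|_{[0,\theta_1]}$ starts at $z^N\in G^2_i$, enters $C$, ends in $G^1_j$, and by the very definition of $\widetilde Z_1$ avoids every $\overline{B_{\rho_1}(K_\ell)}$ for $\ell\ne j$ throughout $[\sigma_0,\theta_1]$. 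Choosing $\rho_0,\rho_1,\rho_2$ small and closing the endpoints up to $K_i, K_j$ via Lemma~\ref{lemesmallint}, any deterministic $\phi$ witnessing this event must satisfy $I_{T_1}(\phi)\ge \widetilde V_{\bar O}(K_i,K_j)-\eta/2$, so the event is at uniform distance $\ge\delta$ from $\Phi(\widetilde V_{\bar O}(K_i,K_j)-\eta)$ in the notation of Theorem~\ref{LDP}(c). That part of the LDP then delivers the bound $\exp\{-N(\widetilde V_{\bar O}(K_i,K_j)-\eta)\}$.

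The main obstacle is the quantitative continuity used in the upper bound: I must guarantee that relaxing the endpoints of the variational problem from $K_i,K_j$ to their small neighborhoods $G^2_i,G^1_j$ depresses the infimum $\widetilde V_{\bar O}$ only by $o(1)$ as $\rho_0,\rho_1,\rho_2\to 0$, and I must simultaneously rule out pathological trajectories that oscillate between several $K_\ell$'s before finally settling in $K_j$. Lemma~\ref{lemesmallint}, which constructs cheap loops inside each equivalence class $K_\ell$, is the right tool for the closing-up, and the avoidance condition built into $\{\widetilde Z_1\in G^1_j\}$ eliminates the oscillating trajectories.
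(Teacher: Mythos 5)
Your strategy is the paper's: lower bound via Theorem~\ref{LDP}(a) applied to a concatenation of a near-optimal connecting path with cheap short arcs from Lemmas~\ref{assumption1} and~\ref{lemesmallint}; upper bound via the strong Markov reduction to $\Gamma_i$, Lemma~\ref{lemecomp} to dispose of long excursions, and Theorem~\ref{LDP}(c) for the main estimate. One step in your upper bound would fail as written: applying Lemma~\ref{lemecomp} to $C=\bar{O}\setminus\bigcup_{\ell}B_{\rho_0}(K_\ell)$ does not bound $\mathbb{P}_z(\theta_1-\sigma_0>T)$, since between $\sigma_0$ and $\theta_1$ the process is free to re-enter the annuli $B_{\rho_0}(K_\ell)\setminus G^1_\ell$, thereby leaving $C$ without triggering $\theta_1$; so the event $\{\theta_1-\sigma_0>T\}$ does not imply a long sojourn in $C$, and the exponential estimate you invoke is not for the right exit time. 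The compact to feed Lemma~\ref{lemecomp} is $\bar{O}\setminus\bigcup_{\ell=0}^M B_{\rho_1}(K_\ell)$ (equivalently, the complement of the $G^1_\ell$), a set which the process does occupy throughout $[\sigma_0,\theta_1)$ and which still contains no $\omega$-limit set entirely. This is the choice the paper makes (modulo an apparent indexing typo in its statement of that set). With that substitution, your argument lines up with the paper's proof.
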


\begin{proof} 
\par By using Lemma \ref{assumption1} let $\rho>0$ such that $T(\rho)<\eta/3\mathcal{K}$. We choose $\rho_{0}>0$ smaller than $\rho/3$ and $\frac{1}{3}\min_{i,j}\dist(K_{i},K_{j})$, and $\rho_{2}\in(0, \rho_{0})$ .
For any two compacts $K_{i}$, $K_{j}$, $i\not=j$, for which $\widetilde{V}_{\bar{O}}(K_{i},K_{j})<\infty$, we choose a function $\phi^{i,j}_{t}$, $0\leq t\leq T_{i,j}$, such that $\phi^{i,j}_{0}\in K_{i}$, $\phi^{i,j}_{T_{i,j}}\in K_{j}$, $\phi^{i,j}_{t}$ does not touch $\underset{\ell\neq i,j}{\bigcup}K_{\ell}$ and for which 
\begin{equation}\label{inequal2}
  I_{T_{i,j}}(\phi^{i,j})\leq\widetilde{V}_{\bar{O}}(K_{i},K_{j})+\frac{\eta}{6}.
\end{equation}
Let 
\begin{equation*}
  \rho_3=\frac{1}{2}\min\left\{\dist\left(\phi^{i,j}_{t},\bigcup_{\ell\neq i,j}K_{\ell}\right): 0\leq t\leq T_{i,j}, i,j=0,...,M\right\}
\end{equation*}
We now choose $0<\rho_1<\min(\rho/2, \rho_2, \rho_3)$, and $\delta$ smaller than $\rho_{1}$, $\rho_{0}-\rho_{2}$. By Lemma \ref{assumption1}, for any $z\in G^{2}_{i}$, let 
$(\psi^{i,1}_{t}$, $0\leq t\leq \bar{T}_{i,1})$ with $\psi^{i,1}_{0}=z$, $\psi^{i,1}_{\bar{T}_{i,1}}=z'\in K_{i}$ such that $\psi^{i,1}$ stays in $G^{2}_{i}$ and
\begin{equation*}
I_{\bar{T}_{i,1}}(\psi^{i,1})\leq\frac{\eta}{6}.
\end{equation*}
We also have $\dist(\psi^{i,1},C)\geq\delta$ where $C$ is defined by \eqref{setC}. Moreover according to Lemma \ref{lemesmallint}, there exists a curve $(\psi^{i,2}_{t})_{t}$, $0\leq t\leq \bar{T}_{i,2}$ in $G^{2}_{i}$ with $\psi^{i,2}_{0}=z'$, $\psi^{i,2}_{\bar{T}_{i,2}}=\phi^{i,j}_{0}\in K_{i}$ such that 
\begin{equation*}
I_{\bar{T}_{i,2}}(\psi^{i,2})\leq\frac{\eta}{6}.
\end{equation*}
We combine these curves with the curve $\phi^{i,j}_{t}$ and we obtain a function $\phi_{t}$, $0\leq t\leq T=\bar{T}_{i,1}+\bar{T}_{i,2}+T_{i,j}$ ($\phi_{t}$ and $T$ depend on $z\in G^{2}_{i}$ and $j$) and $\phi_{0}=z$, $\phi_{T}\in K_{j}$ such that:
\begin{equation*}
  I_{T}(\phi)\leq \widetilde{V}_{\bar{O}}(K_{i},K_{j})+\frac{\eta}{2}.
\end{equation*}
If $j=i$ we define $\phi_{t}$, $0\leq t\leq T$ such that $\phi_{0}=z\in G^{2}_{i}$, $\phi_{T}=z''\in K_{i}$ and  $\dist(z',z'')=\dist(z',K_{i})$ and we have 
\begin{equation*}
  I_{T}(\phi)\leq \frac{\eta}{2}=\widetilde{V}_{\bar{O}}(K_{i},K_{i})+\frac{\eta}{2}.
\end{equation*}
It is easy using Lemma \ref{assumption1} and Lemma \ref{lemma1.2} to justify that the lengths of the intervals of definition of the functions $\phi_{t}$ constructed for all possible compacts $K_{i}$, $K_{j}$ and point $z\in G^{2}_{i}$ can be bounded from above by a constant $T_{0}<\infty$. The functions $\phi_{t}$ can be extended to the intervals from $T$ to $T_{0}$ to be a solution of \eqref{ODE} so that $I_{T}(\phi)=I_{T_{0}}(\phi)$.
\par Any trajectory $\tilde{Z}^{N,z}(t)$ such that $\|\tilde{Z}^{N,z}-\phi\|_{T_{0}}\leq\delta$  reaches the $\delta$-neighborhood of $K_{j}$ without getting closer than $2\rho_{1}-\delta$  to any of the other compacts and then $\tilde{Z}_{1}=\tilde{Z}^{N,z}(\theta_{1})\in G^{1}_{j}$. Thus using  the large deviation Theorem \ref{LDP} , we deduce that there exists $N_{0}$ depending only on $\eta$, $T_{0}$ and $\delta$ such that for all $N\geq N_{0}$ we have 
\begin{align*}
   \mathbb{P}_{z}\{\tilde{Z}_{1}\in G^{1}_{j}\} &\geq \mathbb{P}_{z}(\rho_{T}(\tilde{Z}^{N},\phi)<\delta) \\
   &\geq \exp\{-N(I_{T_{0}}(\phi)+\frac{\eta}{2})\} \\
   &> \exp\{-N(\widetilde{V}_{\bar{O}}(K_{i},K_{j})+\eta)\}.
\end{align*}
And the left inequality of the Lemma follows.

 Using the strong Markov property, it is sufficient to prove the right inequality for $z\in\Gamma_{i}$. With our choice of $\rho_{0}$ and $\delta$, for any curve $\phi_{t}$, $0\leq t\leq T$ beginning in a point of $\Gamma_{i}$, touching the $\delta$-neighborhood of $G^1_{j}$ and not touching the compacts $K_{\ell}$, $\ell\neq i,j$ we have
\begin{equation*}
  I_{T}(\phi)\geq \widetilde{V}_{\bar{O}}(K_{i},K_{j})-\eta/2.
\end{equation*} 
Using lemma \ref{lemecomp}, there exists two constants $c$ and $T_{1}$ such that for all  $N$ large enough and $z\in (O\cup\widetilde{\partial O})\setminus G$ where $G=\bigcup_{i=1}^{M}G^{1}_{i}$, we have:
\begin{equation*}
  \mathbb{P}_{z}(\theta_{1}>T)\leq\exp\{-N c(T-T_{1})\}\quad\text{for all $T>T_{1}$}.
\end{equation*}
Now we fix a $T>T_{1}$ then any trajectory of $\tilde{Z}^{N,z}(t)$ beginning at a point $z\in\Gamma_{i}$ and being in $G^{1}_{j}$ at time $\theta_{1}$ and not touching the compacts $K_{\ell}$, $\ell\neq i,j$  either spends time $T$ without touching $G\cup\widetilde{\partial O}$ (i.e the event $\{\theta_{1}>T\}$ is realized) or reaches $G^{1}_{j}$ before time $T$ and in this second case, with the notation $\Phi_z(s)=\{\psi\in D_{T,\bar{O}}: \psi(0)=z,\ I_{T}(\psi)\leq s\}$, the event \begin{equation*}
 \big\{\rho_{T}(\tilde{Z}^{N,z},\Phi_{z}(\widetilde{V}_{\bar{O}}(K_{i},K_{j})-\eta/2))\geq\delta\big\} \text{ is realized}.
 \end{equation*}
Hence, for any $z\in\Gamma_{i}$ we have from Lemma \ref{lemecomp} and Theorem \ref{LDP} (c) that for
$N$ large enough
\begin{align*}
  \mathbb{P}_{z}(\tilde{Z}^{N}(\theta_{1})\in G^{1}_{j}) &\leq\mathbb{P}_{z}(\theta_{1}>T) 
   + \mathbb{P}_{z}(\rho_{T}(\tilde{Z}^{N},\Phi_{z}(\widetilde{V}_{\bar{O}}(K_{i},K_{j})-\eta/2))\geq\delta)\\
   &\leq \exp\{-N c(T-T_{1})\}+\exp\{-N(\widetilde{V}_{\bar{O}}(K_{i},K_{j})-2\eta/3)\}\\
   &\leq 2\exp\{-N(\widetilde{V}_{\bar{O}}(K_{i},K_{j})-2\eta/3)\}\quad\text{with $T$ large enough} \\
   &\leq \exp\{-N(\widetilde{V}_{\bar{O}}(K_{i},K_{j})-\eta)\}\quad\text{$N$ large enough such that $\frac{\ln(2)}{N}<\frac{\eta}{3}$} 
\end{align*}
The lemma is proved.
\end{proof}

 Before we present others lemmas which will be useful, let us define the sequence $\kappa_{n}\in\mathbb{N}$ for which $\tilde{Z}_{\kappa_{n}}=\tilde{Z}^{N,z}(\theta_{\kappa_{n}})\in G^{1}_{0}$. Note that the $\kappa_{n}$ are $\bar{Z}_{n}$ stopping times and that the $\theta_{n}$ and $\theta_{\kappa_{n}}$ are $\tilde{Z}^{N,z}(t)$ stopping times.

\begin{lemma} \label{Limit}
For all $z\in O$,
\begin{equation*}
\lim_{N \rightarrow \infty} \mathbb{P}_{z}\big(\tilde{Z}^{N}(\theta_{\kappa_{1}}\wedge\tau_{O}^{N}) \in G^{1}_{0}\big) =1.
\end{equation*}
\end{lemma}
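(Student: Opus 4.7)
\emph{Proof plan.} The strategy is to apply the large deviation upper bound (Theorem~\ref{LDP}(d)) to force $\tilde Z^{N,z}$ to shadow the ODE trajectory $Y^z$ on a bounded time interval. Since $z$ lies in the basin of attraction $O$ of $z^*$, one has $Y^z(t)\to z^*$ while $Y^z$ stays in $O$, so with probability tending to one $\tilde Z^{N,z}$ enters $G_0^1$ well before it can come near $\partial O$ or any $K_j$ with $j\ge 1$.

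Pick $T^{*}$ with $Y^z(T^{*})\in B_{\rho_1/4}(z^*)$, which is possible since $Y^z(t)\to z^*$. The set $\Gamma:=\{Y^z(t):0\le t\le T^{*}\}$ is a compact subset of the open set $O$, so $d:=\dist(\Gamma,\partial O)>0$; taking $\rho_0$ small enough one may assume $d>3\rho_0$, so that $\Gamma$ is also disjoint from $\bigcup_{j\ge 1}B_{3\rho_0}(K_j)$ (using $K_j\subset\widetilde{\partial O}\subset\partial O$ for $j\ge 1$). Fix $\delta<\min(d/2,\rho_1/4,\rho_0-\rho_1)$ and set $F:=\{\phi\in D_{T^{*},\bar O}:\phi_0=z,\ \|\phi-Y^z\|_{T^{*}}\ge\delta\}$. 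Since $Y^z$ is the unique trajectory issued from $z$ with $I_{T^{*}}=0$ (as $I_T(\phi)=0$ iff $\phi$ solves \eqref{ODE}) and $I_{T^{*}}$ is a good rate function, compactness of sublevel sets yields $s_0:=\inf_{\phi\in F}I_{T^{*}}(\phi)>0$. Theorem~\ref{LDP}(d) then gives $\mathbb P_z(E_N^c)\to 0$, where $E_N:=\{\|\tilde Z^{N,z}-Y^z\|_{T^{*}}<\delta\}$.

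On $E_N$: (a) $\dist(\tilde Z^{N,z}(t),\partial O)\ge d/2$ for $0\le t\le T^{*}$, so no reflection is triggered and $\tau_O^N>T^{*}$; (b) $\tilde Z^{N,z}(T^{*})\in B_{\rho_1/2}(z^*)\subset\mathring{G_0^1}$; (c) $\dist(\tilde Z^{N,z}(t),K_j)\ge 2\rho_0$ for every $j\ge 1$ and $t\le T^{*}$, so $\tilde Z^{N,z}$ never enters $G_j^1$ for $j\ge 1$ on $[0,T^{*}]$. If $z^N\in G_0^1$ the conclusion is immediate ($\kappa_1=0$, $\theta_{\kappa_1}=0$). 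Otherwise $\tilde Z^{N,z}$ enters $G_0^1$ for the first time at some $t^{*}\le T^{*}$, and because jumps of $\tilde Z^{N,z}$ have size $O(1/N)$ and cannot span the buffer $B_{\rho_0}(z^*)\setminus G_0^1$ (of width $\rho_0-\rho_1$) for $N$ large, the process must cross $C$ before entering $B_{\rho_0}(z^*)$; combined with (c) this forces $\sigma_0\le t^{*}$ and $\theta_1=t^{*}\le T^{*}$ with $\tilde Z^{N,z}(\theta_1)\in G_0^1$. Hence $\kappa_1=1$ and $\theta_{\kappa_1}\wedge\tau_O^N=\theta_1\le T^{*}$, so $\tilde Z^{N,z}(\theta_{\kappa_1}\wedge\tau_O^N)\in G_0^1$.

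The principal technical obstacle is the subcase $z^N\in B_{\rho_0}(z^*)\setminus G_0^1$, where $\tilde Z^{N,z}$ may enter $G_0^1$ directly from the annulus without ever visiting $C$, leaving $\theta_1$ potentially infinite on $E_N$. This is handled by a separate argument: on time scales sub-exponential in $N$ compared to $\tau_O^N$, $\tilde Z^{N,z}$ exits $B_{\rho_0}(z^*)$ to $C$, and by Lemma~\ref{esti1} applied with $K_i=K_0$ it returns to $G_0^1$ with probability tending to one, still yielding $\theta_{\kappa_1}<\tau_O^N$ with overwhelming probability.
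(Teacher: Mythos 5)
Your proof reaches the correct conclusion but differs from the paper's in two respects. First, the main tool: the paper invokes only the weak law of large numbers (uniform convergence on compact time intervals of $\tilde Z^{N,z}$ to $Y^z$), which is all that is needed here since no rate is asserted; you invoke the full LDP upper bound of Theorem~\ref{LDP}(d), a strictly stronger fact. Both arguments then rest on the same observation, namely that $Y^z$ stays at positive distance from $\widetilde{\partial O}$ and eventually lands near $z^\ast$, so shadowing $Y^z$ forces $\theta_{\kappa_1}<\tau_O^N$ with $\tilde Z^{N,z}(\theta_{\kappa_1})\in G_0^1$; your steps (a)--(c) spell this implication out more explicitly than the paper's one line, which is a useful supplement. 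Second, the starting region: the paper disposes of the case $z\in G_0^1$ by the strong Markov property at $\sigma_0$, reducing to a starting point $v\in\Gamma_0\subset C$, for which $\sigma_0=0$ and the tracking argument applies immediately. Exactly the same reduction handles your ``problematic subcase'' $z^N\in B_{\rho_0}(z^\ast)\setminus G_0^1$, since from there the process must cross $C$ before it can reach $\widetilde{\partial O}$, so one may again condition at $\sigma_0<\tau_O^N$ and land in $\Gamma_0$. Your proposed separate treatment of that subcase is the weak spot of the write-up: the exit time from $B_{\rho_0}(z^\ast)$ is exponential (not sub-exponential) in $N$; Lemma~\ref{esti1} is stated for starting points in $G_i^2$ (the $\rho_2$-neighborhood, $\rho_2<\rho_0$) rather than for general points of $\Gamma_0$; and the claim ``$\kappa_1=0$ when $z^N\in G_0^1$'' is inconsistent with the paper's convention, under which $\kappa_1\ge1$ (otherwise $\theta_{\kappa_1}=0$ and the events $\{\tau_O^N<\theta_{\kappa_1}\}$ appearing in Lemmas~\ref{lemma2.5} and~\ref{lemma2.6} for $z\in G_0^1$ would be empty). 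The strong Markov reduction at $\sigma_0$ is both simpler and closes each of these small gaps.
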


\begin{proof}
We prove that 
\begin{equation*}
\lim_{N \rightarrow \infty} \mathbb{P}_{z}\big(\tilde{Z}^{N}(\theta_{\kappa_{1}}\wedge\tau_{O}^{N})\not \in G^{1}_{0}\big) =0.
\end{equation*}
It is enough to take $z\in O \setminus G^{1}_{0}$.  Indeed if $z\in G^{1}_{0}$ by the strong Markov property,
\begin{align*}
  \mathbb{P}_{z}\big(\tilde{Z}^{N}(\theta_{\kappa_{1}}\wedge\tau_{O}^{N})\not \in G^{1}_{0}\big)  &=\mathbb{E}_{z}\Big(\mathbb{E}_{z}\big(\mathbf{1}_{\{\tilde{Z}^{N}(\theta_{\kappa_{1}}\wedge\tau_{O}^{N})\not \in G^{1}_{0}\}}|\mathcal{F}_{\sigma_{0}}\big)\Big)   \\
    &=\mathbb{E}_{z}\Big(\mathbb{P}_{\tilde{Z}^{N}(\sigma_{0})}\big(\tilde{Z}^{N}(\theta_{\kappa_{1}}\wedge\tau_{O}^{N})\not \in G^{1}_{0}\big)\Big) \\
    &\leq\sup_{v\in \Gamma_{0}}\mathbb{P}_{v}\big(\tilde{Z}^{N}(\theta_{\kappa_{1}}\wedge\tau_{O}^{N})\not \in G^{1}_{0}\big)
  \end{align*}
  
 Let furthermore $T:=\inf\{ t\geq 0 | Y^{z}(t) \in B_{\rho_{1}/2}(z^\ast)\}$. Since $Y^{z}$ is continuous and never reaches $\widetilde{\partial O}$, we have $\inf_{t\geq 0} \dist(Y^{z}(t),\widetilde{\partial O}) =:\delta >0$. Hence we have the following implication:
\begin{equation*}
\sup_{t \in [0,T]} | \tilde{Z}^{N,z}_t - Y^{z}(t)|\leq \frac{\delta}{2} \Rightarrow \tilde{Z}^{N,z}(\theta_{\kappa_{1}}\wedge\tau_{O}^{N})\in G^{1}_{0}.
\end{equation*}
In other words,
\begin{equation}
\mathbb{P}_{z}\big(\tilde{Z}^{N}(\theta_{\kappa_{1}}\wedge\tau_{O}^{N}) \notin G^{1}_{0}\big)
\leq \mathbb{P}_{z}\Big(\sup_{t \in [0,T]} | \tilde{Z}^{N,z}(t) -Y^{z}(t)| > \frac{\delta}{2}\Big). \label{ConvergenceA.s.}
\end{equation}
The right hand side of Inequality \eqref{ConvergenceA.s.} converges to zero as $N \rightarrow \infty$ by the weak law of large numbers established in \cite{Pard2017}.
\end{proof}

 In the following, we present some lemmas which are analogue to the lemmas of \cite{day1990large}.
\begin{lemma}\label{lemma2.2}
Given $\eta>0$, there exists $\rho_{0}>0$ (which can be chosen arbitrarily small) such that for any $\rho_{2}\in(0,\rho_0)$,  there exists $\rho_{1}\in(0, \rho_{2})$  and N large enough such that for all $z\in G^{1}_{0}$
\begin{equation*}
\exp(-N(V_{\widetilde{\partial O}}+\eta))\leq \mathbb{P}_{z}\left(\tilde{Z}_{1}\in \bigcup_{i=1}^{M}G^{1}_{i}\right)\leq \exp(-N(V_{\widetilde{\partial O}}-\eta)).
\end{equation*}
\end{lemma}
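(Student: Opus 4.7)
The plan is to reduce this directly to Lemma~\ref{esti1} applied with $i=0$, together with the identification $V_{\widetilde{\partial O}}=\min_{1\le i\le M}\widetilde{V}_{\bar O}(K_0,K_i)$ provided by Lemma~\ref{remark1}. The point is that $G^{1}_{0}\subset G^{2}_{0}$ (by our choice $\rho_1<\rho_2$), so any $z\in G^{1}_{0}$ lies in the set on which Lemma~\ref{esti1} applies with $i=0$, for every target index $j\in\{1,\dots,M\}$.

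Concretely, I would fix $\eta>0$, and apply Lemma~\ref{esti1} with the parameter $\eta'=\eta/2$. This produces $\rho_0>0$ such that for any $\rho_2\in(0,\rho_0)$ there exists $\rho_1\in(0,\rho_2)$ and $N_0$ such that for $N\ge N_0$, every $z\in G^{2}_0\supset G^1_0$, and every $j\in\{1,\dots,M\}$,
\[
\exp\bigl(-N(\widetilde V_{\bar O}(K_0,K_j)+\eta/2)\bigr)\le \mathbb{P}_z\{\tilde Z_1\in G^1_j\}\le \exp\bigl(-N(\widetilde V_{\bar O}(K_0,K_j)-\eta/2)\bigr).
\]
These are the only two estimates I will use.

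For the \emph{lower bound}, pick an index $i^{\ast}\in\{1,\dots,M\}$ realizing the minimum in Lemma~\ref{remark1}, so that $\widetilde V_{\bar O}(K_0,K_{i^{\ast}})=V_{\widetilde{\partial O}}$. Then
\[
\mathbb{P}_z\Bigl(\tilde Z_1\in\bigcup_{i=1}^{M}G^1_i\Bigr)\ge \mathbb{P}_z(\tilde Z_1\in G^1_{i^{\ast}})\ge \exp\bigl(-N(V_{\widetilde{\partial O}}+\eta/2)\bigr)\ge \exp\bigl(-N(V_{\widetilde{\partial O}}+\eta)\bigr),
\]
as required. For the \emph{upper bound}, use a union bound and Lemma~\ref{remark1}:
\[
\mathbb{P}_z\Bigl(\tilde Z_1\in\bigcup_{i=1}^{M}G^1_i\Bigr)\le \sum_{i=1}^{M}\exp\bigl(-N(\widetilde V_{\bar O}(K_0,K_i)-\eta/2)\bigr)\le M\,\exp\bigl(-N(V_{\widetilde{\partial O}}-\eta/2)\bigr).
\]
Enlarging $N_0$ if necessary so that $\log M<N\eta/2$ for all $N\ge N_0$ absorbs the combinatorial factor, yielding the desired bound $\exp(-N(V_{\widetilde{\partial O}}-\eta))$.

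There is essentially no genuine obstacle here: the real work has been done in Lemma~\ref{esti1} (construction of the connecting trajectories, control of the hitting probabilities on $G^1_j$) and in Lemma~\ref{remark1} (the identification of $V_{\widetilde{\partial O}}$ as a minimum of $\widetilde V_{\bar O}(K_0,K_i)$ under the characteristic-boundary hypothesis). The only subtlety is bookkeeping for the constants: one must first choose $\eta'=\eta/2$ in Lemma~\ref{esti1}, then choose $\rho_0,\rho_2,\rho_1$ accordingly, and finally enlarge $N$ to absorb the factor $M$ into $e^{N\eta/2}$; since $M$ is a fixed finite number this is harmless.
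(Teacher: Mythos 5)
Your proof is correct and follows essentially the same route as the paper: apply Lemma~\ref{esti1} with $i=0$ and the improved constant $\eta/2$, combine the estimates over $j=1,\dots,M$, invoke Lemma~\ref{remark1} to identify $\min_j \widetilde V_{\bar O}(K_0,K_j)$ with $V_{\widetilde{\partial O}}$, and enlarge $N$ to absorb the factor $M$. The only cosmetic difference is that the paper writes $\mathbb{P}_z(\tilde Z_1\in\bigcup_i G^1_i)$ as an exact sum of disjoint events and bounds that sum on both sides, whereas you use monotonicity for the lower bound (single minimizing index) and a union bound for the upper bound, which is an equivalent bookkeeping choice.
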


\begin{proof}
We have
\begin{equation*}
  \mathbb{P}_{z}\left(\tilde{Z}_{1}\in \bigcup_{i=1}^{M}G^{1}_{i}\right)=\sum_{i=1}^{M}\mathbb{P}_{z}\left(\tilde{Z}_{1}\in G^{1}_{i}\right)
\end{equation*}
then we deduce from Lemma \ref{esti1} that $0<\rho_1<\rho_2<\rho_0$ can be chosen in such a way that $\forall x\in G^{1}_{0}$,
\begin{align*}
  &\exp\{-N\eta/2\}\sum_{i=1}^{M}\exp\{-NV_{\bar{O}}(K_{0},K_{i})\}\leq \mathbb{P}_{z}\left(\tilde{Z}_{1}\in \bigcup_{i=1}^{M}G^{1}_{i}\right), \\
  &\mathbb{P}_{z}\left(\tilde{Z}_{1}\in \bigcup_{i=1}^{M}G^{1}_{i}\right)\leq \exp\{N\eta/2\}\sum_{i=1}^{M}\exp\{-NV_{\bar{O}}(K_{0},K_{i})\}. 
 \end{align*} 
 Moreover it is easy to see that 
 \[ \exp\{-N\min_{i}V_{\bar{O}}(K_{0},K_{i})\}\le\sum_{i=1}^{M}\exp\{-NV_{\bar{O}}(K_{0},K_{i})\}\le M\exp\{-N\min_{i}V_{\bar{O}}(K_{0},K_{i})\}.\]
Hence $\forall x\in G^{1}_{0}$,
\begin{align*}
 & \exp\Big\{-N\Big(\min_{i}V_{\bar{O}}(K_{0},K_{i})+\eta/2\Big)\Big\}\leq \mathbb{P}_{z}\left(\tilde{Z}_{1}\in \bigcup_{i=1}^{M}G^{1}_{i}\right), \\
 &\mathbb{P}_{z}\left(\tilde{Z}_{1}\in \bigcup_{i=1}^{M}G^{1}_{i}\right)\leq \exp\Big\{-N\Big(\min_{i}V_{\bar{O}}(K_{0},K_{i})-\eta/2-\frac{1}{N}\log M\Big)\Big\}.
 \end{align*}
 The result now follows from Lemma \ref{remark1} if we choose $N$ such that 
\begin{equation*}
 N>\frac{2|\log(M)|}{\eta}.
\end{equation*}

\end{proof}

In the following Lemma, we establish that an exit to the characteristic boundary  $\widetilde{\partial O}$ is relatively likely once $\tilde{Z}^{N,z}(t)$ is close to it.

\begin{lemma}\label{lemma2.3}
Given $\eta>0$, there exist $0<\gamma<\eta$, $0<\rho<\eta$, and $N_{0}$ large enough so that whenever $\dist(z,\widetilde{\partial O})<\rho/3$ and $N>N_{0}$,
\begin{equation*}
\mathbb{P}_{z}\left(\tau_{O}^{N}<\gamma; \sup_{0\leq t\leq \gamma}|\tilde{Z}^{N}(t)-z|<\rho\right)\geq \exp(-N\eta).
\end{equation*}
\end{lemma}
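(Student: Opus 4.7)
The plan is to apply the large deviation lower bound of Theorem \ref{LDP}(a) to a trajectory that moves quickly from $z$ to the characteristic boundary and then follows the boundary flow at zero cost, and to combine this with a direct Poissonian estimate for the occurrence of a rejected jump (which is what triggers $\tau_O^N$).

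Given $\eta>0$, I fix $\rho,\gamma\in(0,\eta)$ small. For $z$ with $\dist(z,\widetilde{\partial O})<\rho/3$, pick $\bar y\in\widetilde{\partial O}$ with $|z-\bar y|<\rho/3$ and build $\phi\in D_{\gamma,\bar O}$ in two pieces. On $[0,T_1]$ with $T_1\le\gamma/2$, $\phi$ goes from $z$ to $\bar y$ inside $B_{\rho/2}(z)\cap\bar O$ via an adaptation of Lemma \ref{assumption1} (first to a point slightly inside $O$ near $\bar y$, then along a short segment to $\bar y$), at cost of order $\rho^{1-\nu}$. On $[T_1,\gamma]$, $\phi$ follows the ODE \eqref{ODE} starting from $\bar y$; by the characteristic condition \eqref{condboundcarac} this solution remains in $\widetilde{\partial O}$ and contributes zero to the rate function. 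Choosing $\rho$ and $\gamma$ small enough yields $I_\gamma(\phi)\le\eta/2$ while also keeping $\phi$ inside $B_{\rho/2}(z)$. Applying Theorem \ref{LDP}(a) with any fixed $\delta<\rho/2$ then gives, for $N$ large,
\[
\mathbb{P}_z\bigl(\|\tilde Z^N-\phi\|_\gamma<\delta\bigr)\ \ge\ \exp\{-N(I_\gamma(\phi)+\eta/4)\}\ \ge\ \exp(-3N\eta/4),
\]
and every sample path in this event satisfies $\sup_{t\le\gamma}|\tilde Z^N(t)-z|<\rho$.

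To secure $\tau_O^N<\gamma$ on the approach event with high conditional probability, I use Assumption \ref{assump2}.\ref{assump20} to select $j_0$ with $\langle h_{j_0},n(\bar y)\rangle>0$; by continuity this remains positive in a neighborhood, and $\beta_{j_0}(\cdot)$ is bounded below there, so a jump of type $j_0$ while $\tilde Z^N$ sits at a discrete boundary-adjacent state of $A^{(N)}\cap\bar O$ is rejected and realizes $\tau_O^N$. During $[T_1,\gamma]$ the process is confined to a neighborhood of $\widetilde{\partial O}$ and performs $O(N)$ jump attempts; a standard computation shows that the conditional probability that no such rejection occurs is exponentially small in $N$, and hence is at least $1/2$ once $N$ is large. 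Multiplying the two estimates yields
\[
\mathbb{P}_z\bigl(\tau_O^N<\gamma;\ \sup_{t\le\gamma}|\tilde Z^N(t)-z|<\rho\bigr)\ \ge\ \tfrac{1}{2}\exp(-3N\eta/4)\ \ge\ \exp(-N\eta)
\]
for $N$ large enough, which is the desired bound.

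The main obstacle is this final rejected-jump step. Theorem \ref{LDP}(a) only controls $\tilde Z^N$ at the fixed continuous scale $\delta$, whereas a rejected jump of size $|h_{j_0}|/N$ requires the discrete process to occupy a state at distance $O(1/N)\ll\delta$ from $\widetilde{\partial O}$. One must therefore argue separately, by a pigeonhole or local recurrence argument on the discrete Markov chain restricted to the $\delta$-neighborhood of $\bar y$, that during a time window of order one the process visits the discrete boundary layer of $A^{(N)}$ often enough for a type-$j_0$ rejection to occur with probability tending to one.
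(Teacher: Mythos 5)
Your construction of the approach path and the application of Theorem \ref{LDP}(a) are fine, but the step you yourself flag as the ``main obstacle'' is a genuine gap, not a technicality. Theorem \ref{LDP} for the reflected process $\tilde Z^N$ only sees events at a fixed $\delta$-scale; it can never force a rejected jump, which is an event at scale $O(1/N)$. The pigeonhole/local-recurrence argument you sketch for the discrete boundary layer would have to be built from scratch, and there is no reason it would be ``standard'' — in particular the rates $\beta_j$ can degenerate near $\widetilde{\partial O}$, so controlling the number of attempted boundary-crossing jumps in a fixed time window is not routine.

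The paper avoids the issue entirely by switching to the \emph{unreflected} process $Z^N$ of \eqref{EqPoisson1}, which (under the natural coupling by the same Poisson clocks) agrees path-wise with $\tilde Z^N$ on $[0,\tau^N_O)$, since no jump has been suppressed before $\tau^N_O$. Consequently $\{\tau^N_O<\gamma\}$ coincides with the event that $Z^N$ exits $\bar O$ before time $\gamma$. One then works with the open set
\begin{equation*}
G=\big\{\phi\in D_{\gamma,A}:\ \dist(\phi_0,\widetilde{\partial O})<\rho/3,\ \sup_{0\le t\le\gamma}|\phi_t-\phi_0|<\rho/2,\ \inf\{t:\phi_t\in\bar O^c\}<\gamma\big\},
\end{equation*}
which lives in $D_{\gamma,A}$, not $D_{\gamma,\bar O}$. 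By a variant of the Lemma \ref{assumption1} construction one gets $\inf_{\phi\in G}I_\gamma(\phi)<\eta/4$ for $\rho,\gamma$ small, and the LDP lower bound for $Z^N$ on the full state space yields
$\mathbb{P}_z(\tau^N_O<\gamma,\ \sup_{0\le t<\tau^N_O}|\tilde Z^N(t)-z|<\rho/2)\ge\mathbb{P}_z(Z^N\in G)\ge e^{-N\eta/2}$.
A strong Markov argument at $\tau^N_O$, using Theorem \ref{LDP}(a) and Lemma \ref{assumption1} for $\tilde Z^N$, then controls $\sup_{\tau^N_O\le t\le\gamma}|\tilde Z^N(t)-z|<\rho$ at a further cost of at most $e^{-N\eta/2}$, giving the bound $e^{-N\eta}$. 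The key idea you are missing is precisely this change of process: the rejection is seen as an \emph{exit of $Z^N$ from $\bar O$}, an event of positive LDP lower bound at the macroscopic scale, rather than as a microscopic discrete event for $\tilde Z^N$.
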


\begin{proof}
Let 
\begin{equation*}
G=\big\{\phi\in D_{\gamma,A}: \dist(\phi_{0},\widetilde{\partial O})<\rho/3, \sup_{0\leq t\leq \gamma}|\phi_{t}-\phi_{0}|<\rho/2\quad\text{and}\quad\inf\{t: \phi_{t}\in\bar{O}^{c}\}<\gamma\big\}.
\end{equation*}
 $G$ is  open for the Skorohod topology and as $Z^{N,z}$ satisfies the large deviation principle, we deduce that for all $z\in O$ such that $\dist(z,\widetilde{\partial O})<\rho/3$ we have for $N$ large enough
\begin{equation*}
\mathbb{P}_{z}(Z^{N}\in G)\geq \exp\{-N(\inf_{\phi\in G}I_{\gamma}(\phi)+\eta/4)\}.
\end{equation*}
Moreover from Lemma \ref{assumption1} we can choose $\gamma, \rho<\eta$ such that $\inf_{\phi\in G}I_{\gamma}(\phi)<\eta/4$.
Consequently
\begin{equation*}
\mathbb{P}_{z}(Z^{N}\in G)\geq \exp\{-N\eta/2\}.
\end{equation*}
We also have for all $z\in O$ with $\dist(z,\widetilde{\partial O})<\rho/3$.
\begin{align}\label{aligne1}
    &\mathbb{P}_{z}\Big(\tau^{N}_{O}<\gamma, \sup_{0\leq t\leq \gamma}|\tilde{Z}^{N}(t)-z|<\rho\Big) \nonumber \\
    &\geq\mathbb{P}_{z}\Big(\tau^{N}_{O}<\gamma, \sup_{\tau^{N}_{O}\leq t\leq \gamma}|\tilde{Z}^{N}(t)-z|<\rho, \sup_{0\leq t<\tau^{N}_{O}}|\tilde{Z}^{N}(t)-z|<\rho/2\Big) \nonumber\\
    &=\mathbb{P}_{z}\Big(\sup_{\tau^{N}_{O}\leq t\leq \gamma} |\tilde{Z}^{N}(t)-z|<\rho \Big|\tau^{N}_{O}<\gamma, \sup_{0\leq t<\tau^{N}_{O}}|\tilde{Z}^{N}(t)-z|<\rho/2\Big) \nonumber\\
    &\times\mathbb{P}_{z}\Big(\tau^{N}_{O}<\gamma, \sup_{0\leq t<\tau^{N}_{O}}|\tilde{Z}^{N}(t)-z|<\rho/2\Big).   
\end{align}
Moreover we have
\begin{align}\label{aligne2}
    \mathbb{P}_{z}\Big(\tau^{N}_{O}<\gamma, \sup_{0\leq t< \tau^{N}_{O}}|\tilde{Z}^{N}(t)-z|<\rho/2\Big)&=\mathbb{P}_{z}\Big(\tau^{N}_{O}<\gamma, \sup_{0\leq t< \tau^{N}_{O}}|Z^{N}(t)-z|<\rho/2\Big) \nonumber  \\
    &\geq \mathbb{P}_{z}(Z^{N}\in G)\geq \exp\{-N\eta/2\}.
\end{align}
Using the strong Markov property, we also have
\begin{align}\label{aligne3}
  & \mathbb{P}_{z}\Big(\sup_{\tau^{N}_{O}\leq t\leq \gamma}|\tilde{Z}^{N}(t)-z|<\rho \Big|\tau^{N}_{O}<\gamma, \sup_{0\leq t<\tau^{N}_{O}}|\tilde{Z}^{N}(t)-z|<\rho/2\Big)  \nonumber  \\
  &= \mathbb{E}_z\Big(\mathbb{P}_{\tilde{Z}^N(\tau^N_O)}\Big(\sup_{\tau^N_O\leq t\leq \gamma}|\tilde{Z}^{N}(t)-z|<\rho\Big)\Big|\tau^{N}_{O}<\gamma, \sup_{0\leq t<\tau^{N}_{O}}|\tilde{Z}^{N}(t)-z|<\rho/2\Big)  \nonumber  \\
    & \geq\inf_{y:|y-z|<\rho/2}\mathbb{P}_{y}\Big\{\sup_{0\leq t\leq \gamma}|\tilde{Z}^{N}(t)-z|<\rho\Big) \nonumber \\
    & \geq\inf_{y:|y-z|<\rho/2}\mathbb{P}_{y}\Big(\sup_{0\leq t\leq \gamma}|\tilde{Z}^{N}(t)-y|<\rho/2\Big) \nonumber \\
    & \geq \exp\{-N\eta/2\}, 
\end{align}
where the last inequality follows from Theorem \ref{LDP} and Lemma \ref{assumption1}.
Combining \eqref{aligne1}, \eqref{aligne2} and \eqref{aligne3} we have the result.
\end{proof}

 In the next lemmas we denote
\begin{equation*}
O_{\epsilon}:=\{z\in O: \dist(z, \widetilde{\partial O})>\epsilon\}
\end{equation*}
 and we establish some inequalities involving $\theta_{\kappa_{1}}$. 

\begin{lemma}\label{lemma2.4}
For all  $\eta, \rho_{0}>0$, $y\in \widetilde{\partial O}$, and $z\in O\setminus \bigcup_{0}^{M}B_{\rho_{0}}(K_{i})$, there exist $\delta_{0}$, $\rho_{2}$ and $N_{0}$ so that for all $\rho_{1}<\rho_{2}$, for all $\delta<\delta_{0}$ and $N>N_{0}$
\begin{equation*}
\mathbb{P}_{z}\left(\tau^{N}_{O}<\theta_{\kappa_{1}}; |\tilde{Z}^{N}(\tau^{N}_{O})-y|<\delta\right)\geq \exp\big(-N(V_{\bar{O},K_{0}^{c}}(z,y)+\eta)\big)
\end{equation*}
\end{lemma}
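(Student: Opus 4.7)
The proof concatenates two pieces via the strong Markov property: a large-deviations tube around a nearly-optimal curve from $z$ to $y$ avoiding $K_0=\{z^\ast\}$, followed by the local exit near $y$ delivered by Lemma \ref{lemma2.3}. Assume $V_{\bar O,K_0^c}(z,y)<\infty$ (otherwise the bound is trivial), and pick $\phi\in\mathcal{AC}_{T,\bar O}$ with $\phi_0=z$, $\phi_T=y$, $\phi_t\neq z^\ast$ for every $t\in(0,T)$, and $I_T(\phi)\le V_{\bar O,K_0^c}(z,y)+\eta/3$. Setting $d:=\min_{t\in[0,T]}|\phi_t-z^\ast|>0$ and $d_y:=|y-z^\ast|\ge d>0$, take $\rho_2:=d/3$, so that for any $\rho_1<\rho_2$ the set $G_0^1=\overline{B_{\rho_1}(z^\ast)}$ stays at distance at least $2d/3$ from the image of $\phi$.

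Fix $\delta_0:=\min(d/3,\,d_y/2,\,2\eta/3)$ and let $\delta\in(0,\delta_0)$. Invoke Lemma \ref{lemma2.3} with parameter $\eta/3$: inspection of its proof (it relies on Lemma \ref{assumption1}, whose cost $\mathcal K(T(\rho))$ tends to $0$ as $\rho\to0$) shows that the value $\rho$ it supplies may be taken as small as one wishes, so I pick $\rho=\delta/2$, with an associated $\gamma>0$. Set $\delta':=\delta/10$, so that $\delta'<\rho/3$ and $\rho+\delta'<\delta$, and define
\[A_1:=\{\|\tilde Z^N-\phi\|_T<\delta'\}.\]
By Theorem \ref{LDP}(a), for $N$ beyond some threshold depending on $\delta,\phi$ and $\eta$,
\[\mathbb P_z(A_1)\ge\exp\{-N(I_T(\phi)+\eta/3)\}\ge\exp\{-N(V_{\bar O,K_0^c}(z,y)+2\eta/3)\}.\]
On $A_1$ the trajectory stays within $\delta'<d/3$ of $\phi$ throughout $[0,T]$, hence avoids $G_0^1$, so $\theta_{\kappa_1}>T$; moreover $|\tilde Z^N(T)-y|<\delta'$ and $y\in\widetilde{\partial O}$ give $\dist(\tilde Z^N(T),\widetilde{\partial O})<\delta'<\rho/3$, which is exactly the hypothesis needed to apply Lemma \ref{lemma2.3} to the process restarted at time $T$.

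Using the strong Markov property at $T$, the event
\[A_2:=\bigl\{\tau_O^N<T+\gamma,\ \sup_{T\le t\le T+\gamma}|\tilde Z^N(t)-\tilde Z^N(T)|<\rho\bigr\}\]
satisfies $\mathbb P_z(A_2\mid\mathcal F_T)\ge\exp(-N\eta/3)$ on $A_1$. On $A_1\cap A_2$ the triangle inequality yields $|\tilde Z^N(\tau_O^N)-y|\le\rho+\delta'<\delta$, and during $[T,\tau_O^N]$ the process stays in $B_\rho(y)$, which is disjoint from $G_0^1$ since $\rho<d_y/2$ and $\rho_1<d/3\le d_y/3$; hence $\theta_{\kappa_1}>\tau_O^N$. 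Multiplying the two probability estimates gives exactly the claimed lower bound $\exp\{-N(V_{\bar O,K_0^c}(z,y)+\eta)\}$.

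The main difficulty is not any single calculation but the coordinated juggling of parameters: one must shrink $\rho$ below $\delta/2$ so that the final exit lands in $B_\delta(y)$ and the $\rho$-ball around $y$ stays clear of $G_0^1$, yet $\rho$ and $\delta'$ must not become so small that the LDP or Lemma \ref{lemma2.3} cost overruns the $\eta/3$ budget. What makes this consistent is the fact, extracted from the proof of Lemma \ref{lemma2.3} together with Lemma \ref{assumption1}, that $\rho$ there can be chosen arbitrarily small without degrading its $\exp(-N\eta)$ lower bound.
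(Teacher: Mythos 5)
Your overall architecture (nearly-optimal curve avoiding $K_0$, large-deviations tube, then a Lemma \ref{lemma2.3} exit near $y$ glued via the strong Markov property) is the same as the paper's, and most of the parameter bookkeeping is sound. But there is a genuine gap in the tube step.

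You choose $\phi$ only so that $\phi_t\in\bar O\setminus K_0$ and $I_T(\phi)\le V_{\bar O,K_0^c}(z,y)+\eta/3$. Nothing in the definition of $V_{\bar O,K_0^c}$ prevents such a nearly-optimal $\phi$ from touching or running along the characteristic boundary $\widetilde{\partial O}$ at times $t<T$ far from $y$. On the tube event $A_1=\{\|\tilde Z^N-\phi\|_T<\delta'\}$ this means $\tau_O^N$ can very well be realized before time $T$, at a point of $\widetilde{\partial O}$ that is not within $\delta$ of $y$. In that case the triangle inequality $|\tilde Z^N(\tau_O^N)-y|\le\rho+\delta'$ is simply not available — it relies on $T\le\tau_O^N< T+\gamma$ so that $\tilde Z^N(\tau_O^N)\in B_\rho(\tilde Z^N(T))$, which is false when $\tau_O^N<T$. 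Hence the claimed inclusion $A_1\cap A_2\subset\{\tau_O^N<\theta_{\kappa_1};\ |\tilde Z^N(\tau_O^N)-y|<\delta\}$ fails, and the event you have lower-bounded is not the one in the lemma. This is not a low-probability nuisance: if $\phi$ grazes the boundary, the event $A_1\cap\{\tau_O^N<T\}$ can carry essentially all of the mass of $A_1$.

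The paper closes exactly this hole in two coordinated ways that your proof omits. First, it replaces $\psi$ by $\psi^a_t=(1-a)\psi_t+az_0$, which by Assumption \ref{assump2}.\ref{assump22} stays at distance $\geq c_2a$ from $\widetilde{\partial O}$ while changing the cost by at most $\eta/12$; short connectors from $z$ to $\psi^a_0$ and from $\psi^a_{\bar T}$ to $y$ come from Lemma \ref{assumption1}. Second, the lower bound is applied not to a tube around $\varphi$ but to an \emph{open set} $H$ of trajectories that by construction avoid $(\bar O\setminus O_{\delta/2})\setminus B_{\delta/2}(y)$ and $\overline{B_{\rho_1}(z^*)}$, so that on $\{\tilde Z^N\in H\}$ any pre-$T$ exit attempt is automatically forced into $B_{\delta/2}(y)$, i.e.\ $|\tilde Z^N(\tau_O^N\wedge T)-y|<\delta/2$. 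The subsequent decomposition $\Lambda\ge\Lambda_1+\Lambda_2+\Lambda_3$ then treats $\tau_O^N>T$, $\tau_O^N\le T$, and $\theta_{\kappa_1}\le T$ separately, using Lemma \ref{lemma2.3} only for $\Lambda_1$. You should import the $\psi^a$-pushing step and either adopt the open set $H$ (with the implied $|\tilde Z^N(\tau_O^N\wedge T)-y|<\delta/2$ consequence) or else argue directly that your tube radius $\delta'$ is smaller than the distance from $\phi_{[0,T]}\setminus B_{\delta/2}(y)$ to $\widetilde{\partial O}$, so that $\tau_O^N\ge T$ on $A_1$ except when $\tilde Z^N$ is already in $B_\delta(y)$. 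As written, the proof does not establish the lemma.

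One secondary point: your use of ``inspection of the proof of Lemma \ref{lemma2.3} shows $\rho$ may be taken arbitrarily small'' is correct in substance (it comes from $\mathcal K(T(\rho))\to0$), but it is an unstated strengthening of the lemma as formulated and should be flagged or re-proved rather than invoked silently.
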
 

\begin{proof}
First, by definition of $V_{\bar{O},K_{0}^{c}}(z,y)$, there exist $\bar{T}$ and a curve $\psi_{t}$, $0\leq t\leq \bar{T}$  with $\psi_{0}=z$, $\psi_{\bar{T}}=y$ and $\psi_{t}\in \bar{O}\setminus K_{0}$ for all $0\leq t\leq \bar{T}$ such that
\begin{equation*}
  I_{\bar{T}}(\psi)\leq V_{K_{0}^{c}}(z,y)+\frac{\eta}{12}.
\end{equation*}
We can next choose $a$ such that $\psi^{a}_{t}$, $0\leq t\leq \bar{T}$ defined by 
\begin{equation*}
\psi^{a}_{t}=(1-a)\psi_{t}+a z_{0}
\end{equation*} 
satisfies $\dist(\psi^{a}_{t},\widetilde{\partial{O}})\geq c_{2}a$ and
\begin{equation*}
  I_{\bar{T}}(\psi^{a})\leq I_{\bar{T}}(\psi)+\frac{\eta}{12}.
\end{equation*}
According to the Lemma \ref{assumption1}, there exist two functions $\psi^{a,1}_{t}$, $0\leq t\leq T^{a,1}$ and $\psi^{a,2}_{t}$, $0\leq t\leq T^{a,2}$ such that
$\psi^{a,1}_{0}=z$, $\psi^{a,1}_{T^{a,1}}=\psi^{a}_{0}$, $\psi^{a,2}_{0}=\psi^{a}_{\bar{T}}$, $\psi^{a,2}_{T^{a,2}}=y$ and
\begin{equation*}
I_{T^{a,1}}(\psi^{a,1})\leq\frac{\eta}{12}\quad\text{and}\quad I_{T^{a,2}}(\psi^{a,2})\leq\frac{\eta}{12}.
\end{equation*}
Combining $\psi^{a,1}_{t}$, $0\leq t\leq T^{a,1}$, $\psi^{a}_{t-T^{a,1}}$, $T^{a,1}\leq t\leq \bar{T}+T^{a,1}$ and $\psi^{a,2}_{t-\bar{T}-T^{a,1}}$, $\bar{T}+T^{a,1}\leq t\leq T^{a,2}+\bar{T}+T^{a,1}$ we obtain a function $\varphi_{t}$, $0\leq t\leq T=\bar{T}+T^{a,1}+T^{a,2}$
\begin{equation}\label{ineq3311}
 I_{T}(\varphi)\leq V_{K_{0}^{c}}(z,y)+\frac{\eta}{3}.
\end{equation}

 Let $\rho_{1}<\rho_{0}$, $\delta<\min\Big\{\frac{1}{2}c_{2} a,\frac{ \rho_0}{2}\Big\}$ and $H\subseteq D_{T,\bar{O}}$ be the set of functions $\phi$ having the following properties:
\begin{itemize}
  \item $|\phi_{T}-y|<\frac{1}{2}\delta$
  \item $\phi_{t}$ does not intersect $\big(\bar{O}\setminus O_{\frac{1}{2}\delta}\big) \setminus B_{\frac{1}{2}\delta}(y)$
  \item $\phi_{t}$ does not intersect $\overline{B_{\rho_{1}}(z^{*})}$
\end{itemize}
 $H$ is open and $\varphi\in H$ (if $\varphi$ intersects $\overline{B_{\rho_{1}}(z^{*})}$ we use again Lemma \ref{assumption1} to modify $\varphi$). From theorem \ref{LDP}, for all $N$ large enough we deduce by using \eqref{ineq3311} that
\begin{align*}
  \log\mathbb{P}_{z}(\tilde{Z}^{N}\in H)&\geq -N\Big(\inf_{\psi\in H, \psi_{0}=z}I_{T}(\psi)+\frac{\eta}{3}\Big)\\
  &\geq -N\Big(V_{K_{0}^{c}}(z,y)+\frac{2\eta}{3}\Big).
\end{align*}
Moreover we remark that $\tilde{Z}^{N,z}\in H$ implies $|\tilde{Z}^{N,z}(\tau^{N}_{O}\wedge T)-y|<\frac{1}{2}\delta$ and $T<\theta_{\kappa_{1}}$. So
\begin{equation}\label{inequal3}
\log\mathbb{P}_{z}(|\tilde{Z}^{N}(\tau^{N}_{O}\wedge T)-y|<\frac{1}{2}\delta, T<\theta_{\kappa_{1}})\geq -N\Big(V_{K_{0}^{c}}(z,y)+\frac{2\eta}{3}\Big).
\end{equation}
We also have
\begin{align*}
\Lambda&=\mathbb{P}_{z}\Big(|\tilde{Z}^{N}(\tau^{N}_{O})-y|<\delta, \theta_{\kappa_{1}}>\tau_{O}^{N}\Big)\\
&\geq \mathbb{P}_{z}\Big(|\tilde{Z}^{N}(\tau^{N}_{O}\wedge T)-y|<\frac{1}{2}\delta, |\tilde{Z}^{N}(\tau^{N}_{O})-y|<\delta, T<\tau^{N}_{O}<\theta_{\kappa_{1}}\Big) \\
&+\mathbb{P}_{z}\Big(|\tilde{Z}^{N}(\tau^{N}_{O}\wedge T)-y|<\frac{1}{2}\delta, |\tilde{Z}^{N}(\tau^{N}_{O})-y|<\delta,  \tau^{N}_{O}\leq T<\theta_{\kappa_{1}}\Big) \\
&+\mathbb{P}_{z}\Big(|\tilde{Z}^{N}(\tau^{N}_{O}\wedge T)-y|<\frac{1}{2}\delta, |\tilde{Z}^{N}(\tau^{N}_{O})-y|<\delta, \tau^{N}_{O}<\theta_{\kappa_{1}}\leq T\Big) \\
&\geq\Lambda_{1}+\Lambda_{2}+\Lambda_{3}.
\end{align*}
Further
\begin{align*}
  \Lambda_{1}
  &=\mathbb{E}_{z}\Big(\mathbb{P}_{z}\big(|\tilde{Z}^{N}( T)-y|<\frac{1}{2}\delta, \tau^{N}_{O}<\theta_{\kappa_{1}}, T<\theta_{\kappa_{1}}\wedge\tau^{N}_{O}, |\tilde{Z}^{N}(\tau^{N}_{O})-y|<\delta|\mathcal{F}_{T}\big)\Big)\\
    &=\mathbb{E}_{z}\Big(|\tilde{Z}^{N}( T)-y|<\frac{1}{2}\delta, T<\theta_{\kappa_{1}}\wedge\tau^{N}_{O}, \mathbb{P}_{\tilde{Z}^{N}(T)}\big(\tau^{N}_{O}<\theta_{\kappa_{1}},  |\tilde{Z}^{N}(\tau^{N}_{O})-y|<\delta\big)\Big).
  \end{align*}
Furthermore for any $x$ such that $\dist(x,y)<\delta/2$
\begin{equation*}
  \mathbb{P}_{x}\Big(|\tilde{Z}^{N}(\tau^{N}_{O})-y|<\delta, \tau_{O}^{N}<\theta_{\kappa_{1}}\Big)\geq\mathbb{P}_{x}\Big(\tau_{O}^{N}<\gamma, \sup_{0\leq t\leq \gamma}|\tilde{Z}^{N}(t)-y|\leq\delta\Big).
\end{equation*}
for all $\gamma>0$.  In particular with $\gamma$ selected as in Lemma \ref{lemma2.3},
we deduce that
\begin{equation*}
  \Lambda_{1}\geq\mathbb{P}_{z}\Big(|\tilde{Z}^{N}(T)-y|<\frac{1}{2}\delta, T<\theta_{\kappa_{1}}\wedge\tau^{N}_{O}\Big)\exp(-N\eta/3).
\end{equation*}
We also have
\begin{align*}
  \Lambda_{2}&=\mathbb{P}_{z}\Big(|\tilde{Z}^{N}( \tau^{N}_{O})-y|<\frac{1}{2}\delta, \tau^{N}_{O}\leq T<\theta_{\kappa_{1}}\Big) \\
  &\geq \mathbb{P}_{z}\Big(|\tilde{Z}^{N}(\tau^{N}_{O})-y|<\frac{1}{2}\delta, \tau^{N}_{O}\leq T<\theta_{\kappa_{1}}\Big)\exp(-N\eta/3),
\end{align*}
since
\begin{equation*}
  \Big\{|\tilde{Z}^{N,z}(\tau^{N}_{O})-y|<\frac{1}{2}\delta, \tau^{N}_{O}\leq T<\theta_{\kappa_{1}}\Big\}\subseteq\Big\{|\tilde{Z}^{N,z}(\tau^{N}_{O})-y|<\delta, \tau_{O}^{N}<\theta_{\kappa_{1}}\Big\},
\end{equation*}
and $\Lambda_{3}>0$. Thus
\begin{align}\label{inequal4}
  \Lambda&\geq\mathbb{P}_{z}\Big(|\tilde{Z}^{N}(\tau^{N}_{O}\wedge T)-y|<\frac{1}{2}\delta, T<\theta_{\kappa_{1}}\wedge\tau^{N}_{O}\Big) \exp(-N\eta/3) \nonumber\\
  &+\mathbb{P}_{z}\Big(|\tilde{Z}^{N}(\tau^{N}_{O}\wedge T)-y|<\frac{1}{2}\delta, \tau^{N}_{O}\leq T<\theta_{\kappa_{1}}\Big) \exp(-N\eta/3) \nonumber\\
  &\geq\mathbb{P}_{z}\Big(|\tilde{Z}^{N}(\tau^{N}_{O}\wedge T)-y|<\frac{1}{2}\delta, T<\theta_{\kappa_{1}}\Big)\exp(-N\eta/3).
\end{align}
It follows from \eqref{inequal3} and \eqref{inequal4} that
\begin{equation*}
\log\mathbb{P}_{z}(|\tilde{Z}^{N}(\tau^{N}_{O})-y|<\delta, \tau^{N}_{O}<\theta_{\kappa_{1}})\geq -N\Big(V_{K_{0}^{c}}(z,y)+\eta\Big).
\end{equation*}
The lemma follows.
\end{proof}

\begin{lemma}\label{lemma2.5}
For all $\eta>0$, $y\in\widetilde{\partial O}$, there exists $\delta_{0}>0$ and $\rho_{0}>0$ (which can be chosen arbitrarily small) such that for any $\rho_{1}$, $0<\rho_{1}<\rho_{0}$, for any $\delta<\delta_{0}$ there exists $N_{0}$ so that for all $N>N_{0}$ and any $z\in G^{1}_{0}$,
\begin{equation*}
\mathbb{P}_{z}\left(\tau^{N}_{O}<\theta_{\kappa_{1}}; |\tilde{Z}^{N}(\tau^{N}_{O})-y|<\delta\right)\geq \exp(-N(V_{\bar{O}}(z^{\ast},y)+\eta)).
\end{equation*}
\end{lemma}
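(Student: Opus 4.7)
The plan is to reduce to Lemma \ref{lemma2.4} by prepending a short, low-cost connector from $z\in G^1_0$ to a point $z'$ lying outside $\bigcup_{i=0}^M B_{\rho_0}(K_i)$ on a near-optimal trajectory from $z^*$ to $y$. More precisely, I first fix a path $\psi:[0,T_0]\to\bar{O}$ with $\psi_0=z^*$, $\psi_{T_0}=y$ and $I_{T_0}(\psi)\leq V_{\bar{O}}(z^*,y)+\eta/6$. Choosing $\rho_0$ smaller than $\tfrac{1}{3}\min_{i\neq j}\dist(K_i,K_j)$ and than $\tfrac{1}{3}|y-z^*|$, the path $\psi$ must exit $B_{2\rho_0}(z^*)$ at some first time $t_0$; I set $z':=\psi_{t_0}\in O\setminus\bigcup_{i=0}^M B_{\rho_0}(K_i)$. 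Applying the perturbation $\psi^a_t=(1-a)\psi_t+az_0$ of Assumption \ref{assump2}.\ref{assump22} to the restriction $\psi|_{[t_0,T_0]}$ (which pushes the path into the interior of $O$ and keeps it away from $z^*$), together with short endpoint connectors supplied by Lemma \ref{assumption1}, produces a trajectory from $z'$ to $y$ staying in $\bar{O}\setminus K_0$ with cost at most $V_{\bar{O}}(z^*,y)+\eta/4$. This witnesses $V_{\bar{O},K_0^c}(z',y)\leq V_{\bar{O}}(z^*,y)+\eta/4$, and Lemma \ref{lemma2.4} then gives, for appropriate $\delta$, $\rho_2$ and $N$ large,
\begin{equation*}
\mathbb{P}_{z'}\bigl(\tau^N_O<\theta_{\kappa_1},\ |\tilde{Z}^N(\tau^N_O)-y|<\delta\bigr)\geq\exp\bigl(-N(V_{\bar{O}}(z^*,y)+\eta/2)\bigr).
\end{equation*}

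Next, since $z\in G^1_0\subset\overline{B_{\rho_1}(z^*)}$ and $z'\in\partial B_{2\rho_0}(z^*)$ both lie in $B_{3\rho_0}(z^*)\cap\bar{O}$, Lemma \ref{assumption1} yields a connector $\xi$ from $z$ to $z'$ of duration at most $3C\rho_0$ and cost at most $\mathcal{K}(3C\rho_0)$, which can be made less than $\eta/4$ by taking $\rho_0$ small enough. By Theorem \ref{LDP}(a), the probability that $\tilde{Z}^N$ tracks $\xi$ within distance $\delta$ is at least $\exp(-N\eta/2)$, so that $\tilde{Z}^N$ reaches a $\delta$-neighborhood of $z'$ with at least that probability. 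Invoking the strong Markov property at this first hitting time and combining with the bound above gives the desired inequality $\exp(-N(V_{\bar{O}}(z^*,y)+\eta))$.

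The principal subtlety is the consistency of the stopping time $\theta_{\kappa_1}$ across the concatenation: while tracking $\xi$ the process may repeatedly enter and leave $G^1_0$ before it ever reaches $C$, so one must take $\rho_1\ll\rho_0$ and a $\delta$-tube around $\xi$ narrow enough to guarantee that, by the time $\tilde{Z}^N$ arrives near $z'\in C$, it has already entered $C$ (so that $\sigma_0$ has occurred), and then the bound from Lemma \ref{lemma2.4} at $z'$ controls the probability of exiting $O$ before the next visit to $G^1_0$, i.e.\ before $\theta_{\kappa_1}$. Fine-tuning the parameters $\rho_0$, $\rho_1$, $a$ and $\delta$ simultaneously so that all these pieces match up is the main technical step; once it is in place, the rest is just the concatenation argument already used in Lemma \ref{lemma2.4}.
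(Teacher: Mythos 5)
Your proposal takes a genuinely different route from the paper. The paper first applies the strong Markov property at $\sigma_0$ to reduce the lower bound to a starting point $v\in\Gamma_0$, then directly constructs (via a detour through $z^\ast$) a near-optimal trajectory $\psi$ from $v$ to $y$, defines an open set $G$ of paths avoiding $\overline{B_{\rho_1}(z^*)}$ and the boundary except near $y$, invokes Theorem~\ref{LDP}, and finally handles the exit with a decomposition $\Theta=\Theta_1+\Theta_2+\Theta_3$ controlled by Lemma~\ref{lemma2.3}. You instead push forward from $z$ to an intermediate point $z'$ on the near-optimal path and invoke Lemma~\ref{lemma2.4} at $z'$ as a black box. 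This is an attractive economy, since it reuses the already-proved lemma rather than repeating its entire construction; the price is exactly the stopping-time bookkeeping you flag in your last paragraph. The basic scheme is sound: if you choose the $\delta$-tube of the connector $\xi$ so that after its first entry into $C$ it stays in $C$ until it reaches $B_\delta(z')\subset C$, then indeed $\theta_{\kappa_1}$ from $z$ equals the hitting time of $B_\delta(z')$ plus $\theta_{\kappa_1}$ from the hit point, and the strong Markov decomposition you sketch goes through.

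Two points deserve to be made more precise. First, the parenthetical claim that the perturbation $\psi^a_t=(1-a)\psi_t+az_0$ ``keeps it away from $z^*$'' is not true in general: writing $\psi^a_t-z^*=(1-a)(\psi_t-z^*)+a(z_0-z^*)$, you see that if $\psi_t-z^*$ is a multiple $-\tfrac{a}{1-a}(z_0-z^*)$ of the opposite direction, then $\psi^a_t=z^*$ exactly; the perturbation pushes the path away from $\widetilde{\partial O}$, not away from $z^*$. Thus the claimed bound $V_{\bar{O},K_0^c}(z',y)\leq V_{\bar{O}}(z^*,y)+\eta/4$ requires an additional modification step (local surgery via Lemma~\ref{assumption1} wherever $\psi^a$ enters $\overline{B_{\rho_1}(z^*)}$). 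To be fair, the paper's own proof of Lemma~\ref{lemma2.5} hand-waves the same issue with its parenthetical ``if $\psi$ intersects $\overline{B_{\rho_1}(z^*)}$ we use again Lemma~\ref{assumption1} to modify it'', so you are at parity, but it is worth spelling out. Second, Lemma~\ref{lemma2.4} gives a bound for the process started at the exact point $z'$, whereas the strong Markov property lands you at some $w\in B_\delta(z')$. You need either a version of Lemma~\ref{lemma2.4} that is uniform in a small ball around $z'$ (which its proof supports, by prepending a short connector from $w$ to $\psi^a_0$), or a continuity estimate of the form $V_{\bar{O},K_0^c}(w,y)\leq V_{\bar{O},K_0^c}(z',y)+\eta/12$ for $w$ close to $z'$. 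Once these two steps are filled in, the argument closes.
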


\begin{proof}
Let $\rho_{1}<\rho_{0}$, $\forall z\in G^{1}_{0}$  
\begin{align*}
\mathbb{P}_{z}\big(\tau^{N}_{O}<\theta_{\kappa_{1}}; |\tilde{Z}^{N}(\tau^{N}_{O})-y|<\delta\big)&= \mathbb{E}_{z}\Big(\mathbb{P}_{z}\big(\tau^{N}_{O}<\theta_{\kappa_{1}}; |\tilde{Z}^{N}(\tau^{N}_{O})-y|<\delta\big|\mathcal{F}_{\sigma_{0}}\big)\Big)\\
&=\mathbb{E}_{z}\Big(\mathbb{P}_{\tilde{Z}^{N}(\sigma_{0})}\big(\tau^{N}_{O}<\theta_{\kappa_{1}}; |\tilde{Z}^{N}(\tau^{N}_{O})-y|<\delta\big)\Big) \\
&\geqslant\inf_{v\in\Gamma_{0}}\mathbb{P}_{v}\big(\tau^{N}_{O}<\theta_{\kappa_{1}}; |\tilde{Z}^{N}(\tau^{N}_{O})-y|<\delta\big)
\end{align*}

 By definition of the $V_{\bar{O}}(z^{*},y)$, there exists a curve $\varphi(t)$, $0\leq t\leq T_{1}$  with $\varphi_{0}=z^{*}$, $\varphi_{T_{1}}=y$ such that
\begin{equation*}
  I_{T_{1}}(\varphi)\leq V_{\bar{O}}(z^{*},y)+\frac{\eta}{24}.
\end{equation*}
Moreover by Lemma \ref{assumption1} $v\in\Gamma_0$ and $z^{*}$ can be connected by a curve $\tilde{\varphi_{t}}$, $0\leq t\leq T_{2}$ such that 
\begin{equation*}
I_{T_{1}}(\tilde{\varphi})<\frac{\eta}{24}.
\end{equation*}
Combining $\tilde{\varphi_{t}}$, $0\leq t\leq T_{2}$ and $\varphi(t)$, $0\leq t\leq T_{1}$, we construct a curve $\bar{\varphi}$ by
\begin{equation*}
\bar{\varphi}_{t}=\begin{cases}
    \tilde{\varphi}_{t}  & \text{if }\quad t\in[0, T_{2}] , \\
      \varphi_{t-T_{2}} & \text{if}\quad t\in[T_{2}, T_{1}+T_{2}].
\end{cases}
\end{equation*}
so that 
\begin{equation*}
I_{T_{1}+T_{2}}(\bar{\varphi})\leq  V_{\bar{O}}(z^{\ast},y)+\frac{\eta}{12}
\end{equation*}
For this function $\bar{\varphi}$ we can choose $a$ such that its corresponding curve $\bar{\varphi}^{a}$, $0\leq t\leq T_{3}=T_{1}+T_{2}$ defined by 
\begin{equation*}
\bar{\varphi}^{a}_{t}=(1-a)\bar{\varphi}_{t}+a z_{0}
\end{equation*} 
is such that $\dist(\bar{\varphi}^{a}_{t},\widetilde{\partial{O}})\geq c_{2}a$ and
\begin{equation*}
  I_{T_{3}}(\bar{\varphi}^{a})\leq I_{T_{3}}(\bar{\varphi})+\frac{\eta}{12}.
\end{equation*}
Thus
\begin{equation*}
I_{T_{3}}(\bar{\varphi}^{a})\leq V_{\bar{O}}(z^{\ast},y)+\frac{\eta}{6}
\end{equation*}
According to the Lemma \ref{assumption1}, there exist two functions $\varphi^{a,1}_{t}$, $0\leq t\leq T^{a,1}$ and $\varphi^{a,2}_{t}$, $0\leq t\leq T^{a,2}$ such that
$\varphi^{a,1}_{0}=v$, $\varphi^{a,1}_{T^{a,1}}=\varphi^{a}_{0}$, $\varphi^{a,2}_{0}=\varphi^{a}_{T_{3}}$, $\varphi^{a,2}_{T^{a,2}}=y$ and
\begin{equation*}
I_{T^{a,1}}(\varphi^{a,1})\leq\frac{\eta}{12}\quad\text{and}\quad I_{T^{a,2}}(\varphi^{a,2})\leq\frac{\eta}{12}.
\end{equation*}
Concatenating $\varphi^{a,1}_{t}$, $0\leq t\leq T^{a,1}$, $\bar{\varphi}^{a}_{t-T^{a,1}}$, $T^{a,1}\leq t\leq T_{3}+T^{a,1}$ and $\phi^{a,2}_{t-T_{3}-T^{a,1}}$, $T_{3}+T^{a,1}\leq t\leq T^{a,2}+T_{3}+T^{a,1}$, we obtain a function $\psi_{t}$, $0\leq t\leq T$ such that
\begin{equation}\label{IPsi}
 I_{T}(\psi)\leq V_{\bar O}(z^{\ast},y)+\frac{\eta}{3}.
\end{equation}

 Let $\delta\leq\min\Big\{\frac{1}{2}c_{2} a,\frac{ \rho_0}{2}\Big\}$ and define $G\subseteq D_{T,\bar{O}}$ to be the set of functions $\phi$ having the following properties:
\begin{itemize}
  \item $|\phi_{T}-y|<\frac{1}{2}\delta$
  \item $\phi_{t}$ does not intersect $\big(\bar{O}\setminus O_{\frac{1}{2}\delta}\big) \setminus B_{\frac{1}{2}\delta}(y)$
  \item $\phi_{t}$ does not intersect $\overline{B_{\rho_{1}}(z^{*})}$
\end{itemize}
$G$ is open and $\psi\in G$ (if $\psi$ intersect $\overline{B_{\rho_{1}}(z^{*})}$ we use again Lemma \ref{assumption1} to modify it) we deduce from Theorem \ref{LDP} and \eqref{IPsi} that for  large enough $N$
\begin{equation*}
  \log\mathbb{P}^{N}_{v}(G)\geq -N\Big(\inf_{\phi\in G, \phi_{0}=v}I_{T}(\varphi)+\frac{\eta}{3}\Big)\ge
  -N\left(V_{\bar O}(z^{\ast},y)+\frac{2\eta}{3}\right).
\end{equation*}
Moreover we remark that $\tilde{Z}^{N}\in G$ implies $|\tilde{Z}^{N}(\tau^{N}_{O}\wedge T)-y|<\frac{1}{2}\delta$ and $\theta_{\kappa_{1}}>T$. So for $N$ large enough,
\begin{equation}\label{inequal5}
\log\mathbb{P}_{v}(|\tilde{Z}^{N}(\tau^{N}_{O}\wedge T)-y|<\frac{1}{2}\delta, \theta_{\kappa_{1}}>T)\geq -N\Big(V_{\bar O}(z^{\ast},y)+\frac{2\eta}{3}\Big).
\end{equation}
 We have moreover
\begin{align*}
\Theta&=\mathbb{P}_{v}\Big(|\tilde{Z}^{N}(\tau^{N}_{O})-y|<\delta, \theta_{\kappa_{1}}>\tau_{O}^{N}\Big)\\
&\geq \mathbb{P}_{v}\Big(|\tilde{Z}^{N}(\tau^{N}_{O}\wedge T)-y|<\frac{1}{2}\delta, |\tilde{Z}^{N}(\tau^{N}_{O})-y|<\delta, \theta_{\kappa_{1}}>\tau^{N}_{O}>T\Big) \\
&+\mathbb{P}_{v}\Big(|\tilde{Z}^{N}(\tau^{N}_{O}\wedge T)-y|<\frac{1}{2}\delta, |\tilde{Z}^{N}(\tau^{N}_{O})-y|<\delta,  \theta_{\kappa_{1}}>T\geq\tau^{N}_{O}\Big) \\
&+\mathbb{P}_{v}\Big(|\tilde{Z}^{N}(\tau^{N}_{O}\wedge T)-y|<\frac{1}{2}\delta, |\tilde{Z}^{N}(\tau^{N}_{O})-y|<\delta, T\geq\theta_{\kappa_{1}}>\tau^{N}_{O}\Big) \\
&\geq\Theta_{1}+\Theta_{2}+\Theta_{3}.
\end{align*}
Further
\begin{align*}
  \Theta_{1}&=\mathbb{E}_{v}\Big(\mathbb{P}_{v}\big(|\tilde{Z}^{N}(\tau^{N}_{O}\wedge T)-y|<\frac{1}{2}\delta, \theta_{\kappa_{1}}>\tau^{N}_{O}>T, |\tilde{Z}^{N}(\tau^{N}_{O})-y|<\delta|\mathcal{F}_{T}\big)\Big)\\
  &=\mathbb{E}_{v}\Big(\mathbb{P}_{v}\big(|\tilde{Z}^{N}(\tau^{N}_{O}\wedge T)-y|<\frac{1}{2}\delta, \theta_{\kappa_{1}}>\tau^{N}_{O}, \theta_{\kappa_{1}}\wedge\tau^{N}_{O}>T, |\tilde{Z}^{N}(\tau^{N}_{O})-y|<\delta|\mathcal{F}_{T}\big)\Big)\\
  &=\mathbb{E}_{v}\Big(|\tilde{Z}^{N}(T)-y|<\frac{1}{2}\delta, \theta_{\kappa_{1}}\wedge\tau^{N}_{O}>T, \mathbb{P}_{\tilde{Z}^{N}(T)}\big( \theta_{\kappa_{1}}>\tau^{N}_{O},  |\tilde{Z}^{N}(\tau^{N}_{O})-y|<\delta\big)\Big).
  \end{align*}

 Furthermore for $x$ such that $\dist(x,y)<\delta/2$ 
\begin{equation*}
  \mathbb{P}_{x}\Big(|\tilde{Z}^{N}(\tau^{N}_{O})-y|<\delta, \theta_{\kappa_{1}}>\tau_{O}^{N}\Big)\geq\mathbb{P}_{x}\Big(\tau_{O}^{N}<\gamma, \sup_{0\leq t\leq \gamma}|\tilde{Z}^{N}(t)-y|\leq\delta\Big),
\end{equation*}
for all $\gamma>0$.  In particular with $\gamma$ selected as in Lemma \ref{lemma2.3}
and $\delta=\rho$, we deduce that
\begin{equation*}
  \Theta_{1}\geq\mathbb{P}_{v}\Big(|\tilde{Z}^{N}(\tau^{N}_{O}\wedge T)-y|<\frac{1}{2}\delta, \theta_{\kappa_{1}}\wedge\tau^{N}_{O}>T\Big)\exp(-N\eta/3).
\end{equation*}
We also have
\begin{align*}
  \Theta_{2}&=\mathbb{P}_{v}\Big(|\tilde{Z}^{N}(\tau^{N}_{O}\wedge T)-y|<\frac{1}{2}\delta, \theta_{\kappa_{1}}>T\geq\tau^{N}_{O}\Big)  \\
  &\geq \mathbb{P}_{v}\Big(|\tilde{Z}^{N}(\tau^{N}_{O}\wedge T)-y|<\frac{1}{2}\delta, \theta_{\kappa_{1}}>T\geq\tau^{N}_{O}\Big)\exp(-N\eta/3).
\end{align*}
Thus since $\Theta_{3}\ge0$,
\begin{align}\label{inequal6}
  \Theta&\geq\mathbb{P}_{v}\Big(|\tilde{Z}^{N}(\tau^{N}_{O}\wedge T)-y|<\frac{1}{2}\delta, \theta_{\kappa_{1}}\wedge\tau^{N}_{O}>T\Big) \exp(-N\eta/3) \nonumber\\
  &+\mathbb{P}_{v}\Big(|\tilde{Z}^{N}(\tau^{N}_{O}\wedge T)-y|<\frac{1}{2}\delta, \theta_{\kappa_{1}}>T\geq\tau^{N}_{O}\Big) \exp(-N\eta/3)  \nonumber\\
  &=\mathbb{P}_{v}\Big(|\tilde{Z}^{N}(\tau^{N}_{O}\wedge T)-y|<\frac{1}{2}\delta, \theta_{\kappa_{1}}>T\Big)\exp(-N\eta/3).
\end{align}
It follows from \eqref{inequal5} and \eqref{inequal6} that

\begin{equation*}
\log\mathbb{P}_{v}(|\tilde{Z}^{N}(\tau^{N}_{O})-y|<\delta, \theta_{\kappa_{1}}>\tau^{N}_{O})\geq -N\Big(V_{\bar O}(z^{\ast},y)+\eta\Big).
\end{equation*}
\end{proof}

\begin{lemma}\label{lemma2.6}
Given any $\eta$, there exists $\rho_{0}>0$ (which can be chosen arbitrarily small) such that for any $\rho_{2}\in(0,\rho_{0})$, there exists $\rho_{1}\in(0, \rho_{2})$  and $N_{\eta}$ such that for all $N>N_{\eta}$ and $z\in G^{1}_{0}$,
\begin{equation*}
\mathbb{P}_{z}\left(\tau^{N}_{O}<\theta_{\kappa_{1}}\right)\leq \exp(-N(V_{\widetilde{\partial O}}-\eta)).
\end{equation*}
\end{lemma}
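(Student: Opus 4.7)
The strategy is to exploit the decomposition
\[\{\tau^N_O<\theta_{\kappa_1}\}\subseteq\{\tau^N_O\le\theta_1\}\cup\Bigl\{\tilde Z^N(\theta_1)\in\bigcup_{i=1}^M G^1_i\Bigr\},\]
which is immediate from the definition of $\kappa_1$: on the complement of the second event one has $\tilde Z^N(\theta_1)\in G^1_0$, i.e. $\kappa_1=1$ and $\theta_{\kappa_1}=\theta_1$, contradicting $\tau^N_O<\theta_{\kappa_1}$ unless $\tau^N_O\le\theta_1$ already. The second event is controlled directly by Lemma \ref{lemma2.2} (upper bound), contributing at most $\exp(-N(V_{\widetilde{\partial O}}-\eta/2))$ for $N$ large.

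It remains to bound $\mathbb P_z(\tau^N_O\le\theta_1)$. Choosing $\rho_0$ small enough that $B_{\rho_0}(z^*)\subset\mathring O$ rules out any exit before time $\sigma_0$, while the bounded jump size $|h_j|/N$ places $\tilde Z^N(\sigma_0)$ in $\Gamma_0\subset C$ for $N$ large. Applying the strong Markov property at $\sigma_0$ and noting that the post-$\sigma_0$ version of $\theta_1$ coincides with the first exit time $\tau^N_K$ from $K=\bar O\setminus\bigcup_{i=0}^M G^1_i$, a compact containing no $\omega$-limit set of \eqref{ODE}, one obtains
\[\mathbb P_z(\tau^N_O\le\theta_1)\le\sup_{y\in\Gamma_0}\mathbb P_y(\tau^N_O\le\tau^N_K).\]
For any such $y$, the further split $\mathbb P_y(\tau^N_O\le\tau^N_K)\le\mathbb P_y(\tau^N_O\le T)+\mathbb P_y(\tau^N_K>T)$ allows Lemma \ref{lemecomp} to control the second term by $\exp(-Nc(T-T_0))$, which is negligible as soon as $T$ is chosen so that $c(T-T_0)\ge V_{\widetilde{\partial O}}$.

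The first term is handled via Theorem \ref{LDP}(c) applied to the closed set $F=\{\phi\in D_{T,\bar O}:\phi_0=y,\ \exists t\le T,\ \dist(\phi_t,\widetilde{\partial O})\le\epsilon\}$, which contains $\{\tau^N_O\le T\}$ once $N$ is large enough that $\max_j|h_j|/N<\epsilon$. Using the triangle inequality for $V_{\bar O}$, the identity $V_{\widetilde{\partial O}}=\inf_{w\in\widetilde{\partial O}}V_{\bar O}(z^*,w)$ from Lemma \ref{remark1}, and Lemma \ref{assumption1}, which furnishes both a path from $z^*$ to any $y\in B_{2\rho_0}(z^*)$ of cost at most $C(2\rho_0)^{1-\nu}$ and a short path from $\phi_{t^*}$ to the nearest point of $\widetilde{\partial O}$ of cost $O(\epsilon^{1-\nu})$, one obtains
\[\inf_{\phi\in F}I_T(\phi)\ge V_{\widetilde{\partial O}}-C(2\rho_0)^{1-\nu}-O(\epsilon^{1-\nu})\ge V_{\widetilde{\partial O}}-\eta/3\]
for $\rho_0$ and $\epsilon$ chosen small enough. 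Theorem \ref{LDP}(c) then yields $\mathbb P_y(\tau^N_O\le T)\le\exp(-N(V_{\widetilde{\partial O}}-\eta/2))$ for $N$ large, uniformly in $y\in\Gamma_0$.

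Combining everything gives $\mathbb P_z(\tau^N_O<\theta_{\kappa_1})\le 3\exp(-N(V_{\widetilde{\partial O}}-\eta/2))\le\exp(-N(V_{\widetilde{\partial O}}-\eta))$ for $N$ sufficiently large. The main technical obstacle is precisely the rate-function estimate of the preceding step: $V_{\widetilde{\partial O}}$ is naturally anchored at $z^*$, but the LDP must be applied from the shifted start $y\in\Gamma_0$, and keeping the transfer error below an arbitrarily small constant $\eta$ is exactly what forces the small-$\rho_0$, small-$\epsilon$ regime and the appeal to Lemma \ref{assumption1} through the triangle inequality for $V_{\bar O}$.
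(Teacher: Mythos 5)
Your proposal is correct but takes a genuinely different route at the first step. Where you decompose $\{\tau^N_O<\theta_{\kappa_1}\}\subseteq\{\tau^N_O\le\theta_1\}\cup\{\tilde Z_1\in\bigcup_{i\ge1}G^1_i\}$ and invoke Lemma \ref{lemma2.2} to kill the second term, the paper instead introduces the inner domain $O_\delta$ and its exit time $\tau^N_{O_\delta}<\tau^N_O$, then uses the observation that when $\rho_1<\delta$ every $G^1_i$ with $i\ge1$ lies outside $O_\delta$, so $\{\tau^N_{O_\delta}<\theta_{\kappa_1}\}$ collapses directly to $\{\tau^N_{O_\delta}<\theta_1\}$ and no second term ever appears. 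After that, both approaches apply the strong Markov property at $\sigma_0$ and combine Lemma \ref{lemecomp} with the upper-bound part of Theorem \ref{LDP} (you use the closed-set form (d), the paper uses the level-set form (c)); the substance is the same. Your appeal to Lemma \ref{lemma2.2} is a clean shortcut, at the price of one extra application of an earlier lemma; the paper's $O_\delta$ trick is self-contained.

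There is one genuine (though repairable) gap in the transfer estimate. To get $\inf_{\phi\in F}I_T(\phi)\ge V_{\widetilde{\partial O}}-\eta/3$ you invoke Lemma \ref{assumption1} to produce a cheap path from $\phi_{t^*}$ (at distance $\le\epsilon$ from $\widetilde{\partial O}$) to \emph{the nearest point of $\widetilde{\partial O}$ itself}, but Lemma \ref{assumption1} only applies when both endpoints lie in $B_\rho(x)\cap\mathring O$ with $x\in\mathring A$; a point of $\widetilde{\partial O}$ is never in $\mathring O$, and in the $SIRS$/$SIR$-with-demography cases $\widetilde{\partial O}\subset\partial A$, so the $x\in\mathring A$ hypothesis fails too. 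The paper avoids this by instead using the continuity of $V_{\bar O}(z^\ast,\cdot)$ on the compact $\bar O$ to conclude $V_{\widetilde{\partial O_\delta}}\ge V_{\widetilde{\partial O}}-\eta/4$ for $\delta$ small; you should replace your Lemma \ref{assumption1} step by this continuity argument (or prove the mild extension of Lemma \ref{assumption1} to $\bar O$-endpoints that the proof of that lemma in fact allows, since Assumption \ref{assump2}.\ref{assump22} is stated for $z\in\bar O$). Also note that the identity $V_{\widetilde{\partial O}}=\inf_{w\in\widetilde{\partial O}}V_{\bar O}(z^\ast,w)$ is just the definition of $V_{\widetilde{\partial O}}$, not the content of Lemma \ref{remark1}.
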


\begin{proof}
Let $\delta>0$, we define
\begin{equation*}
  \tau^{N}_{O_{\delta}}=\inf\{t>0: \tilde{Z}^{N,z}(t)\not\in O_{\delta}\}.
\end{equation*}
It is easy to see that $\tau^{N}_{O_{\delta}}<\tau^{N}_{O}$. Moreover if $\rho_{1}<\delta$, then $\tau^{N}_{O_{\delta}}<\theta_{\kappa_{1}}$ implies $\tau^{N}_{O_{\delta}}<\theta_{1}$. Thus $\forall z\in G^{1}_{0}$
\begin{equation*}
  \mathbb{P}_{z}(\tau^{N}_{O}<\theta_{\kappa_{1}})\leq\mathbb{P}_{z}(\tau^{N}_{O_{\delta}}<\theta_{\kappa_{1}})\leq\mathbb{P}_{z}(\tau^{N}_{O_{\delta}}<\theta_{1}).
\end{equation*}
Now we use the strong Markov property to write that $\forall z\in G^{1}_{0}$
\begin{equation*}
  \mathbb{P}_{z}(\tau^{N}_{O_{\delta}}<\theta_{1})=\mathbb{E}_{z}\Big(\mathbb{P}_{\tilde{Z}^{N}(\sigma_{0})}(\tau^{N}_{O_{\delta}}<\theta_{1})\Big)
\end{equation*}
we deduce that
\begin{equation*}
  \sup_{z\in G^{1}_{0}}\mathbb{P}_{z}(\tau^{N}_{O}<\theta_{\kappa_{1}})\leqslant\sup_{v\in\Gamma_{0}}\mathbb{P}_{v}(\tau^{N}_{O_{\delta}}<\theta_{1})
\end{equation*}
Now, we establish that we can choose $\rho_{0}$ and $\delta$ sufficiently small such that for all $v\in\Gamma_{0}$ we have,
\begin{equation*}
\mathbb{P}_{v}(\tau^{N}_{O_{\delta}}<\theta_{1})\leq\exp\{-N(V_{\widetilde{\partial O_{\delta}}}-\frac{2\eta}{3})\}.
\end{equation*}
Using Lemma \ref{assumption1} there exists $\rho>0$ such that $T(\rho)<\eta/3\mathcal{K}$. We take $\rho_{0}<\rho/2$, $\delta$ and $\gamma$ sufficiently small such that for any trajectory $\phi_{t}$, $0\leq t\leq T$ starting from $v\in\Gamma_{0}$ and touching $O_{\delta}\setminus O_{\delta+\gamma}$ we have
\begin{equation*}
I_{T}(\phi)\geq V_{\widetilde{\partial O_{\delta}}}-\frac{\eta}{4}.
\end{equation*}
Moreover, using Lemma \ref{lemecomp} there exists a constant $c$ and $T_{1}$ such that for all $N$ large enough, any $T>T_{1}$ and $v\in \overline{O_{\delta}\setminus G}$ where $G=\bigcup_{i=1}^{M}G^{1}_{i}$ we have
\begin{equation*}
\mathbb{P}_{v}(\tau^{N}_{\overline{O_{\delta}\setminus G}}>T)\leq\exp\{-N c(T-T_{1})\}.
\end{equation*}
Now if we take any trajectory $\tilde{Z}^{N,v}$, with $v\in\Gamma_{0}$, and which reaches $O\setminus O_{\delta}$ before going to $G$ either spends time $T$ without touching $O\setminus O_{\delta}$ (the event $\{\tau^{N}_{\overline{O_{\delta}\setminus G}}>T\}$ is realized) or reaches $O\setminus O_{\delta}$ before time $T$ and in this case the event  
\begin{equation*}
\Big\{\rho_{T}\Big(\tilde{Z}^{N,v},\Phi(V_{\widetilde{\partial O_{\delta}}}-\frac{\eta}{4})\-\Big)\geq \gamma\Big\} \text{ is realized}. 
\end{equation*}
Hence, for all $v\in\Gamma_{0}$ we have from Lemma \ref{lemecomp} and Theorem \ref{LDP} (c)
\begin{align*}
    \mathbb{P}_{v}(\tau^{N}_{O_{\delta}}<\theta_{1})&\leq \mathbb{P}_{v}(\tau^{N}_{\overline{O_{\delta}\setminus G}}>T)+ \mathbb{P}_{v}\big(\big\{\rho_{T}\big( \tilde{Z}^{N,v},\Phi(V_{\widetilde{\partial O_{\delta}}}-\frac{\eta}{4})\big)\geq \gamma\big\}\big) \\
    &\leq \exp\{-N c(T-T_{1})\}+\exp\{-N(V_{\widetilde{\partial O_{\delta}}}-\frac{\eta}{2})\} \\
    &\leq 2\exp\{-N(V_{\widetilde{\partial O_{\delta}}}-\frac{\eta}{2})\} \quad\text{taking $T$ large enough} \\
    &\leq \exp\{-N(V_{\widetilde{\partial O_{\delta}}}-\frac{3\eta}{4})\}  \quad\text{taking $N$ large enough such that $\frac{\ln(2)}{N}<\frac{\eta}{4}$}
\end{align*}
Moreover, $V_{\bar{O}}(z^{\ast},.)$ is continuous so if $\delta$ is sufficiently small then
\begin{equation*}
V_{\widetilde{\partial O_{\delta}}}\geq V_{\widetilde{\partial O}}-\frac{\eta}{4}.
\end{equation*}
So 
\begin{equation*}
\sup_{z\in G^{1}_{0}}\mathbb{P}_{z}(\tau^{N}_{O}<\theta_{\kappa_{1}})\leq\exp\{-N(V_{\widetilde{\partial O}}-\eta)\}.
\end{equation*}
This prove the lemma. 
\end{proof}

\section{Main Result on the Exit Position}\label{exitposition}

 Before to formulate the main result of our paper (Theorem \ref{mainresult1}), we introduce here a notion important  to understand the proof of that result.
\begin{definition}
Let $\mathcal{L}$ a subset of $\mathbb{N}$ and $W$ a subset of $\mathcal{L}$. A W-graph on $\mathcal{L}$ is an oriented graph which satisfies the following conditions
\begin{description}
  \item[(a)] 
  It consists of arrows $i\to j$, $i\neq j$ with $i\in \mathcal{L}\setminus W$ and $j\in \mathcal{L}$.
  \item[(b)]
  For all $i\in \mathcal{L}\setminus W$, there exists exactly one arrow such that $i$ is its initial point.
   \item[(c)] 
  For any $i\in \mathcal{L}\setminus W$ there exists a sequence of arrows leading from it to some point $j\in W$.
  \end{description}
\end{definition}

 We will denote by $Gr(W)$ the set of $W$-graphs and for $i\in \mathcal{L}\setminus W$, $j\in W$  we denote by $Gr_{i,j}(W)$ the set of $W$-graphs in which the sequence of arrows leading from $i$ into $W$ ends at $j$.
We can now show the analogue of Lemma 3.2 in \cite{day1990large}.

\begin{lemma}\label{lem6}
For all $y\in\widetilde{\partial O}$ and $\eta$, $\delta_{0}>0$ there exist $\rho$, $\delta<\delta_{0}$ and $N_{0}$, so that for all $z$ with $\dist(z,z^{*})<\rho$ and $N>N_{0}$, we have
\begin{equation}\label{lower2}
\exp(-N(V_{\bar{O}}(z^{*},y)-V_{\widetilde{\partial O}}+\eta))\leq\mathbb{P}_{z}(|\tilde{Z}^{N}(\tau^{N}_{O})-y|<\delta)
\end{equation}
and
\begin{equation}\label{upper2}
\mathbb{P}_{z}(|\tilde{Z}^{N}(\tau^{N}_{O})-y|<\delta)\leq\exp(-N(V_{\bar{O}}(z^{*},y)-V_{\widetilde{\partial O}}-\eta))
\end{equation}
\end{lemma}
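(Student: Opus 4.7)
The plan is to apply the strong Markov property at the sequence of returns $\theta_{\kappa_n}$ of $\tilde{Z}^N$ to $G^1_0$, and express $\mathbb{P}_z(|\tilde{Z}^N(\tau^N_O) - y| < \delta)$ as a renewal-type sum over excursions. Choose $\rho \leq \rho_1$, so that $z \in G^1_0 = \overline{B_{\rho_1}(z^\ast)}$, and for $v \in G^1_0$ set
\begin{equation*}
p_y(v) := \mathbb{P}_v\bigl(\tau^N_O < \theta_{\kappa_1},\ |\tilde{Z}^N(\tau^N_O) - y| < \delta\bigr), \qquad q(v) := \mathbb{P}_v(\tau^N_O < \theta_{\kappa_1}).
\end{equation*}
Since $\tilde{Z}^N(\theta_{\kappa_n}) \in G^1_0$ for each $n \geq 0$, iterating the strong Markov property at the $\theta_{\kappa_n}$ yields the sandwich bounds
\begin{equation*}
\frac{\inf_{v \in G^1_0} p_y(v)}{\sup_{v \in G^1_0} q(v)} \;\leq\; \mathbb{P}_z(|\tilde{Z}^N(\tau^N_O) - y| < \delta) \;\leq\; \frac{\sup_{v \in G^1_0} p_y(v)}{\inf_{v \in G^1_0} q(v)}.
\end{equation*}
It thus suffices to estimate the four quantities appearing here.

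The lower bound \eqref{lower2} is almost immediate: applying Lemma \ref{lemma2.5} with $\eta$ replaced by $\eta/2$ gives $\inf_v p_y(v) \geq \exp(-N(V_{\bar{O}}(z^\ast, y) + \eta/2))$, and applying Lemma \ref{lemma2.6} with the same parameter gives $\sup_v q(v) \leq \exp(-N(V_{\widetilde{\partial O}} - \eta/2))$. Their quotient is exactly $\exp(-N(V_{\bar{O}}(z^\ast, y) - V_{\widetilde{\partial O}} + \eta))$.

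For the upper bound \eqref{upper2}, the lower estimate on $\inf_v q(v)$ is obtained by picking, using continuity of $V_{\bar{O}}(z^\ast,\cdot)$ and compactness of $\widetilde{\partial O}$, a point $y^\star \in \widetilde{\partial O}$ with $V_{\bar{O}}(z^\ast, y^\star) \leq V_{\widetilde{\partial O}} + \eta/4$, and applying Lemma \ref{lemma2.5} at $y^\star$ with parameter $\eta/4$ and some small $\delta' \leq \delta$ to conclude $\inf_v q(v) \geq \exp(-N(V_{\widetilde{\partial O}} + \eta/2))$. The upper estimate on $\sup_v p_y(v)$ comes from applying Theorem \ref{LDP} (d) to the closed subset of $D_{T,\bar{O}}$ consisting of paths that visit $\overline{B_\delta(y)}$ by some time $t \leq T$. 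Continuity of $V_{\bar{O}}$ gives $\inf\{I_T(\phi): \phi_0 = v,\ \phi_t \in \overline{B_\delta(y)} \text{ for some } t \leq T\} \geq V_{\bar{O}}(z^\ast, y) - \eta/4$ once $\rho_1$ and $\delta$ are small enough, and the LDP yields $\sup_v p_y(v) \leq \exp(-N(V_{\bar{O}}(z^\ast, y) - \eta/2))$; the quotient is $\exp(-N(V_{\bar{O}}(z^\ast, y) - V_{\widetilde{\partial O}} - \eta))$.

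The delicate point is this last step: since $\theta_{\kappa_1}$ is a random stopping time not bounded by any deterministic $T$ a priori, the LDP cannot be applied directly to the event defining $p_y(v)$. One must first show that $\mathbb{P}_v(\tau^N_O \wedge \theta_{\kappa_1} > T) \leq \exp(-Nc(T - T_0))$ for some $c, T_0 > 0$ independent of $v \in G^1_0$ and of $N$ large. This is achieved by combining Lemma \ref{lemecomp} applied to the compact $\bar{O} \setminus \bigcup_{i=0}^M G^1_i$ (which contains no complete $\omega$-limit set of \eqref{ODE}) with iterated use of Lemma \ref{esti1} to bound the number of exponentially unlikely excursions through $G^1_i$, $i \geq 1$, that could delay the first return to $G^1_0$. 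Once the reduction to a deterministic horizon $T$ is made, the LDP upper bound applies and the estimate on $\sup_v p_y(v)$ follows, completing the proof.
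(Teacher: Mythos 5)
Your renewal decomposition via the strong Markov property at the $\theta_{\kappa_n}$, the sandwich inequality in terms of $\inf p_y/\sup q$ and $\sup p_y/\inf q$, and the treatment of the lower bound \eqref{lower2} via Lemmas \ref{lemma2.5} and \ref{lemma2.6} all match the paper's argument (which obtains the same sandwich implicitly through equation \eqref{maineq} and \eqref{pro1}). Your lower bound on $\inf_v q(v)$ via Lemma \ref{lemma2.5} applied at a near-optimal $y^\star$ is a small variation on the paper's route through Lemmas \ref{lemma2.3} and \ref{lemma2.2}, and it is fine.

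The genuine gap is in the upper bound on $\sup_v p_y(v)$, precisely at the ``delicate point'' you flag. Your proposed reduction to a deterministic horizon claims $\mathbb{P}_v(\tau^N_O\wedge\theta_{\kappa_1}>T)\leq \exp(-Nc(T-T_0))$ by combining Lemma \ref{lemecomp} with ``iterated use of Lemma \ref{esti1}''. This bound is false, and the cited lemmas do not yield it. The issue is that between the stopping times $\theta_n$ and $\sigma_n$ the process sits inside $B_{\rho_0}(K_\ell)$ for some $\ell\geq 1$, a neighborhood of an $\omega$-limit set on the characteristic boundary. Lemma \ref{lemecomp} only controls time spent in a compact \emph{not containing} an $\omega$-limit set, so it gives no bound on the length of these sojourns; and Lemma \ref{esti1} estimates the transition probabilities of the embedded chain $\tilde{Z}_n$ but is silent about the physical time $\theta_{n+1}-\theta_n$. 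In fact the reflected process can linger near $K_\ell$, $\ell\geq 1$, for arbitrarily long periods at negligible large-deviation cost, so the left-hand side does not decay uniformly in $N$. Since even a polynomial-in-$N$ time horizon would ruin the fixed-$T$ LDP argument (the set $D_{T,\bar{O}}$ and the rate $I_T$ both depend on $T$), your reduction to a deterministic horizon cannot be repaired along these lines. The paper sidesteps this entirely: it never controls the physical duration of $\theta_{\kappa_1}$. Instead it bounds $p_y(v)\leq \mathbb{P}_v(\tilde{Z}_n\in G^1_1 \text{ for some } 1\leq n<\kappa_1)$, a hitting probability for the embedded Markov chain $(\tilde{Z}_n)$, and then invokes the $W$-graph estimate (Lemma 3.3 of Chapter 6 of Freidlin--Wentzell) with $W=\{0,1\}$ and the transition estimates from Lemma \ref{esti1}, giving $\sup_v p_y(v)\leq \exp(-N(V_{\bar O}(z^\ast,y)-\eta/2))$ via the identity $\widetilde{V}_{\bar{O}}(K_0,K_1)\wedge\min_{\ell\geq 2}\{\widetilde{V}_{\bar{O}}(K_0,K_\ell)+V_{\bar{O},K_0^c}(K_\ell,K_1)\}=V_{\bar{O}}(z^\ast,y)$. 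This combinatorial argument is the ingredient your proposal is missing, and it is not a cosmetic alternative: without it the multi-excursion contribution to $p_y(v)$ is not controlled.
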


\begin{proof}
Let $y\in\widetilde{\partial O}$. We can always assume that $y\in\bigcup_{1}^{M}K_{i}$ else we add the compact $K=\{y\}$ in the list of the compacts and Assumption \ref{assump2} remains true since $V_{O}(y,y)=0$ and If $y\mathcal{R} u$ for some $u\neq y$, then Lemma \ref{lemequiva} implies that any $\omega$-limit point of \eqref{ODE} starting at $y$ is equivalent to $y$ and then $y$ was in a compact  $K_{i}$. 

 In what follows we assume that $y\in K_{1}$.
Let $\delta>0$. Using the strong Markov property we have, for all $z\in G^{1}_{0}$,
\begin{align}
  &\mathbb{P}_{z}(|\tilde{Z}^{N}(\tau^{N}_{O})-y|<\delta) \nonumber\\
  &= \sum_{k=0}^{\infty}\mathbb{P}_{z}(|\tilde{Z}^{N}(\tau^{N}_{O})-y|<\delta; \theta_{\kappa_{k}}\leq\tau^{N}_{O}<\theta_{\kappa_{k+1}})  \nonumber\\
  &= \sum_{k=0}^{\infty}\mathbb{E}_{z}\Big[\mathbb{P}_{z}(|\tilde{Z}^{N}(\tau^{N}_{O})-y|<\delta; \theta_{\kappa_{k}}\leq\tau^{N}_{O}<\theta_{\kappa_{k+1}}|\mathcal{F}_{\theta_{\kappa_{k}}})\Big]  \nonumber\\
  &= \sum_{k=0}^{\infty}\mathbb{E}_{z}\Big[\mathbb{P}_{z}(|\tilde{Z}^{N}(\tau^{N}_{O})-y|<\delta; \theta_{\kappa_{k}}\leq\tau^{N}_{O}<\theta_{\kappa_{k+1}}|Z^{N}(\theta_{\kappa_{k}}))\Big]  \nonumber\\   
  &= \sum_{k=0}^{\infty}\mathbb{E}_{z}\Big[\mathbb{P}_{\tilde{Z}^{N}(\theta_{\kappa_{k}})}(|\tilde{Z}^{N}(\tau^{N}_{O})-y|< \delta; \tau^{N}_{O}<\theta_{\kappa_{1}}); \theta_{\kappa_{k}}\leq\tau^{N}_{O}\Big]  \nonumber\\
   &= \sum_{k=0}^{\infty}\mathbb{E}_{z}\Big[\mathbb{P}_{\tilde{Z}^{N}(\theta_{\kappa_{k}})}(|\tilde{Z}^{N}(\tau^{N}_{O})-y|< \delta| \tau^{N}_{O}<\theta_{\kappa_{1}})\mathbb{P}_{\tilde{Z}^{N}(\theta_{\kappa_{k}})}(\tau^{N}_{O}<\theta_{\kappa_{1}}); \theta_{\kappa_{k}}\leq\tau^{N}_{O}\Big] \label{maineq}
\end{align}
Furthermore, $\forall z\in G^{1}_{0}$ we have
\begin{equation}\label{pro1}
\mathbb{P}_{z}(|\tilde{Z}^{N}(\tau^{N}_{O})-y|< \delta| \tau^{N}_{O}<\theta_{\kappa_{1}}) =\frac{\mathbb{P}_{z}(|\tilde{Z}^{N}(\tau^{N}_{O})-y|< \delta; \tau^{N}_{O}<\theta_{\kappa_{1}})}{\mathbb{P}_{z}( \tau^{N}_{O}<\theta_{\kappa_{1}})}
\end{equation}
Let us now deduce the lower bound \eqref{lower2} from \eqref{pro1}, \eqref{maineq} combined with Lemmas \ref{lemma2.5} and \ref{lemma2.6}.  Given $\eta$, $\delta>0$ pick $0<\rho_{1}<\rho_{0}$ satisfying both Lemmas with $\eta$ replaced by $\eta/2$.  For all $z\in G^{1}_{0}$  and  $N$ large enough, we have
\begin{align*}
  \frac{\mathbb{P}_{z}(|\tilde{Z}^{N}(\tau^{N}_{O})-y|< \delta; \tau^{N}_{O}<\theta_{\kappa_{1}})}{\mathbb{P}_{z}(\tau^{N}_{O}<\theta_{\kappa_{1}})} &\geq \frac{\exp(-N(V_{\bar{O}}(z^{*},y)+\eta/2))}{\exp(-N(V_{\tilde{\partial O}}-\eta/2))} \\
   &= \exp(-N(V_{\bar{O}}(z^{*},y)-V_{\tilde{\partial O}}+\eta))
\end{align*}
The lower bound \eqref{lower2} follows from this and \eqref{maineq}.

 For the upper bound \eqref{upper2}, we first obtain a lower bound of the denominator of \eqref{pro1} as follows: \begin{align*}
\mathbb{P}_{z}(\tau^{N}_{O}<\theta_{\kappa_{1}}) &= \mathbb{P}_{z}\left(\tilde{Z}_{1}\in \bigcup_{\ell=1}^{M} G^{1}_{\ell}; \tilde{Z}^{N}(t^-)+\frac{1}{N}\sum_{j=1}^{k}h_{j}\triangle Q_{j}^{N}(t)\not\in O\quad\text{for some}\quad t\in [\theta_{1},\theta_{2})\right) \\
   &= \mathbb{E}_{z}\left(\mathbb{P}_{\tilde{Z}_{1}}(\tau^{N}_{O}<\theta_{1}); \tilde{Z}_{1}\in \bigcup_{\ell=1}^{M} G^{1}_{\ell}\right)
\end{align*}
Now we use Lemma \ref{lemma2.3} to deduce that choosing $\gamma,\rho_{1}$ such that $0<\gamma<\eta$,  $0<\rho_{1}<\eta$ and $N_{0}\in\mathbb{N}$ we have for all $u\in\bigcup_{1}^{M} G^{1}_{\ell}$, $\dist(u,\widetilde{\partial O})<\rho_{1}/2$ and then for $N>N_{0}$,
\begin{align*}
  \mathbb{P}_{u}(\tau^{N}_{O}<\theta_{1})&\geq \mathbb{P}_{u}\Big(\tau_{O}^{N}<\gamma; \sup_{0\leq t\leq \gamma}|\tilde{Z}^{N}(t)-u|<\rho_{1}\Big) \\
  &\geq\exp(-N\eta).
\end{align*}
 Thus, for all $\rho_{1}$ sufficiently small, $N$ large enough and all $z\in G^{1}_{0}$,
\begin{equation}\label{pro2}
  \mathbb{P}_{z}(\tau^{N}_{O}<\theta_{\kappa_{1}})\geq\exp(-N\eta/4)\mathbb{P}_{z}\left( \tilde{Z}_{1}\in \bigcup_{1}^{M} G^{1}_{\ell}\right)
\end{equation}
By using the lemma \ref{lemma2.2} we have for all suitable small $\rho_{1}$, $\rho_{0}$, and $N$ large enough, and all $z\in G^{1}_{0}$,
\begin{equation}\label{pro3}
  \mathbb{P}_{z}\left( \tilde{Z}_{1}\in \bigcup_{\ell=1}^{M} G^{1}_{\ell}\right)\geq\exp\{-N(V_{\tilde{\partial O}}+\eta/4)\}
\end{equation}
We then deduce of \eqref{pro2} and \eqref{pro3} that
\begin{equation}\label{pro4}
  \mathbb{P}_{z}(\tau^{N}_{O}<\theta_{\kappa_{1}})\geq\exp(-N(V_{\tilde{\partial O}}+\eta/2)).
\end{equation}
 We now use the embedded chain $\tilde{Z}_{n}$ to obtain a upper bound of the numerator of \eqref{pro1} in the following way: given $\delta<\rho_{1}$ and $z\in G^{1}_{0}$,
\begin{align}
  \mathbb{P}_{z}(|\tilde{Z}^{N}(\tau^{N}_{O})-y|< \delta; \tau^{N}_{O}<\theta_{\kappa_{1}}) &\leq \mathbb{P}_{z}(\tilde{Z}^{N}(\tau^{N}_{O})\in G^{1}_{1}; \tau^{N}_{O}<\theta_{\kappa_{1}}) \nonumber \\
   &\leq \mathbb{P}_{z}(\tilde{Z}_{n}\in G^{1}_{1}\quad\text{for some}\quad 1\leq n<\kappa_{1})  \label{pro5}  \\
   &= \mathbb{E}_{z}(\mathbb{P}_{\tilde{Z}_{1}}(\tilde{Z}_{n}\in G^{1}_{1}\quad\text{for some}\quad 0\leq  n<\kappa_{1})), \label{pro6} 
\end{align}
where we assume of course that
\begin{equation*}
\mathbb{P}_{v}\big(\tilde{Z}_{n}\in G^{1}_{1}\quad\text{for some}\quad 0\leq  n<\kappa_{1}\big):=
\begin{cases}
1& \text{ if }\quad v\in G^{1}_{1}, \\
0 & \text{if}\quad v\in G^{1}_{0}.
\end{cases}
\end{equation*}

 Now we try to have an upper bound of $\mathbb{P}_{v}(\tilde{Z}_{n}\in G^{1}_{1}\quad\text{for some}\quad 0\leq  n<\kappa_{1})$ for $v\in G^{1}_{\ell}$ where $\ell\neq0,1$.   For all suitable $\rho_{1}$, $\rho_{0}$ and $N$ large we have for all $v\in G^{1}_{\ell}$
\begin{equation*}
  \mathbb{P}_{v}(\tilde{Z}_{n}\in G^{1}_{1}\quad\text{for some}\quad 0\leq  n<\kappa_{1})= q_{W}(v,G^{1}_{1}).
\end{equation*}
Where $q_{W}(v,G^{1}_{1})$ is the probability that, starting from $v$ the Markov chain $(\tilde{Z}_{n})$ hits $G^{1}_{1}$ when it first enters $G^{1}_{0}\cup G^{1}_{1}$.
 
 Now we will use a result on the Markov chains described by \cite{freidlin2012random} lemma 3.3 of chapter 6 in terms of the $W$-graphs on the sets $\mathcal{L}=\{0,\dots,M\}$ where $W=\{0, 1\}$. To apply this lemma we define the sets $X_{i}=G^{1}_{i}$ $\forall i\in\{0,\dots,M\}$ and $X=\underset{i=0}{\overset{M}{\bigcup}}X_{i}$. If we define $a=\exp\{N\eta/4^{M-1}\}$  and $p_{i,j}=\exp\{-N\widetilde{V}_{\bar{O}}(K_{i},K_{j})\}$,  we deduce from lemma \ref{esti1} that the assumptions of the lemma 3.3 of chapter 6 in \cite{freidlin2012random} hold true, hence for all suitable $\rho_{1}$, $\rho_{0}$ and $N$ large enough, $v\in G^1_\ell$,
\begin{equation*}
q_{W}(v,G^{1}_{1})\leq a^{4^{M-1}}\frac{\sum_{g\in Gr_{\ell,1}(W)}\prod_{(i\rightarrow j)\in g} p_{i,j}}{\sum_{g\in Gr(W)}\prod_{(i\rightarrow j)\in g} p_{i,j}} .
\end{equation*}
 Thus
 \begin{equation}\label{ineqq}
q_{W}(v,G^{1}_{1})\leq\exp\{N \eta/7\}\frac{\sum_{g\in Gr_{\ell,1}(W)}\exp\Big\{-N\sum_{(i\rightarrow j)\in g} \widetilde{V}_{\bar{O}}(K_{i},K_{j})\Big\}}{\sum_{g\in Gr(W)}\exp\Big\{-N\sum_{(i\rightarrow j)\in g}\widetilde{V}_{\bar{O}}(K_{i},K_{j})\Big\} } .
\end{equation}
It is easy to see that $\sum_{g\in Gr_{\ell,1}(W)}\exp\Big\{-N\sum_{(i\rightarrow j)\in g} \widetilde{V}_{\bar{O}}(K_{i},K_{j})\Big\}$ is equivalent to a positive constant $C_{1}$ which is the number of graphs in $Gr_{\ell,1}(W)$ at which the minimum of $\sum_{(i\rightarrow j)\in g} \widetilde{V}_{\bar{O}}(K_{i},K_{j})$ is attained multiplied by $\exp\Big\{-N\min_{g\in Gr_{\ell,1}(W)}\sum_{(i\rightarrow j)\in g} \widetilde{V}_{\bar{O}}(K_{i},K_{j})\Big\}$. We also see easily that the denominator in \eqref{ineqq} is equivalent to a positive constant multiplied by  $\exp\Big\{-N\min_{g\in Gr(W)}\sum_{(i\rightarrow j)\in g} \widetilde{V}_{\bar{O}}(K_{i},K_{j})\Big\}$. Hence there exists $N_0$ such that for $N\ge N_0$
\begin{align*}
q_{W}(v,G^{1}_{1}) 
&\leq \exp\Big\{-N\left(\min_{g\in Gr_{\ell,1}(W)}\sum_{(i,j)\in g}\widetilde{V}_{\bar{O}}(K_{i},K_{j})-\min_{g\in Gr(W)}\sum_{(i,j)\in g}\widetilde{V}_{\bar{O}}(K_{i},K_{j})\right)+N\eta/6\Big\}.
\end{align*}
 
 We remark here that in the case of a single attracting set $K_{0}$, $\widetilde{V}_{\bar{O}}(K_{i},K_{0})=0$ for all $i$. Then we have
\begin{equation*}
  \min_{g\in Gr(W)}\sum_{(i,j)\in g}\widetilde{V}_{\bar{O}}(K_{i},K_{j})=0
\end{equation*}
We also have
\begin{equation*}
  \min_{g\in Gr_{\ell,1}(W)}\sum_{(i,j)\in g}\widetilde{V}_{\bar{O}}(K_{i},K_{j})=V_{\bar{O},K_{0}^{c}}(K_{\ell},K_{1}).
\end{equation*}
With these preceding remark, we deduce that
\begin{equation*}
  \mathbb{P}_{v}(\tilde{Z}_{n}\in G^{1}_{1}\quad \text{for some}\quad0\leq  n<\kappa_{1})\leq\exp\{-N(V_{\bar{O},K_{0}^{c}}(K_{\ell},K_{1})-\eta/6)\};\quad v\in G^{1}_{\ell},\quad\ell\neq0,1.
\end{equation*}
Now according \eqref{pro6} and lemma \ref{esti1} we have for $N>\frac{6\log(M)}{\eta}\vee N_0$ \begin{align*}
  &\mathbb{P}_{z}(|\tilde{Z}^{N}(\tau^{N}_{O})-y|< \delta; \tau^{N}_{O}<\theta_{\kappa_{1}})\leq \mathbb{E}_{z}\Big(\sum_{\ell=0}^{M}\mathds{1}_{Z_{1}\in G^{1}_{\ell}}\mathbb{P}_{Z_{1}}\big(Z_{n}\in G^{1}_{1}\quad\text{for}\quad\text{some}\quad 0\leq  n<\kappa_{1}\big)\Big)\\
  &=\mathbb{P}_{z}(\tilde{Z}_{1}\in G^{1}_{1})+\sum_{\ell=2}^{M}\mathbb{E}_{z}\Big(\mathds{1}_{\tilde{Z}_{1}\in G^{1}_{\ell}}\mathbb{P}_{\tilde{Z}_{1}}\big(\tilde{Z}_{n}\in G^{1}_{1}\quad\text{for}\quad\text{some}\quad 0\leq  n<\kappa_{1}\big)\Big)\\
  &\leq \exp\Big\{-N(\widetilde{V}_{\bar{O}}(K_{0},K_{1})-\eta/6)\}+\sum_{\ell=2}^{M} \exp\{-N(\widetilde{V}_{\bar{O}}(K_{0},K_{\ell})+V_{\bar{O},K_{0}^{c}}(K_{\ell},K_{1})-\eta/6)\Big\}\\
  & \leq\exp\Big\{-N(\widetilde{V}_{\bar{O}}(K_{0},K_{1})-\eta/6)\}+\exp\{-N(\min_{2\leq\ell\leq M}\{\widetilde{V}_{\bar{O}}(K_{0},K_{\ell})+ V_{\bar{O},K_{0}^{c}}(K_{\ell},K_{1})\}-\eta/3)\Big\} \\
  &\leq \exp\Big\{-N\big(\widetilde{V}_{\bar{O}}(K_{0},K_{1})\wedge\min_{2\leq\ell\leq M}\{\widetilde{V}_{\bar{O}}(K_{0},K_{\ell})+ V_{\bar{O},K_{0}^{c}}(K_{\ell},K_{1})\}-\eta/2\big)\Big\}.
\end{align*}
We remark here that 
\begin{align*}
  &\widetilde{V}_{\bar{O}}(K_{0},K_{1})\wedge\min_{2\leq\ell\leq M}\{\widetilde{V}_{\bar{O}}(K_{0},K_{\ell})+ V_{\bar{O},K_{0}^{c}}(K_{\ell},K_{1})\} \\
  &= V_{\bar{O}}(K_{0},K_{1}) \\
   &= V_{\bar{O}}(z^{\ast},y)
\end{align*}
And then
\begin{equation}\label{equation1}
 \mathbb{P}_{z}(|\tilde{Z}^{N}(\tau^{N}_{O})-y|< \delta; \tau^{N}_{O}<\theta_{\kappa_{1}})\leq\exp\{-N(V(z^{\ast},y)-\eta/2)\}
\end{equation}
uniformly over $z\in G^{1}_{0}$, provided $\rho_{1}$, $\rho_{0}$ and $N$ are chosen in suitable way.
\par Combining \eqref{pro4}, \eqref{equation1} and \eqref{pro1}, we deduce that
\begin{equation*}
\mathbb{P}_{z}(|\tilde{Z}^{N}(\tau^{N}_{O})-y|< \delta| \tau^{N}_{O}<\theta_{\kappa_{1}})\leq\exp\{-N(V_{\bar{O}}(z^{\ast},y)-V_{\tilde{\partial O}}-\eta)\},\quad\text{for all $z\in G^{1}_{0}$},
\end{equation*}
provided $\delta<\rho_{1}$ with $\rho_{1}$ sufficiently small and $N$ sufficiently large. As $\eta>0$ is arbitrary, we obtain the upper bound \eqref{upper2} from \eqref{maineq}. This concludes the proof of lemma \ref{lem6}.
\end{proof}

 We finally deduce our main result which is an analogue of Theorem 3.1 in \cite{day1990large}.
\begin{theorem}\label{mainresult1}
For  $z\in O$, $y\in\widetilde{\partial O}$ and any  $\eta$, $\delta_{0}>0$ there exist $\delta<\delta_{0}$ and $N_{0}$, such that for all $N>N_{0}$
\begin{equation}\label{lower13}
\exp(-N(S_{z}(y)+\eta))\leq\mathbb{P}_{z}(|\tilde{Z}^{N}(\tau^{N}_{O})-y|<\delta)
\end{equation}
and
\begin{equation}\label{upper13}
\mathbb{P}_{z}(|\tilde{Z}^{N}(\tau^{N}_{O})-y|<\delta)\leq\exp(-N(S_{z}(y)-\eta))
\end{equation}
where $S_{z}(y)$ is defined by:
\begin{equation}\nonumber
S_{z}(y)=V_{\bar{O}}(z,y)\wedge(V_{\bar{O}}(z^{\ast},y)-V_{\widetilde{\partial O}}).
\end{equation}
\end{theorem}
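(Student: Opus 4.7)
The plan is to decompose the exit event according to whether the trajectory of $\tilde Z^N$ visits a small neighborhood $G_0^1$ of $z^\ast$ before the exit time $\tau_O^N$. The two cases correspond exactly to the two terms in $S_z(y) = V_{\bar O}(z,y) \wedge (V_{\bar O}(z^\ast,y) - V_{\widetilde{\partial O}})$: the case where the process first reaches $G_0^1$ reduces, via the strong Markov property, to the setting of Lemma~\ref{lem6} and contributes $V_{\bar O}(z^\ast, y) - V_{\widetilde{\partial O}}$, whereas the case where the process exits $O$ without ever visiting $G_0^1$ is governed by $V_{\bar O}(z, y)$ via a direct application of Theorem~\ref{LDP}.

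For the lower bound \eqref{lower13}, I will prove two separate inequalities and take the larger. First, I build a near-optimal trajectory $\phi$ from $z$ to $y$ with $I_T(\phi) \le V_{\bar O}(z,y) + \eta/4$, push it into the interior of $O$ using the point $z_0$ from Assumption~\ref{assump2}.\ref{assump22} (as in the proof of Lemma~\ref{lemma2.5}), reconnect to $z$ and $y$ via the short curves of Lemma~\ref{assumption1}, and apply Theorem~\ref{LDP}(a); arranging that any $\tilde Z^N$ close to $\phi$ in sup-norm does exit $O$ within $\delta$ of $y$ yields $\mathbb{P}_z(|\tilde Z^N(\tau_O^N) - y| < \delta) \ge \exp(-N(V_{\bar O}(z,y) + \eta))$. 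Second, Lemma~\ref{Limit} shows $\mathbb{P}_z(\tilde Z^N(\theta_{\kappa_1} \wedge \tau_O^N) \in G_0^1) \to 1$, so the strong Markov property at $\theta_{\kappa_1} \wedge \tau_O^N$ combined with the lower bound in Lemma~\ref{lem6} gives $\mathbb{P}_z(|\tilde Z^N(\tau_O^N) - y| < \delta) \ge \exp(-N(V_{\bar O}(z^\ast, y) - V_{\widetilde{\partial O}} + \eta))$.

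For the upper bound \eqref{upper13}, set $\sigma := \inf\{t \ge 0 : \tilde Z^N(t) \in \overline{G_0^1}\}$ and split
\begin{equation*}
  \mathbb{P}_z(|\tilde Z^N(\tau_O^N)-y|<\delta) = \mathbb{P}_z(|\tilde Z^N(\tau_O^N)-y|<\delta;\ \sigma < \tau_O^N) + \mathbb{P}_z(|\tilde Z^N(\tau_O^N)-y|<\delta;\ \sigma \ge \tau_O^N).
\end{equation*}
By the strong Markov property at $\sigma$ and the upper bound in Lemma~\ref{lem6}, the first summand is at most $\sup_{w \in \overline{G_0^1}} \mathbb{P}_w(|\tilde Z^N(\tau_O^N)-y|<\delta) \le \exp(-N(V_{\bar O}(z^\ast,y) - V_{\widetilde{\partial O}} - \eta/2))$. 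The second summand is the probability that $\tilde Z^N$ exits within $\delta$ of $y$ while avoiding $\overline{G_0^1}$; combining Theorem~\ref{LDP}(d) applied to a suitable closed subset of $D_{T,\bar O}$, Lemma~\ref{lemecomp} (to truncate at a finite horizon $T$), and the lower semicontinuity of $V_{\bar O}(z,\cdot)$ implicit in Lemma~\ref{lemma1.2}, this summand is bounded by $\exp(-N(V_{\bar O}(z,y) - \eta/2))$. Adding and absorbing the factor $2$ into $\exp(N\eta/2)$ yields \eqref{upper13}.

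The main obstacle is the upper bound on the direct-exit term: one must show that the LDP infimum of $I_T$ over trajectories starting at $z$, ending within $\delta$ of $y$, and avoiding $\overline{G_0^1}$ on $[0,T]$ is at least $V_{\bar O}(z,y) - \eta/2$, uniformly for $T$ large. The natural route is to truncate the horizon using Lemma~\ref{lemecomp} (which controls the long-time behaviour on $\bar O \setminus \bigcup_i G_i^1$) and then exploit compactness of the sublevel sets of $I_T$ together with lower semicontinuity, in direct parallel with the argument used in the proof of Lemma~\ref{lemma2.6}; the continuity of $V_{\bar O}(z,\cdot)$ near $y$, inherited from Lemma~\ref{assumption1}, absorbs the small gap between ending exactly at $y$ and ending within the ball $B_\delta(y)\cap\widetilde{\partial O}$.
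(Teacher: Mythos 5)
Your decomposition -- visit a small neighborhood $G_0^1$ of $z^\ast$ before exit versus exit directly -- is the same one the paper uses (the paper decomposes on $\{\theta_{\kappa_1}<\tau_O^N\}$ vs.\ $\{\theta_{\kappa_1}>\tau_O^N\}$; your $\sigma$ is a minor variant), and your treatment of the lower bound and of the ``first-visit-$G_0^1$'' part of the upper bound essentially reproduces the paper's handling of $\Pi_1$ via Lemma~\ref{Limit} and Lemma~\ref{lem6}, and of $\Pi_2$'s lower bound via Lemma~\ref{lemma2.4}. However, there is a genuine gap in your upper bound on the direct-exit term.

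You propose to bound $\mathbb{P}_z\bigl(|\tilde Z^N(\tau_O^N)-y|<\delta,\ \sigma\ge\tau_O^N\bigr)$ by truncating at a fixed horizon $T$ via Lemma~\ref{lemecomp} and then applying Theorem~\ref{LDP}(d). This fails because Lemma~\ref{lemecomp} controls only the time the trajectory spends in a compact set containing \emph{no} $\omega$-limit set; it gives you an exponential estimate for the time to first reach some $G_i^1$, but says nothing about how long the process lingers inside $G_i^1$ for $i\ge1$. Since $K_i\subset\widetilde{\partial O}$ are $\omega$-limit sets of the characteristic-boundary dynamics, the solution of the ODE is attracted to them at zero LDP cost, so $\tau_O^N$ is not exponentially bounded on the event you are estimating, and you cannot choose a single $T$, independent of $N$, for the finite-horizon upper LDP. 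Moreover, on the direct-exit event the path may visit several of the $G_i^1$ with $i\ge1$ before finally exiting near $y$; aggregating the single-step estimates of Lemma~\ref{esti1} over all such excursions is exactly what the embedded Markov chain $\tilde Z_n$ and the Freidlin--Wentzell $W$-graph argument (Lemma~3.3 of Chapter~6 in \cite{freidlin2012random}) accomplish in the paper's treatment of $\Pi_2$, via the inclusion $\{|\tilde Z^N(\tau_O^N)-y|<\delta,\ \tau_O^N<\theta_{\kappa_1}\}\subseteq\{\tilde Z_n\in G_1^1\ \text{for some}\ n<\kappa_1\}$. Your citation of ``the argument of Lemma~\ref{lemma2.6}'' is misleading: that proof only bounds the total exit-before-return probability and works precisely because $\tau_{O_\delta}^N<\theta_{\kappa_1}$ forces $\tau_{O_\delta}^N\le\theta_1$, which \emph{is} controlled by Lemma~\ref{lemecomp}; that reduction does not carry over once one must also track \emph{where} on $\widetilde{\partial O}$ the exit occurs.

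A correct proof must either re-introduce the chain/$W$-graph machinery (as the paper does, reducing the direct-exit upper bound to $\exp\{-N(V_{\bar O,K_0^c}(z,y)-2\eta/3)\}$ and then invoking the identity $V_{\bar O,K_0^c}(z,y)\wedge(V_{\bar O}(z^\ast,y)-V_{\widetilde{\partial O}})=V_{\bar O}(z,y)\wedge(V_{\bar O}(z^\ast,y)-V_{\widetilde{\partial O}})$), or supply an alternative mechanism to control arbitrarily long sojourns near $K_1,\dots,K_M$ before exit. As written, that mechanism is missing.
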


\begin{proof}
 We first remark that for $z=z^{\ast}$ the result is given by Lemma \ref{lem6} . If $z\in O\setminus\{z^{\ast}\}$, we make the restriction that $\rho_{0}$ be sufficiently small so that $\dist(z,K_{i})>\rho_{0}$ for all $i=0,...,M$. This allows us to write
\begin{align}\label{ineqI}
     \mathbb{P}_{z}(|\tilde{Z}^{N}(\tau^{N}_{O})-y|<\delta)&=  \mathbb{P}_{z}(|\tilde{Z}^{N}(\tau^{N}_{O})-y|<\delta, \theta_{\kappa_{1}}<\tau^{N}_{O}) \nonumber\\
    &+ \mathbb{P}_{z}(|\tilde{Z}^{N}(\tau^{N}_{O})-y|<\delta, \theta_{\kappa_{1}}>\tau^{N}_{O}))  \nonumber\\
    &= \mathbb{E}_{z}(\mathbb{P}_{\tilde{Z}_{\kappa_{1}}}(|\tilde{Z}^{N}(\tau^{N}_{O})-y|<\delta); \theta_{\kappa_{1}}<\tau^{N}_{O}) \nonumber\\ 
    &+\mathbb{P}_{z}(|\tilde{Z}^{N}(\tau^{N}_{O})-y|<\delta, \theta_{\kappa_{1}}>\tau^{N}_{O}) \nonumber \\
    &= \Pi_{1}+\Pi_{2}. 
    \end{align}
\begin{description}
  \item[Upper bound of $\Pi_{1}$.] 
Lemma \ref{lem6} tells us that for all $z\in G_{0}^{1}$,
\begin{equation*}
\mathbb{P}_{z}(|\tilde{Z}^{N}(\tau^{N}_{O})-y|<\delta) \leq\exp\{-N(V_{\bar{O}}(z^{\ast},y)-V_{\widetilde{\partial O}}-\eta/3)\}.
\end{equation*}
Hence $\Pi_{1}$ can be bounded from above as follows
\begin{align*}
     \Pi_{1}&= \mathbb{E}_{z}(\mathbb{P}_{\tilde{Z}_{\kappa_{1}}}\{|\tilde{Z}^{N}(\tau^{N}_{O})-y|<\delta\} ;\theta_{\kappa_{1}}<\tau^{N}_{O}) \\
     &\leq\mathbb{P}_{z}\{\theta_{\kappa_{1}}<\tau^{N}_{O})\}\exp\{-N(V_{\bar{O}}(z^{\ast},y)-V_{\widetilde{\partial O}}-\eta/3)\}   \\
    &\leq\exp\{-N(V_{\bar{O}}(z^{\ast},y)-V_{\widetilde{\partial O}}-\eta/3)\}. 
\end{align*}

  \item[Lower bound of $\Pi_{1}$.]
  From Lemma \ref{Limit} we deduce that for suitably small $\rho_{0}, \rho_{1}$ there exists $N\in \mathbb{N}$ such that, for all $N>N_{0}$,
  
\begin{align*}
    \mathbb{P}_{z}\{\theta_{\kappa_{1}}<\tau^{N}_{O}\}&\geq\mathbb{P}_{z}\{\tilde{Z}^{N}(\theta_{\kappa_{1}}\wedge\tau_{O}^{N})\in G^{1}_{0}\}  \\ 
    &\geq\exp\{-N\eta/2\}.
\end{align*}
  
  $\Pi_{1}$ can then be bounded from below as follows using Lemma \ref{lem6}
\begin{align*}
      \Pi_{1}&=\mathbb{E}_{z}(\mathbb{P}_{\tilde{Z}_{\kappa_{1}}}(|\tilde{Z}^{N}(\tau^{N}_{O})-y|<\delta) ,\theta_{\kappa_{1}}<\tau^{N}_{O}) \\
      &\geq \mathbb{P}_{z}( \theta_{\kappa_{1}}<\tau^{N}_{O})\exp\{-N(V_{\bar{O}}(z^{*},y)-V_{\widetilde{\partial O}}+\eta/2)\}   \\
    & \geq\exp\{-N(V_{\bar{O}}(z^{*},y)-V_{\widetilde{\partial O}}+\eta)\}.
\end{align*}

  \item[Upper bound of $\Pi_{2}$.]
 We now obtain a upper bound for $\Pi_{2}$ by making the same computations as in the proof of lemma \ref{lem6}. Indeed
\begin{align*}
  \mathbb{P}_{z}(|\tilde{Z}^{N}(\tau^{N}_{O})-y|< \delta; \tau^{N}_{O}<\theta_{\kappa_{1}}) &\leq \mathbb{P}_{z}(\tilde{Z}^{N}(\tau^{N}_{O})\in G^{1}_{1}; \tau^{N}_{O}<\theta_{\kappa_{1}}) \\
  &\leq \mathbb{P}_{z}(\tilde{Z}_{n}\in G^{1}_{1}\quad\text{for some}\quad 0\leq n\leq\kappa_{1}) \\
   &=\mathbb{E}_{z}(\mathbb{P}_{\tilde{Z}_{1}}(\tilde{Z}_{n}\in G^{1}_{1}\quad\text{for some}\quad 0\leq  n<\kappa_{1})).
\end{align*}
Where we assume of course that,
\begin{equation*}
\mathbb{P}_{v}(\tilde{Z}_{n}\in G^{1}_{1}\quad\text{for some}\quad 0\leq  n<\kappa_{1}):=
\begin{cases}
1& \text{ if }\quad v\in G^{1}_{1} \\
0 & \text{if}\quad v\in G^{1}_{0}.
\end{cases}
\end{equation*}
For $v\in G_{\ell}^{1}$, $\ell\neq0,1$ we can establish as in the proof of Lemma \ref{lem6} that
\begin{equation*}
\mathbb{P}_{v}(\tilde{Z}_{n}\in G^{1}_{1}\quad\text{for some}\quad 0\leq  n<\kappa_{1})\leqslant \exp\{-N(V_{\bar{O},K_{0}^{c}}(K_{\ell},K_{1})-\eta)\}
\end{equation*} 
Hence 
\begin{align*}
     \Pi_{2}&= \mathbb{P}_{z}(|\tilde{Z}^{N}(\tau^{N}_{O})-y|<\delta, \tau^{N}_{O}<\theta_{\kappa_{1}})  \\
    &  \leq \exp\{-N(\widetilde{V}_{\bar{O}}(z,K_{1})\wedge\min_{2\leq \ell\leq M}[\widetilde{V}_{\bar{O}}(z,K_{\ell})+V_{\bar{O},K_{0}^{c}}(K_{\ell},K_{1})]-2\eta/3)\} \\
    & \leq \exp\{-N(V_{\bar{O},K_{0}^{c}}(z,y)-2\eta/3)\}.
\end{align*}

  \item[Lower bound of $\Pi_{2}$.]
   From Lemma \ref{lemma2.4}, we deduce
\begin{align*}
\Pi_{2}&=\mathbb{P}_{z}(|\tilde{Z}^{N}(\tau^{N}_{O})-y|<\delta, \theta_{\kappa_{1}}>\tau^{N}_{O}) \\
&\geq\exp\{-N(V_{\bar{O},K_{0}^{c}}(z,y)+\eta)\}.
\end{align*}

\item[Conclusion]
Thus, the term on the left in \eqref{ineqI} can be bounded from above as follows, provided that $N>\frac{3\log(2)}{\eta}$
\begin{align*}
    &  \mathbb{P}_{z}(|\tilde{Z}^{N}(\tau^{N}_{O})-y|<\delta) \leq\exp\{-N(V_{\bar{O}}(z^{*},y)-V_{\widetilde{\partial O}}-\eta/3)\}+\exp\{-N(V_{\bar{O},K_{0}^{c}}(z,y)-2\eta/3)\}\\
    &  \leq \exp\{-N(V_{\bar{O},K_{0}^{c}}(z,y)\wedge[V_{\bar{O}}(z^{*},y)-V_{\widetilde{\partial O}}]-\eta)\}.
\end{align*}
We now show that 
\begin{equation}\label{equal}
V_{\bar{O},K_{0}^{c}}(z,y)\wedge[V_{\bar{O}}(z^{*},y)-V_{\widetilde{\partial O}}]=V_{\bar{O}}(z,y)\wedge[V_{\bar{O}}(z^{*},y)-V_{\widetilde{\partial O}}]
\end{equation}
Indeed we first remark that $V_{\bar{O},K_{0}^{c}}(z,y)\geq V_{\bar{O}}(z,y)$. If $V_{\bar{O},K_{0}^{c}}(z,y)> V_{\bar{O}}(z,y)$, then the nearly minimal paths for 
$V_{\bar{O}}(z,y)$ must come arbitrarily close to $K_{0}$, so
\begin{equation*}
V_{\bar{O}}(z,y)=V_{\bar{O}}(z,z^{*})+V_{\bar{O}}(z^{*},y)=V_{\bar{O}}(z^{*},y),
\end{equation*}
and then $V_{\bar{O}}(z^{*},y)-V_{\widetilde{\partial O}}<V_{\bar{O}}(z,y)<V_{\bar{O},K_{0}^{c}}(z,y)$ and \eqref{equal} is true. This establishes the upper bound \eqref{upper13}.
\par In order to obtain the lower bound \eqref{lower13}, we deduce from the lower bounds for
 $\Pi_{1}$ and $\Pi_{2}$ that
\begin{align*}
      \mathbb{P}_{z}(|\tilde{Z}^{N}(\tau^{N}_{O})-y|<\delta) &\geq \exp\{-N(V_{\bar{O}}(z^{*},y)-V_{\widetilde{\partial O}}+\eta)\}+\exp\{-N(V_{\bar{O},K_{0}^{c}}(z,y)+\eta)\}\\
    &  \geq\exp\{-N(V_{\bar{O},K_{0}^{c}}(z,y)\wedge[V_{\bar{O}}(z^{*},y)-V_{\widetilde{\partial O}}]+\eta)\},
\end{align*}
and then \eqref{lower13} follows from \eqref{equal}.
\end{description}
\end{proof}

\begin{corollary}\label{corunicite}
 Assume that there exits a unique point $y^{*}\in\widetilde{\partial O}$  such that 
 \begin{equation*} 
 V_{\bar{O}}(z^{*},y^{*})=V_{\widetilde{\partial O}}=\inf_{y\in\widetilde{\partial O}}V_{\bar{O}}(z^{*},y),
 \end{equation*}  
 then for all $\delta>0$, and  $z\in O$
\begin{equation*}
\lim_{N\to\infty}\mathbb{P}_{z}(|\tilde{Z}^{N}(\tau^{N}_{O})-y^{*}|<\delta)=1.
\end{equation*}
\end{corollary}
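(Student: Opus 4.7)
The plan is to deduce Corollary \ref{corunicite} directly from the upper bound \eqref{upper13} of Theorem \ref{mainresult1} via a finite covering argument. The key point is that, under the uniqueness hypothesis, $S_z(y)$ is strictly positive for every $y\in\widetilde{\partial O}\setminus\{y^{*}\}$ and in fact admits a positive uniform lower bound on the complement of any fixed neighborhood of $y^{*}$.

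First I would recall that $S_z(y)=V_{\bar O}(z,y)\wedge(V_{\bar O}(z^{*},y)-V_{\widetilde{\partial O}})$ and examine each factor separately. The uniqueness assumption combined with the continuity of $y\mapsto V_{\bar O}(z^{*},y)$ (already exploited in Lemma \ref{lemma1.2}) yields $V_{\bar O}(z^{*},y)-V_{\widetilde{\partial O}}>0$ for $y\neq y^{*}$, and compactness of $\{y\in\widetilde{\partial O}:|y-y^{*}|\ge\delta/2\}$ gives a positive lower bound $\alpha_{1}$ on that set. For $V_{\bar O}(z,y)$ with $y\in\widetilde{\partial O}$, I would argue by contradiction: if $V_{\bar O}(z,y)=0$ there would exist paths $\phi^{n}$, $0\le t\le T_{n}$, connecting $z$ to $y$ with $I_{T_{n}}(\phi^{n})\to 0$. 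When the $T_{n}$ stay bounded, compactness of the sublevel sets of $I_T$ produces a limit with zero cost, necessarily the ODE trajectory starting at $z$, which stays in $O\setminus\widetilde{\partial O}$ by the characteristic boundary condition \eqref{condboundcarac} --- contradicting $\phi_{T}=y\in\widetilde{\partial O}$. When $T_{n}\to\infty$, the paths must spend arbitrarily long times near $z^{*}$ and then escape to $\widetilde{\partial O}$, which costs at least $V_{\widetilde{\partial O}}>0$, again a contradiction. Continuity of $V_{\bar O}(z,\cdot)$ and compactness then yield a positive lower bound $\alpha_{2}$ on $\{y\in\widetilde{\partial O}:|y-y^{*}|\ge\delta/2\}$, so that $\alpha:=\alpha_{1}\wedge\alpha_{2}>0$ is a uniform lower bound for $S_z$ there.

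Next I would cover this compact set by finitely many balls $B_{\delta'}(y_{i})$, $i=1,\ldots,m$, with $\delta'<\delta_{0}$ small enough that $S_z\ge\alpha/2$ on each ball. Since $\tilde{Z}^{N}(\tau^{N}_{O})$ lies within distance of order $1/N$ from $\widetilde{\partial O}$, for $N$ large the event $\{|\tilde{Z}^{N}(\tau^{N}_{O})-y^{*}|\ge\delta\}$ forces $\tilde{Z}^{N}(\tau^{N}_{O})\in\bigcup_{i}B_{\delta'}(y_{i})$. Applying the upper bound \eqref{upper13} to each $y_{i}$ with $\eta=\alpha/4$ yields $\mathbb{P}_{z}(|\tilde{Z}^{N}(\tau^{N}_{O})-y_{i}|<\delta')\le\exp(-N\alpha/4)$ for $N$ sufficiently large, and summing over the finite cover gives $\mathbb{P}_{z}(|\tilde{Z}^{N}(\tau^{N}_{O})-y^{*}|\ge\delta)\le m\exp(-N\alpha/4)$, which tends to $0$ as $N\to\infty$.

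The main obstacle is the uniform positivity of $S_z$ away from $y^{*}$; once this is established, the covering and summation step is routine. It hinges on the continuity of $V_{\bar O}$ in both variables (taken from earlier in the paper) and on the invariance of $\widetilde{\partial O}$ under \eqref{ODE}, which together rule out zero-cost curves from an interior point of $O$ to the characteristic boundary.
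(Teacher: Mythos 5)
Your overall strategy is the right one, and in fact there is no explicit proof in the paper to compare against: the corollary is intended to follow from the upper bound \eqref{upper13} of Theorem \ref{mainresult1} by exactly the finite‐cover argument you outline. You correctly identify that the crux is a uniform positive lower bound $\alpha$ on $S_{z}(y)=V_{\bar O}(z,y)\wedge(V_{\bar O}(z^{*},y)-V_{\widetilde{\partial O}})$ over the compact set $\{y\in\widetilde{\partial O}:|y-y^{*}|\ge \delta/2\}$, and your treatment of the second factor (uniqueness of $y^{*}$, continuity of $V_{\bar O}(z^{*},\cdot)$, compactness) is fine. Two remarks, one cosmetic and one substantive.

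The cosmetic one: Theorem \ref{mainresult1} produces, for each $y_{i}$, its own $\delta_{i}<\delta_{0}$ rather than letting you prescribe a common radius $\delta'$ a priori. This is easily repaired: cover the compact set by the balls $B_{\delta_{y}}(y)$ with $\delta_{y}$ the radius given by the theorem at $y$ (with $\eta<\alpha/2$), extract a finite subcover $B_{\delta_{y_{i}}}(y_{i})$, and sum; monotonicity of the events then gives the bound you want.

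The substantive one is your argument that $V_{\bar O}(z,y)>0$ for $z\in\mathring O$ and $y\in\widetilde{\partial O}$. The bounded‐time case is handled correctly (a zero‐cost limit is an ODE trajectory, which stays in $O$ since $O$ is the basin of attraction of $z^{*}$, so it cannot end at $y\in\widetilde{\partial O}$). But the $T_{n}\to\infty$ case is not: you assert that the paths ``must spend arbitrarily long times near $z^{*}$,'' which does not follow. A nearly‐zero‐cost path with $T_{n}\to\infty$ could instead spend most of its time near one of the boundary $\omega$‐limit sets $K_{i}\subset\widetilde{\partial O}$, never visiting a neighborhood of $z^{*}$ at all. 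To rule that out you would have to argue that the portion of the path from $z$ to a neighborhood of that $K_{i}$ itself has cost bounded below, which is exactly the statement you are trying to prove, so the argument as written is circular. The standard way to close this is the dichotomy already used in the proof of \eqref{equal}: either the nearly minimal paths come arbitrarily close to $K_{0}=\{z^{*}\}$ (in which case concatenation and continuity of $V_{\bar O}$ give $V_{\bar O}(z,y)\ge V_{\bar O}(z^{*},y)\ge V_{\widetilde{\partial O}}>0$), or they avoid a fixed neighborhood of $K_{0}$, and one then peels off the time spent near the various $K_{i}$ one at a time, invoking the deterministic version of Lemma \ref{lemecomp} on compacts avoiding all $\omega$‐limit sets and continuity of the quasipotential, terminating since there are finitely many compacts. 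As written, your proof leaves a genuine gap at this step.

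Once $V_{\bar O}(z,\cdot)>0$ is established on the relevant compact (hence $\alpha>0$ by continuity and compactness), the rest of your proof (cover, apply \eqref{upper13} with $\eta=\alpha/4$, sum the finitely many bounds) is correct.
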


\section{Application to four models}\label{sec:appl}
In the four following models , we show that for $N$ large, with high probability the stochastic process hits the boundary of the basin or attraction of the endemic equilibrium near the point to which the law of large number limit converges, when restricted to this boundary. Note that $V_{\widetilde{\partial O}}$ is the value function of a deterministic optimal control problem. We will exploit  Pontryaguin's maximum principle, in order to prove the results of this section. 
Let us first describe Pontryaguin's maximum principle in the case of the optimal control problems we are concerned with here, then we shall present the four models successively, and finally we shall prove our result.
Note that Assumption \ref{assump2} is easily verified in the first two examples, and has been carefully verified in \cite{Pard2017} for the last two.

\subsection{Pontryaguin maximum principle}
Let us formulate the optimal control problem, of which  $V_{\widetilde{\partial O}}$ is the value function. Let
\[ \dot{z}_t=Bu_t,\quad z_0=z^\ast,\]
where $B$ is the $d\times k$ matrix whose $j$--th column is the vector $h_j$, $1\le j\le k$, and the control $u\in L^1([0,\infty);\R^k_+)$ is to be chosen together with the final time $T$ such as to minimize the cost functional
\[ C(u)=\sum_{j=1}^k\int_0^T f(u_j(t),\beta_j(z_t)) dt,\quad \text{where }f(a,b)=a\log(a/b)-a+b,\]
subject to the constraint : $z_T\in M$, where $M$ is the set of points $z\in A$ (or $\in A_R$ in the case of the $SIR$ model with demography) which are such that 
\[\text{if } \dot{z}_t=b(z_t)=\sum_{j=1}^k \beta_j(z)h_j,\ z_0=z,\text{ then }z_t\to\bar z,\ \text{ as }t\to\infty.\]

 We associate to the above control problem the Hamiltonian
\[ H(z,r,u)=\langle r,Bu\rangle-\sum_{j=1}^k f(u_j,\beta_j(z)).\]
The maximum principle states that (see \cite{PBGM} and \cite{trelat2008controle})
\begin{proposition}\label{pont}
If $(\hat{T}; \hat{u}_t, 0\le t\le \hat{T})$ is an optimal pair, then there exists an adjoint state $r\in C([0,\hat{T}];\R^d)$ such that
\begin{align*}
 \dot{z}_t&=B\hat{u}_t,: z_0=z^\ast,\ z_{\hat{T}}\in M,\\
\dot{r}_t&=\sum_{j=1}^k\left[\nabla\beta_j(z_t)-\hat{u}_t^j\frac{\nabla\beta_j(z_t)}{\beta_j(z_t)}\right],\ r_{\hat{T}}\perp M ,\\
H(z_t,r_t,\hat{u}_t)&=\max_{v\in \R^k_+}H(z_t,r_t,v)=0,\ 0\le t\le \hat{T}.
\end{align*}
\end{proposition}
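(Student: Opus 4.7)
The plan is to identify $V_{\widetilde{\partial O}}$ with the value function of a classical Mayer/Bolza optimal control problem with free terminal time and then invoke the standard Pontryaguin maximum principle, of which Proposition \ref{pont} is just the concrete specialization. The heavy machinery is proved in \cite{PBGM,trelat2008controle}; what remains is to check that our data fit the hypotheses and to compute $\partial_z H$ explicitly.

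First I would rewrite the minimization problem. From the definition of $I_T$ via $\mathcal{A}_d(\phi)$, any admissible trajectory $\phi$ carries a non-negative vector-valued control $\mu$ with $\dot\phi_t=B\mu_t$, and $I_T(\phi|\mu)=\sum_{j=1}^k\int_0^T f(\mu^j_t,\beta_j(\phi_t))dt$. Setting $u=\mu$ and $z=\phi$, the problem of computing $V_{\widetilde{\partial O}}$ is therefore exactly
\[\inf\Bigl\{C(u):\ T>0,\ u\in L^1([0,T];\R^k_+),\ \dot z_t=Bu_t,\ z_0=z^\ast,\ z_T\in M\Bigr\},\]
which is the standard control problem stated just above Proposition \ref{pont}. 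I would first argue existence of an optimal pair $(\hat T,\hat u)$: the good-rate-function property of $I_T$ gives lower semicontinuity, and an argument analogous to the one used in Lemma \ref{lemequiva} (combining the Lipschitz bound on the $\beta_j$ with the lower bound on $\int_0^T\mu^j$ forced by $z_T\neq z_0$) yields compactness of near-minimizers; if existence should fail one argues on a minimizing sequence and applies the PMP to each element before passing to the limit.

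Next I would invoke the PMP for Bolza problems with free terminal time and terminal constraint of manifold type. It delivers an absolutely continuous adjoint $r:[0,\hat T]\to\R^d$ satisfying the Hamiltonian system $\dot z=\partial_r H$, $\dot r=-\partial_z H$, the transversality condition $r_{\hat T}\perp T_{z_{\hat T}}M$, the pointwise maximization $H(z_t,r_t,\hat u_t)=\sup_v H(z_t,r_t,v)$, and (because the final time is free and $H$ is autonomous) the identity $H\equiv 0$ along the optimal pair. The state equation $\dot z_t=B\hat u_t$ is by construction.

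The concrete form of the adjoint equation is then a direct computation. Since $\partial_\omega f(\nu,\omega)=1-\nu/\omega$, the chain rule gives
\[\partial_z f\bigl(\hat u^j_t,\beta_j(z_t)\bigr)=\Bigl(1-\frac{\hat u^j_t}{\beta_j(z_t)}\Bigr)\nabla\beta_j(z_t),\]
and therefore
\[\dot r_t=-\partial_z H(z_t,r_t,\hat u_t)=\sum_{j=1}^k\Bigl[\nabla\beta_j(z_t)-\hat u^j_t\,\frac{\nabla\beta_j(z_t)}{\beta_j(z_t)}\Bigr],\]
which is the stated adjoint equation.

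The main obstacle I anticipate is the degeneracy of the running cost at the boundary of $\R^k_+$ and where $\beta_j(z)=0$: the partial derivative $\partial_\omega f$ blows up as $\omega\downarrow 0$, so the classical PMP does not apply verbatim. The cleanest way around this is Assumption \ref{assump2}.\ref{assump25}, which keeps $\beta_j$ bounded below away from $\partial O$ and allows one to localize the problem to an enlarged open set where $f$ is smooth; one then checks that an optimal trajectory stays in this set (if it approached $\partial O$ the cost would blow up) and concludes by a standard approximation argument, exactly as in \cite{trelat2008controle}.
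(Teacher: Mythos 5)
Your approach matches the paper's: Proposition \ref{pont} is stated there as a direct specialization of the classical Pontryaguin maximum principle with citations to \cite{PBGM,trelat2008controle} and no proof, only a one-line remark explaining why $H\equiv 0$ (free terminal time, no terminal cost, autonomous Hamiltonian). Your identification of $V_{\widetilde{\partial O}}$ with the Bolza value function, your invocation of the PMP with manifold-type terminal constraint, and your computation of $\dot r = -\partial_z H$ via $\partial_\omega f(\nu,\omega)=1-\nu/\omega$ are all correct and are exactly what the paper is implicitly relying on.

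The one place your write-up goes wrong is the last paragraph on the degeneracy of the running cost. You propose to localize to a region where the $\beta_j$ are bounded below and to argue that an optimal trajectory ``stays in this set (if it approached $\partial O$ the cost would blow up).'' That cannot work here: the optimal trajectory \emph{must} terminate at $z_{\hat T}\in M\subset\widetilde{\partial O}$, and in the examples some rates vanish precisely there (e.g.\ $\beta_1=\lambda xy$ and $\beta_2=\gamma x$ on $\{x=0\}$ in the $SIRS$ model). Moreover the cost does not blow up as $\beta_j\to 0$: along the candidate optimal control $\hat u_j=e^{(B^\ast r)_j}\beta_j(z)$ one has $f(\hat u_j,\beta_j)=\big(1-e^{(B^\ast r)_j}+(B^\ast r)_je^{(B^\ast r)_j}\big)\beta_j\to 0$. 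So the degeneracy of $f$ at $\omega=0$ is a genuine technical point that your localization argument does not resolve; it would need either an approximation of the terminal manifold $M$ by interior level sets together with a passage to the limit, or a version of the PMP accommodating a state-dependent control constraint/cost that becomes singular on part of the boundary. The paper itself does not address this and leaves it to the cited references.
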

Note that $r_{\hat{T}}\perp M$ means that $r_{\hat{T}}$ and the tangent vector to $M$ at $z_{\hat{T}}$ are perpendicular.
 The fact that the Hamiltonian is zero at the final time is a consequence of two facts:
the final time $T$ is not fixed and there is no final cost; the fact that the Hamiltonian is zero along the optimal trajectory then follows from the fact that it is constant, since none of the coefficient depends upon the variable $t$. 

Let us exploit Proposition \ref{pont} to rewrite the system of ODEs for $z_t$ and $r_t$ along an optimal trajectory.
We denote by $B^\ast$ the transposed of the matrix $B$.
Since for each $1\le j\le k$, $u\to (B^\ast r)_ju-f(u,\beta_j(z))$ is concave, its maximum is achieved at the zero of its derivative, if it is non negative. We conclude that $\hat{u}_j=e^{(B^\ast r)_j}\beta_j(z)$. Consequently, along the optimal trajectory,
\begin{equation}\label{fbode}
\dot{z}_t=\sum_{j=1}^k e^{(B^\ast r_t)_j}\beta_j(z_t)h_j,\quad \dot{r}_t=\sum_{j=1}^k\left(1-e^{(B^\ast r_t)_j}\right)\nabla\beta_j(z_t).\end{equation}
Moreover, the optimal cost reads
\[ \hat{C}=\sum_{j=1}^k\int_0^{\hat{T}}\left(1-e^{(B^\ast r_t)_j}+(B^\ast r_t)_je^{(B^\ast r_t)_j}\right)\beta_j(z_t)dt.\]

In the examples below, $d=2$. We shall denote by $x_t$ and $y_t$ the two components of $z_t$, and by $p_t$ and $q_t$ the two components of $r_t$.

\subsection{The $SIRS$ model}
Let $x_t$ denote the proportion of infectious individuals in the population and $y_t$ the proportion of susceptibles. Since the total population size is constant, $1-x_t-y_t$ is the proportion of removed (also called recovered) individuals, who lose their immunity and become susceptible again at rate $\rho$. 
The deterministic SIRS model, see e.g. \cite{BP}, can be written as
\begin{align*}
\dot{x}_t&=\lambda x_t y_t-\gamma x_t,\\
\dot{y}_t&=-\lambda x_t y_t+\rho(1-x_t-y_t).
\end{align*}
If the basic reproduction number $R_0=\lambda/\gamma>1$, then there is a unique stable endemic equilibrium
$z^\ast=\left(\frac{\rho}{\lambda}\frac{\lambda-\gamma}{\rho+\gamma},\frac{\gamma}{\lambda}\right)$. The disease free equilibrium is $\bar z=(0,1)$. Here $\widetilde{\partial O}=\{x=0\}$.

The corresponding SDE is of the form \eqref{EqPoisson1}, with $d=2$, $k=3$,
$h_1=\begin{pmatrix}1\\ -1\end{pmatrix}$, $h_2=\begin{pmatrix}-1\\ 0\end{pmatrix}$, $h_3=\begin{pmatrix}0\\ 1\end{pmatrix}$,
$\beta_1(x,y)=\lambda xy$, $\beta_2(x,y)=\gamma x$, $\beta_3(x,y)=\rho(1-x-y)$. 

Note that in this example and in the next one, we do not need the definition of the reflected process $\tilde{Z}^N_t$, since it is identical to $Z^N_t$.

The system of ODES for the state and adjoint state \eqref{fbode} reads in this case
\begin{align*}
\dot{x}_t&=\lambda e^{p_t-q_t} x_ty_t-\gamma e^{-p_t}x_t,\\
\dot{y}_t&=-\lambda e^{p_t-q_t} x_ty_t+\rho e^{q_t}(1-x_t-y_t),\\
\dot{p}_t&=\lambda (1-e^{p_t-q_t}) y_t+\gamma(1-e^{-p_t})-\rho(1-e^{q_t}),\\
\dot{q}_t&=\lambda (1-e^{p_t-q_t}) x_t-\rho(1-e^{q_t}),\\
x_0&=\frac{\rho}{\lambda}\frac{\lambda-\gamma}{\rho+\gamma},\ y_0=\frac{\gamma}{\lambda},\ x_T=0,\ q_T=0.
\end{align*}

\subsection{The $SIR$ model with demography}
The deterministic SIR model with demography, see e.g. \cite{BP}, can be written as
\begin{align*}
\dot{x}_t&=\lambda x_t y_t-(\gamma+\mu) x_t,\\
\dot{y}_t&=-\lambda x_t y_t+\mu-\mu y_t.
\end{align*}
Again $x_t$ (resp. $y_t$) denotes the proportion of infectious (resp. susceptible) individuals in the population.
As opposed to the SIRS model, the removed individuals do not loose their immunity, rather new susceptibles are born at rate
$\mu$, which is the rate at which both susceptibles and infectious die (the infectious heal at rate $\rho$ as in the $SIRS$ model). 
If $R_0=\frac{\lambda}{\gamma+\mu}>1$, there is a stable endemic equilibrium $z^\ast=\left(
\frac{\mu}{\gamma+\mu}-\frac{\mu}{\lambda},\frac{\gamma+\mu}{\lambda} \right)$, and a disease free equilibrium $\bar z=(0,1)$. Here $\widetilde{\partial O}=\{x=0\}$.

The corresponding SDE is of the form \eqref{EqPoisson1}, with $d=2$, $k=4$, $h_1=\begin{pmatrix}1\\ -1\end{pmatrix}$, $h_2=\begin{pmatrix}-1\\ 0\end{pmatrix}$, $h_3=\begin{pmatrix}0\\ 1\end{pmatrix}$, $h_4=\begin{pmatrix}0\\-1\end{pmatrix}$,
$\beta_1(x,y)=\lambda xy$, $\beta_2(x,y)=(\gamma+\mu) x$, $\beta_3(x,y)=\mu$, $\beta_4(x,y)=\mu y$.
One difficulty in this model is that the ``proportions'' here are not true proportions, they can be greater than 1. In fact the random process lives in all of $\R^2_+$. However one can show, see \cite{BP} that the cost needed to hit the boundary $\{x+y=R\}$ tends to $\infty$ as $R\to\infty$, hence for $R$ large enough,  if we restrict ourself to the subset $A_R=\{(x,y)\in\R^2_+,\ x+y\le R\}$,
since
$\min_{z\in\partial A_R}V(z^\ast,z)=\min_{z_1=0}V(z^\ast,z)$. Also $A_R$ is not exactly $A_1$ which has been considered so far, it is easily seen that all our results extend to this new situation.
The system of ODES for the state and adjoint state \eqref{fbode} reads in this case
\begin{align*}
\dot{x}_t&=\lambda e^{p_t-q_t} x_ty_t-(\gamma+\mu) e^{-p_t}x_t,\\
\dot{y}_t&=-\lambda e^{p_t-q_t} x_ty_t+\mu e^{q_t}-\mu e^{-q_t}y_t,\\
\dot{p}_t&=\lambda (1-e^{p_t-q_t}) y_t+(\gamma+\mu)(1-e^{-p_t}),\\
\dot{q}_t&=\lambda (1-e^{p_t-q_t}) x_t+\mu(1-e^{-q_t}),\\
x_0&=\frac{\mu}{\gamma+\mu}-\frac{\mu}{\lambda},\ y_0=\frac{\gamma+\mu}{\lambda},\ x_T=0,\ q_T=0.
\end{align*}
\subsection{The $SIV$ model}
In this model, some of the individuals are vaccinated. Also the vaccine is not perfect, it gives a partial protection. If we denote by 
$x_t$ (resp. $y_t$) the proportion of infectious  (resp. of vaccinated) individuals at time $t$, the model studied by \cite{kribs2000simple} reads
\begin{align*}
\dot{x}_t&=(\beta -\mu-\gamma)x_t -\beta(1-\chi)x_ty_t-\beta x_t^2,\\
\dot{y}_t&=\eta(1-x_t)-(\eta+\mu+\theta)y_t-\chi\beta x_ty_t.
\end{align*}
For certain values of the parameters, it is shown that this model have one disease--free equilibrium, one locally stable endemic equilibrium $z^\ast$, and a third equilibrium $\bar z$ which lies on the characteristic boundary which separates the basins of attraction of the two other equilibria, which is here $\widetilde{\partial O}$. 
The corresponding SDE is of the form \eqref{EqPoisson1}, with $d=2$, $k=7$, $h_1= \begin{pmatrix}1\\ 0\end{pmatrix}$,
$h_2=\begin{pmatrix}1\\ -1\end{pmatrix}$, $h_3=\begin{pmatrix}-1\\ 0\end{pmatrix}$, $h_4=\begin{pmatrix}0\\ -1\end{pmatrix}$, $h_5=\begin{pmatrix}0\\ 1\end{pmatrix}$, $h_6=\begin{pmatrix}-1\\ 0\end{pmatrix}$ and $h_7=\begin{pmatrix}0\\ -1\end{pmatrix}$, $\beta_1(x,y)=\beta x(1-x-y)$, $\beta_2(x,y)=\chi\beta xy$, $\beta_3(x,y)=\gamma x$, $\beta_4(x,y)=\theta y$, 
$\beta_5(x,y)=\eta(1-x-y)$, $\beta_6(x,y)=\mu x$, $\beta_7(x,y)=\mu y$.

The system of ODES for the state and adjoint state \eqref{fbode} reads in this case
\begin{align*}
\dot{x}_t&=\beta e^{p_t} x_t(1-x_t-y_t)+\chi\beta e^{p_t-q_t} x_ty_t-(\gamma+\mu) e^{-p_t}x_t,\\
\dot{y}_t&=-\chi\beta e^{p_t-q_t} x_ty_t-(\theta+\mu) e^{-q_t}y_t+\eta e^{q_t}(1-x_t-y_t),\\
\dot{p}_t&=\beta (1-e^{p_t})(1-2x_t- y_t)+\chi\beta(1-e^{p_t-q_t})y_t+\gamma(1- e^{-p_t})-\eta(1-e^{q_t})+\mu(1-e^{-p_t}),\\
\dot{q}_t&=-\beta(1-e^{p_t})x_t+\chi\beta (1-e^{p_t-q_t}) x_t+\theta(1-e^{-q_t})+\eta(e^{q_t}-1)+\mu(1-e^{-q_t}),\\
x_0&=x^\ast,\ y_0=y^\ast,\ (x_T,y_T)\in M,\ (p_T,q_T)\perp M.
\end{align*}

\subsection{The $S_0IS_1$ model}
 This is a version of the $SIR$ model, where the recovered individuals are susceptible, but with a susceptibility which is less that that of those who have never been infected. They are of type $S_1$. This model has been studied in \cite{safan2006minimum}. Let $x_t$ (resp. $y_t$) denote the proportion of infectious  (resp. of type $S_1$) individuals. The ODE reads
 \begin{align*}
\dot{x}_t&=\beta(1-x_t -y_t)x_t -(\mu+\alpha) x_t+r\beta x_ty_t,\\
\dot{y}_t&=\alpha  x_t-\mu y_t-r\beta x_ty_t.
\end{align*}
Again for certain values of the parameters, we have the same large time description as for the $SIV$ model, and $\widetilde{\partial O}$ is the characteristic boundary which separates the basins of attraction of the two local stable equilibria.
The corresponding SDE is of the form \eqref{EqPoisson1}, with $d=2$, $k=5$, $h_1= \begin{pmatrix}1\\ 0\end{pmatrix}$,
$h_2=\begin{pmatrix}-1\\ 1\end{pmatrix}$, $h_3=\begin{pmatrix}-1\\ 0\end{pmatrix}$, $h_4=\begin{pmatrix}1\\ -1\end{pmatrix}$ and $h_5=\begin{pmatrix}0\\ -1\end{pmatrix}$, $\beta_1(x,y)=\beta x(1-x-y)$, $\beta_2(x,y)=\alpha x$, $\beta_3(x,y)=\mu x$, $\beta_4(x,y)=r\beta xy$ and $\beta_5(x,y)=\mu y$.

The system of ODES for the state and adjoint state \eqref{fbode} reads in this case
\begin{align*}
\dot{x}_t&=\beta e^{p_t} x_t(1-x_t-y_t)-\alpha e^{q_t-p_t} x_t-\mu e^{-p_t}x_t+r\beta e^{p_t-q_t}x_ty_t,\\
\dot{y}_t&=\alpha e^{q_t-p_t} x_t-r\beta e^{p_t-q_t}x_ty_t-\mu e^{-q_t}y_t,\\
\dot{p}_t&=\beta (1-e^{p_t})(1-2x_t- y_t)+\alpha(1-e^{q_t-p_t})+\mu(1- e^{-p_t})+r\beta(1-e^{p_t-q_t})y_t,\\
\dot{q}_t&=-\beta(1-e^{p_t})x_t+r\beta(1-e^{p_t-q_t})x_t+\mu(1-e^{-q_t}),\\
x_0&=x^\ast,\ y_0=y^\ast,\ (x_T,y_T)\in M,\ (p_T,q_T)\perp M.
\end{align*}

\subsection{The result}
In the four above examples, extinction happens when $z_t$ hits the boundary to which $\bar z$ belongs (which is what is denoted $\widetilde{\partial O}$ in the previous sections of this paper). This is quite clear in the first two examples. In the last two, as soon as the process crosses that boundary, it converges very quickly (in zero time in the scale of Large Deviations) to the disease free equilibrium. When on the boundary $\widetilde{\partial O}$, the solution of the ODE converges to $\bar z$. It clearly follows from that remark that
$V_{\widetilde{\partial O}}=V_{\overline{O}}(z^\ast,\bar z)$. What we want to show is that this minimum is unique, i.e. 
\begin{proposition}\label{pro:minV}
In each of the above four examples, for all $z\in\widetilde{\partial O}\backslash\{\bar z\}$, $V_{\overline{O}}(z^\ast,\bar z)<V_{\overline{O}}(z^\ast, z)$. 
\end{proposition}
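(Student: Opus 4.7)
My plan is to combine a simple concatenation argument with Pontryagin's maximum principle (Proposition~\ref{pont}). First I would (re)establish the weak inequality $V_{\bar O}(z^\ast,\bar z)\le V_{\bar O}(z^\ast,z)$ for every $z\in\widetilde{\partial O}$. In each of the four models, condition \eqref{condboundcarac} makes $\widetilde{\partial O}$ invariant under the deterministic flow, and (as noted at the opening of this subsection) this flow on $\widetilde{\partial O}$ converges to $\bar z$. Concatenating any $\eta/2$-optimal path from $z^\ast$ to $z$ with a sufficiently long segment of the deterministic flow on $\widetilde{\partial O}$, whose contribution to $I_T$ is zero, yields a path from $z^\ast$ into any preassigned neighborhood of $\bar z$ at essentially the same cost; the goodness of the rate function then gives the weak inequality and also reproves $V_{\widetilde{\partial O}}=V_{\bar O}(z^\ast,\bar z)$.

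To establish strict inequality I would argue by contradiction using Pontryagin. Suppose $V_{\bar O}(z^\ast,z_1)=V_{\bar O}(z^\ast,\bar z)$ for some $z_1\in\widetilde{\partial O}\setminus\{\bar z\}$, and let $(z_t,r_t,\hat u_t)$, $0\le t\le T$, be an extremal triple associated with a minimizing path from $z^\ast$ to $z_1$, satisfying \eqref{fbode} with $r_T\perp T_{z_1}\widetilde{\partial O}$ and $H(z_T,r_T)=0$. The key structural fact is that in each of the four models $\widetilde{\partial O}$ is invariant under the extremal flow \eqref{fbode} itself: for the $SIRS$ and $SIR$ models this follows from $\widetilde{\partial O}=\{x=0\}$ and the factorization $\dot x_t = x_t\,\psi(z_t,r_t)$ visible in the explicit ODE systems; for the $SIV$ and $S_0IS_1$ models it follows from $\widetilde{\partial O}$ being the stable manifold of $\bar z$ under the deterministic dynamics combined with an analogous factorization. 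Consequently $z_T\in\widetilde{\partial O}$ can be reached only as $t\to\infty$, and on the invariant submanifold of \eqref{fbode} sitting above $\widetilde{\partial O}$ the transversality condition together with $H=0$ cuts the phase space down to a lower-dimensional autonomous Hamiltonian flow whose only admissible $\omega$-limit is $\bar z$; this forces $z_1=\bar z$, contradicting the choice of $z_1$. For $SIRS$ and $SIR$ the reduction is explicit: transversality gives $q_T=0$, the set $\{x=0,\,q=0\}$ is invariant under \eqref{fbode}, and the reduced $y$-equation is the scalar deterministic ODE $\dot y_t=\rho(1-y_t)$ (respectively $\mu(1-y_t)$) whose only attractor in $[0,1]$ is $y=1$.

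The hard part will be carrying out this reduction for the $SIV$ and $S_0IS_1$ models. Unlike the coordinate-axis cases, $\widetilde{\partial O}$ has no explicit algebraic description, the transversality condition couples the two components of $r_T$ in a nontrivial way, and the reduced flow on $\widetilde{\partial O}$ does not decouple. The cleanest approach I see is to linearize \eqref{fbode} around the lift of $\bar z$ to the cotangent bundle, identify $\bar z$ as the unique hyperbolic equilibrium of the extremal flow lying above $\widetilde{\partial O}$ and compatible with transversality, and apply the stable manifold theorem to rule out other $\omega$-limits. A secondary technical point is the justification of an infinite-horizon version of Proposition~\ref{pont}: since the infimum defining $V_{\bar O}(z^\ast,z)$ is not attained at a finite time when $z$ lies on a characteristic boundary, one has to work with a minimizing sequence of finite-horizon problems and pass the Pontryagin conditions to the limit, which is routine but requires care.
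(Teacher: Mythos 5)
For the finite-horizon case you argue via invariance of $\widetilde{\partial O}$ (or a lift of it) under the extremal flow \eqref{fbode}. For $SIRS$ and $SIR$ this works because $\widetilde{\partial O}=\{x=0\}$ and the state equation factorizes as $\dot x_t = x_t\,\psi(z_t,r_t)$, so $\{x=0\}$ is genuinely invariant for the four--dimensional system. But for $SIV$ and $S_0IS_1$, $\widetilde{\partial O}$ is the stable manifold of the saddle $\bar z$ for the \emph{deterministic} flow $\dot z=b(z)$, and there is no reason for it, nor for any natural lift of it to $(z,r)$--space, to be invariant under the extremal flow $\dot z=\sum_j e^{(B^\ast r)_j}\beta_j(z)h_j$, which is a genuinely different vector field whenever $r\neq0$; the ``analogous factorization'' you invoke does not exist, and this is where your argument for those two models breaks down. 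The paper sidesteps invariance entirely: if a finite--time optimal trajectory ended at some $z\neq\bar z$ at time $\hat T$, one could concatenate it with the zero--cost deterministic flow from $z$ to $\bar z$, obtaining an optimal trajectory to $\bar z$ whose adjoint is identically zero on the second piece (because $\hat u_j=\beta_j(z_t)$ forces $(B^\ast r)_j=0$ for all $j$, and the $h_j$ span $\R^d$). Continuity of the adjoint gives $r_{\hat T}=0$, and backward uniqueness for the adjoint ODE (whose coefficients depend only on the bounded path $z_t$) forces $r\equiv0$ on $[0,\hat T]$, hence $z_t\equiv z^\ast$, a contradiction. This rules out finite--time hitting of $\widetilde{\partial O}\setminus\{\bar z\}$ uniformly in all four models, with no invariance assumption.

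For the infinite--horizon case you propose to linearize \eqref{fbode} around the lift of $\bar z$ and apply the stable manifold theorem; besides not being carried out, this misses the key idea of the paper. Once $(z_t,r_t)$ converges to a fixed point $(z_\infty,r_\infty)$ of \eqref{fbode}, the decisive observation is that the optimal cost $\hat C$ is finite, so the nonnegative running cost $\sum_j\bigl(1-e^{(B^\ast r_t)_j}+(B^\ast r_t)_je^{(B^\ast r_t)_j}\bigr)\beta_j(z_t)$ must tend to $0$; hence every index $j$ with $\beta_j(z_\infty)>0$ forces $\langle r_\infty,h_j\rangle=0$. After checking $x_\infty,y_\infty>0$ (a short, model--specific argument using where the fixed points of $b$ lie), taking $j=3,4$ for $SIV$ and $j=3,5$ for $S_0IS_1$ gives $r_\infty=0$ directly, so $z_\infty$ is a zero of $b$ on $M$, i.e.\ $\bar z$; for $SIRS$ and $SIR$, $x_\infty=0$ together with the transversality $q_\infty=0$ already gives $y_\infty=1$. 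Finiteness of the optimal cost, not a stable--manifold reduction, is the missing ingredient in your sketch, and it replaces the delicate dynamical analysis you anticipate with a one--line algebraic argument.
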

This shows, thanks to
Corollary \ref{corunicite}, that for large $N$, $Z^{N,z}$ will exit the domain of attraction of the endemic equilibrium $z^\ast$ in the vicinity of $\bar z$ with probability almost $1$.

We now turn to the

\noindent{\it Proof of Proposition \ref{pro:minV}}
Suppose first that there exists a minimizing sequence $\{u_n,\ n\ge1\}\subset L^1(\R_+;\R^k_+)$ such that the corresponding trajectory $z_t$ hits the target $M$ at some point $z_n\in\widetilde{\partial O}\backslash\{\bar z\}$ in time $T_n$, with $T:=\sup_n T_n<\infty$. 
Since $I_T$ is a good rate function (i.e. its level sets are compact), see Theorem 3 in \cite{Sam2017}, there exists a subsequence $z_{n_k}$ which converges to a  optimal trajectory $\hat{z}$ which hits $M$ at point 
$z\not=\bar z$ at time $\hat{T}\le T$, with a control 
$\hat{u}\in L^1([0,T];\R^k_+)$. We concatenate $\hat{z}$ with the solution of the ODE starting from $z$, which converges to $\bar z$ (in infinite time). Since the second part of the trajectory runs at no cost, the whole trajectory is optimal for the same control problem as above, but with the constraint that $\bar z$ must be the final point. We apply the Pontryagin maximum principle to this new optimal control problem, which implies the existence of a continuous adjoint state $(p_t,q_t)$. Since $p_t=q_t=0$ for $t>\hat{T}$, we have $p_{\hat{T}}=q_{\hat{T}}=0$. But this is not possible. $z_t$ being bounded, the solution $(p_t,q_t)$ of the adjoint state equation cannot hit $(0,0)$ in finite time. One way to see this is to note that the function $(p_t,q_t)$ time reversed from time $\hat{T}$ would solve an ODE starting from $(0,0)$, whose unique solution is $(p_t,q_t)\equiv(0,0)$, see the second equation in \eqref{fbode}. 
We conclude from the above argument  that if an optimal trajectory converges to some point $z\not=\bar z$, then it does so in infinite time. 
Consequently $(x_\infty,y_\infty,p_\infty,q_\infty)$ must be a fixed point of \eqref{fbode}. It remains to show that 
$(\bar x,\bar y,0,0)$ is the only admissible fixed point. The argument is now slightly different in the various 
considered examples.

In the first two examples ($SIRS$ and $SIR$ with demography), we know that $x_\infty$ (the first coordinate of $z_\infty$) and the second coordinate $q_\infty$ of $r_\infty$ vanish. Consequently 
$y_\infty$ must be the zero of $1-y$, hence equals $1$.

In the two other cases,  we first note that both $p_\infty$ and $q_\infty$ must be finite. Indeed, either 
the solution must remain bounded, or else would explode in finite time, which contradicts the existence of the adjoint state on $[0,+\infty)$. We next show that both coordinates of $z_\infty$, $x_\infty$ and $y_\infty$, are positive. In case of the $SIV$ model, we first note that $y_\infty=0$ would imply $x_\infty =1$, but $(1,0)$ is clearly not on $M$. On the other hand, 
$x_\infty=0$ would imply that the second coordinate $q_\infty$ of $r_\infty$ vanishes, and $y_\infty=\frac{\eta}{\eta+\mu+\theta}$, which again gives a point not on $M$. In case of the $S_0IS_1$ model, we note that $x_\infty=0$ implies $y_\infty=0$, and the reserve implication is also true, but $(0,0)$ is not on $M$. Finally, since $\hat{C}<\infty$, the running cost must converge to $0$ as $t\to\infty$. For each $1\le j\le k$ such that $\beta_j(z_\infty)>0$, this implies that $\langle r_t,h_j\rangle\to0$ as $t\to\infty$. 
This is true for $j=3$ and $4$ in case of the $SIV$ model, for $j=3$ and $5$ in case of the $S_0IS_1$ model. In both cases, it implies that $(p_\infty,q_\infty)=(0,0)$. Consequently $z_\infty$ is a zero of 
$b(z)=\sum_{j=1}^k \beta_j(z)h_j$ and belongs to $M$, hence equals $\bar z$. The proof is complete.
\hfill $\square$


\frenchspacing
\bibliographystyle{plain}

\end{document}